\newcommand{\N}{\mathbb{N}}
\newcommand{\Z}{\mathbb{Z}}
\newcommand{\R}{\mathbb{R}}
\newcommand{\A}{\mathbb{A}}
\newcommand{\Ac}{\overline{\mathbb{A}}}
\newcommand{\s}{\mathbb{S}}
\newcommand{\D}{\mathbb{D}}
\newcommand{\tore}{\mathbb{T}}
\newcommand{\T}[1]{\tore^{#1}}
\newcommand{\abs}[1]{\left|#1\right|}
\newcommand{\fonc}[3]{#1:#2\to#3}
\DeclareMathOperator{\rot}{Rot}
\DeclareMathOperator{\diam}{diam}
\DeclareMathOperator{\dom}{dom}
\DeclareMathOperator{\sing}{Sing}
\DeclareMathOperator{\fix}{Fix}
\DeclareMathOperator{\supp}{Supp}
\theoremstyle{theorem}
\newtheorem{lemma}{Lemma}[section]
\newtheorem{coro}[lemma]{Corollary}
\newtheorem{prop}[lemma]{Proposition}
\newtheorem{theo}[lemma]{Theorem}
\newtheorem*{theoa}{Theorem A}
\newtheorem*{theob}{Theorem B}
\newtheorem*{theoc}{Theorem C}
\newtheorem*{theod}{Theorem D}
\theoremstyle{remark}
\newtheorem{rema}{Remark}
\newtheorem*{claim}{Claim}
\title{Applications of Forcing Theory to  Homeomorphisms of the Closed Annulus}%Alterative Title - Extremal measures, bounded deviations and realization of rotation vectors for Homeomorphisms of the closed annulus\\ Boyland's Conjecture, uniformly bounded deviations, improve Handel's Theorem}
\author{Conejeros, Jonathan and Tal, F\'abio Armando}
\date{\today}
\begin{document}

\maketitle%%%%%%%%%%%%%%%%
%\tableofcontents

\begin{abstract}
 This paper studies homeomorphisms of the closed annulus that are isotopic to the identity from the viewpoint of rotation theory, using a newly developed forcing theory for surface homeomorphisms. Our first result is a solution to the so called strong form of Boyland's Conjecture on the closed annulus: Assume $f$ is a homeomorphism of $\Ac:=(\R/\Z)\times [0,1]$ which is isotopic to the identity and preserves a Borel probability measure $\mu$ with full support. We prove that if the rotation set of $f$ is a non-trivial segment, then the rotation number of the measure $\mu$ cannot be an endpoint of this segment. We also study the case of homeomorphisms such that $\A=(\R/\Z)\times (0,1)$ is a region of instability of $f$. We show that, if the rotation numbers of the restriction of $f$ to the boundary components lies in the interior of the rotation set of $f$, then $f$ has uniformly bounded deviations from its rotation set. Finally, by combining this last result and recent work on realization of rotation vectors for annular continua, we obtain that if $f$ is any area-preserving homeomorphism of $\Ac$ isotopic to the identity, then  for every real number $\rho$ in the rotation set of $f$, there exists an associated Aubry-Mather set, that is, a compact $f$-invariant set such that every point in this set has a rotation number equal to $\rho$. This extends a result by P. Le Calvez previously known only for diffeomorphisms.
%$\mathcal{C}^0$ setting.
\end{abstract}

\section{Introduction}

This article studies homeomorphisms of the closed annulus that preserve the orientation and the boundary components, by the point of view of rotation theory. We denote by $\T{1}=\R/\Z$ the circle, by $\Ac=\T{1}\times [0,1]$ the closed annulus and by $\widehat{\Ac}=\R\times [0,1]$ its universal covering. Let $\widehat{\pi}:\widehat{\Ac}\to\Ac$ the corresponding covering map, and $p_1:\widehat{\Ac}\to\R$ the projection on the first coordinate. Let $f:\Ac\to\Ac$ be a homeomorphism which preserves both orientation and boundary components and let $\widehat{f}$ be a lift of $f$ to the universal covering. Inspired by the concept of Poincar\'e's rotation number for orientation-preserving homeomorphisms of the circle, one can define a similar object for $\widehat{f}$, called the {\em rotation set of $\widehat{f}$}, as follows: let $\mu$ be an $f$-invariant Borel probability measure on $\Ac$. We can define the {\em rotation number of $\mu$ for $\widehat{f}$} as
$$  \rot(\widehat{f},\mu):= \int_{\Ac} p_1(\widehat{f}(\widehat{z}))-p_1(\widehat{z})\,d\mu(z),  $$
where $\widehat{z}\in \widehat{\pi}^{-1}(z)$. Note that this definition does not depend on the choice of $\widehat{z}\in \widehat{\pi}^{-1}(z)$. The  {\em rotation set of  $\widehat{f}$ }, denoted by $\rot(\widehat{f})$, is the set of all rotation numbers of $f$-invariant Borel probability measures. Since the set of $f$-invariant Borel probability measures is convex and compact in the weak-$\ast$topology, one shows that the rotation set of $\widehat{f}$ is a non-empty compact interval of $\R$.

We remark that the concept of rotation sets is not restricted to homeomorphisms of the annulus, and has been useful  in the general study of homeomorphisms in the isotopy class of the identity of surfaces in general, and particularly for the two dimensional torus. One of the reasons for the growing interesse in the subject is the variety of dynamical properties and phenomena that can be deduced from rotation sets; it is a useful tool in, for instance, estimating the topological entropy of a map or determining the existence of periodic points with arbitrarily large prime periods and distinct rotational behavior.

One of the driving problems in the understanding of the rotation theory for homeomorphisms of the closed annulus and of the two dimensional torus $\T{2}=\R^2/\Z^2$  has been the Boyland's Conjecture, see for instance \cite{transitiveannulusI,transitivetorus}. In the original form, Boyland's Conjecture for the closed annulus claimed that, whenever $f:\Ac\to\Ac$ preserved the Lebesgue measure and had a lift $\widehat f$ such that the rotation number of the Lebesgue measure for $\widehat f$ was null, then either the rotation set of $\widehat{f}$ was a singleton, or $0$ lied in the interior of the rotation set of $\widehat{f}$. A stronger version of this conjecture has also been proposed, saying that whenever $f$ preserved the Lebesgue measure and the rotation set of $\widehat f$ was a nondegenerate interval, then the rotation number of the Lebesgue measure for $\widehat f$ always lies in the interior of the rotation set, and similar questions were posed for homeomorphisms of $\T{2}$. In \cite{salvadorplms} the strong form of Boyland's Conjecture for $\T{2}$ was shown to hold for $\mathcal{C}^{1+\epsilon}$-diffeomorphisms, a result later extended for the $\mathcal{C}^0$ case in \cite{lct}. The later paper also proved the original conjecture for the closed annulus, but the strong version remained untenable. Our first result of this paper is the solution to this problem.

\begin{theoa}
  Let $f$ be a homeomorphism of the closed annulus $\Ac:=\T{1}\times [0,1]$ which is isotopic to the identity and preserves a Borel probability measure $\mu$ with full support. Let $\widehat{f}$ be a lift of $f$ to $\R\times [0,1]$. Suppose that $\rot(\widehat{f})$ is a non-trivial segment. Then the rotation number of $\mu$ cannot be an endpoint of $\rot(\widehat{f})$.
\end{theoa}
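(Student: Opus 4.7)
The plan is to argue by contradiction using the forcing theory of Le Calvez--Tal for surface homeomorphisms. By replacing $f$ by $f^{-1}$ and adjusting the lift if necessary, we may assume that $\rot(\widehat f)=[\rho_-,\rho_+]$ with $\rho_-<\rho_+$ and that $\rot(\widehat f,\mu)=\rho_+$. Since $\rho_+=\max\rot(\widehat f)$, the ergodic decomposition $\mu=\int\theta\,dP(\theta)$ satisfies $\int\rot(\widehat f,\theta)\,dP(\theta)=\rho_+$ with $\rot(\widehat f,\theta)\le\rho_+$, forcing $P$-almost every ergodic component $\theta$ to have $\rot(\widehat f,\theta)=\rho_+$. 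Combined with $\supp(\mu)=\Ac$, this shows that the set $\Lambda_+$ of Birkhoff-generic points for such ergodic components is \emph{dense} in $\Ac$. On the other hand, non-degeneracy of $\rot(\widehat f)$ produces an ergodic invariant measure $\nu$ with $\rot(\widehat f,\nu)<\rho_+$ and a non-empty set $\Lambda_-$ of $\nu$-generic points.

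The next step is to fix a maximal identity isotopy $I$ from $\mathrm{id}$ to $f$ and to apply Le Calvez's theorem in order to obtain a singular oriented foliation $\mathcal F$ dynamically transverse to $I$. For each recurrent point $z$, the bi-infinite transverse trajectory $I^{\mathcal F}_{\Z}(z)$ lifts to a bi-infinite transverse path in $\widehat{\Ac}$ whose asymptotic horizontal speed (measured through $p_1$) equals the rotation number of the associated ergodic measure. Consequently, whiskers of $\Lambda_+$-points drift horizontally at speed $\rho_+$, while whiskers of $\Lambda_-$-points drift at a strictly smaller speed.

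The contradiction would then be extracted from the forcing lemma: given two admissible transverse paths admitting a topological $\mathcal F$-transverse intersection, one may concatenate suitable subarcs to produce a new admissible transverse path whose corresponding orbit of $\widehat f$ realizes an asymptotic horizontal displacement strictly larger than either of the original ones. Applied to a $\Lambda_+$-whisker and a $\Lambda_-$-whisker whose $\mathcal F$-transverse crossing has the correct orientation, this would produce an orbit of $\widehat f$ with asymptotic horizontal speed strictly exceeding $\rho_+$, contradicting the maximality of $\rho_+$ in $\rot(\widehat f)$. Density of $\Lambda_+$ in $\Ac$---precisely where the full-support hypothesis on $\mu$ intervenes---provides $\Lambda_+$-orbits arbitrarily close to $\Lambda_-$-orbits, which makes the existence of such a crossing geometrically natural.

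The principal obstacle is establishing the $\mathcal F$-transverse intersection itself. Closeness of whiskers does not automatically yield a topological crossing; one has to convert the difference of asymptotic horizontal speeds into a genuine $\mathcal F$-transverse intersection, exploiting the topology of the annulus and the behaviour of $\mathcal F$ at large horizontal scales in the lifts. A secondary difficulty arises when $\Lambda_-$ is supported only on $\partial\Ac$ (for instance when $\rho_-$ is realised solely on a boundary circle): in that case the crossing must be arranged between interior $\Lambda_+$-whiskers and boundary orbits, which requires a dedicated analysis of $\mathcal F$ near the boundary components of $\Ac$ and of the essential homotopy classes of concatenated whiskers. Overcoming this geometric obstruction is the heart of the argument; once the $\mathcal F$-transverse intersection is secured, the contradiction follows directly from the forcing machinery.
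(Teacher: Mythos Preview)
Your overall framework is correct and aligned with the paper's: argue by contradiction, use the full-support hypothesis to get a dense set of points with extremal rotation number, and apply Le Calvez--Tal forcing to manufacture a rotation number outside $\rot(\widehat f)$. But the proposal stops precisely where the real work begins, and it is also missing two concrete ingredients that the paper needs to carry the argument through.

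First, you acknowledge that producing the $\mathcal{F}$-transverse intersection is ``the heart of the argument'' but give no mechanism for it. The paper does not try to intersect two generic whiskers directly. Instead it uses Franks's theorem to find a \emph{periodic} point $z$ with $\widehat f(\widehat z)=\widehat z+(1,0)$ (after arranging $\alpha<0<1<\beta$), whose transverse trajectory is, up to a dichotomy, the natural lift of a simple essential loop $\Gamma$. The open annulus $U_\Gamma$ of leaves meeting $\Gamma$ is essential, and any trajectory of a point with rotation number $\alpha<0$ cannot remain in $U_\Gamma$; it must exit and re-enter on the same side, which by a structural lemma forces an $\mathcal{F}$-transverse intersection with (a translate of) the periodic trajectory. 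This pivot through a periodic point is what converts ``different asymptotic speeds'' into a genuine topological crossing---your proposal lacks any analogue.

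Second, having rotation number $\alpha$ at a dense set of points is not enough; the paper needs Atkinson's lemma to get, for $\mu$-almost every point, a sequence $(p_l,q_l)$ with $\widehat f^{q_l}(\widehat z)-\widehat z-(p_l,0)\to 0$ \emph{and} $p_l-\alpha q_l\to 0$. This sharp recurrence is what allows concatenating long pieces of an $\alpha$-trajectory with the periodic loop so that the resulting admissible path, of some order $q$, intersects a translate by $(p-k,0)$ with $(p-k)/q<\alpha$, invoking the quantitative forcing result (Theorem~M of \cite{newlct}) to place $(p-k)/q$ in the rotation set. Your claim that concatenation yields ``displacement strictly larger than either of the original ones'' is not how the forcing lemma works: Proposition~20 of \cite{lct} only tells you the concatenated path is admissible of the summed order, and extracting a rotation-number contradiction requires the careful bookkeeping with $p_l,q_l$ that Atkinson's lemma supplies.
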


Another research topic in rotation theory that has gathered substantial attention lately is the concept of bounded rotational deviations from rotation sets. It is a well known fact that, given an orientation-preserving homeomorphism $h:\T{1}\to\T{1}$ and a lift $\widehat{h}$ to the real line whose rotation number is $\alpha$, one has that every orbit of $\widehat{h}$ remains at a bounded distance from the orbit of the associated rigid rotation. That is, there exists some constant $L>0$ such that, for all $\widehat{x}\in\R$ and all $n\in\N$, $\vert \widehat{h}^{n}(\widehat{x})-\widehat{x}-n\alpha\vert\le L$ (and in this case $L$ can be taken as $1$). A natural question is then to ask if some aspects of this property extend to similar situations for homeomorphisms of surfaces. For instance, one could pose the problem: Consider a homeomorphism $f$ of $\T{2}$ in the isotopy class of the identity and say that {\it $f$ has uniformly bounded deviations from its rotation set} if, given $\widehat{f}$ a lift of $f$ to $\R^2$, the universal covering of $\T{2}$, there a constant $L>0$ such that, for all $\widehat{z}\in\R^2$ and all $n\in\N$, if $d$ is the distance between a point and a set of $\R^2$, then $d(\widehat{f}^{n}(\widehat{z})-\widehat{z}, n \rot(\widehat{f}))\le L$. On then asks if it always holds that $f$ has uniformly bounded deviations. This question is false in general, particularly when the rotation set of $\widehat{f}$ is a singleton (see for instance \cite{koropeckikocksard,koropeckitalpams}), but it does hold in many situations, particularly when $\rot(\widehat{f})$ has nonempty interior (see \cite{davalos1,davalos2,salvadorplms,gukotal,boundedunbounded,lct}), and similar results also are valid for homeomorphisms of $\T{2}$ isotopic to Dehn Twists (see \cite{addasgarciatal}). Furthermore, bounded deviations have also shown to have relevant dynamical consequences, for instance it was used in the proof of Boyland's Conjecture on $\T{2}$ in \cite{salvadorplms,lct}. In some particular cases it can also imply that the dynamics factors over ergodic rotations of $\T{2}$ (see \cite{jagerinventiones}) or $\T{1}$ (see \cite{jagertal}).

Our second theorem deals with bounded deviations from rotation sets for homeomorphisms of $\Ac$ in the following relevant scenario.  We will say that $\A=\T{1}\times (0,1)$ is a {\it Birkhoff region of instability} for a homeomorphism $f$ of $\Ac$ if for any neighborhood $U$ of $\T{1} \times \{0\}$ and any neighborhood $V$ of $\T{1} \times \{1\}$ one can find points $x\in U,\, y\in V$ and positive integers $n_1, n_2$ such that $f^{n_1}(x)\in V$ and $f^{n_2}(y)\in U$.
%there is a point that lies very close $\T{1} \times \{0\}$ and having a positive iterate very close $\T{1} \times \{1\}$ as well as one point that lies very close $\T{1} \times \{1\}$ and having a positive iterate very close $\T{1} \times \{0\}$.
\begin{theob}
    Let $f$ be a homeomorphism of the closed annulus $\Ac=\T{1}\times [0,1]$ which is isotopic to the identity. Suppose that $\A=\T{1}\times (0,1)$ is a Birkhoff region of instability for $f$.  Let $\widehat{f}$ be a lift of $f$ to $\R\times [0,1]$. Suppose that $\rot(\widehat{f})=[\alpha,\beta]$ and that both boundary component rotation numbers are strictly larger than $\alpha$. Then there exists a real constant $L>0$ such that for every $\widehat{z}\in \R\times [0,1]$ and every integer $n\geq 1$ we have
  $$ p_1(\widehat{f}^n(\widehat{z}))-p_1(\widehat{z})-n\alpha \geq -L.$$
  Likewise, if we assume that both boundary component rotation numbers are strictly smaller than $\beta$, then there exists a real constant $L>0$ such that for every $\widehat{z}\in \R\times [0,1]$ and every integer $n\geq 1$ we have
  $$ p_1(\widehat{f}^n(\widehat{z}))-p_1(\widehat{z})-n\beta \leq L. $$
\end{theob}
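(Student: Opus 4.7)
We concentrate on the first inequality; the second follows by applying it to the lift $\widehat{f}^{-1}$, whose rotation set is $[-\beta,-\alpha]$ and whose boundary rotation numbers $-\rho_0,-\rho_1$ are strictly larger than $-\beta$ under the symmetric hypothesis (the conclusion for $\widehat{f}^{-1}$ transforms, after reindexing via $\widehat{w}=\widehat{f}^{-n}(\widehat{z})$, into the second inequality for $\widehat{f}$). The strategy is to argue by contradiction, assuming the desired lower bound fails and producing, via the forcing theory of Le~Calvez--Tal for surface homeomorphisms, a rotation number for $\widehat{f}$ strictly less than $\alpha$, contradicting $\alpha=\min\rot(\widehat{f})$.

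I would begin by fixing a maximal identity isotopy $I$ from $\mathrm{id}$ to $f$ and invoking Le~Calvez's theorem to obtain an oriented singular transverse foliation $\mathcal{F}$ on $\Ac\setminus\fix(I)$ whose leaves are crossed positively by each trajectory of $I$. In the universal cover, the rotation number of an orbit of $\widehat{f}$ is encoded by the mean horizontal displacement of its associated transverse trajectory. Since each boundary circle is $f$-invariant and the restriction of $f$ to it is a circle homeomorphism with rotation number $\rho_i>\alpha$, the classical Poincar\'e theorem on $\R$ yields a uniform constant $C$ with
$$p_1(\widehat{f}^n(\widehat{z}))-p_1(\widehat{z})\ge n\min(\rho_0,\rho_1)-C\qquad\text{for every }\widehat{z}\in\R\times\{0,1\},\ n\ge 0.$$

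Now suppose the first conclusion fails. Then there exist $\widehat{z}_k\in\widehat{\Ac}$ and integers $n_k\to\infty$ with $p_1(\widehat{f}^{n_k}(\widehat{z}_k))-p_1(\widehat{z}_k)-n_k\alpha\to-\infty$. The associated transverse trajectories $\gamma_k$ translate to the right strictly slower than $\alpha$ in the mean. Using the Birkhoff instability of $\A$, I would concatenate each $\gamma_k$ with long orbit segments that enter a neighborhood of one boundary component, remain there for many iterations---translating horizontally at speed close to $\rho_i>\alpha$---and then return into the interior via a connecting orbit provided by the instability. By tuning the lengths of the boundary excursions, this yields very long concatenated transverse trajectories that wrap around the annulus many times while still having mean horizontal speed strictly less than $\alpha$. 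Applying the main forcing lemma---an $\mathcal{F}$-transverse self-intersection between a trajectory and a deck translate of itself forces periodic orbits of $\widehat{f}$ whose rotation numbers realize the observed horizontal speed---one extracts, for $k$ large, a periodic point of $\widehat{f}$ with rotation number strictly less than $\alpha$, giving the desired contradiction.

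The main obstacle lies in the last step. Two technical points require care: first, one must verify that the concatenated transverse trajectory truly admits an $\mathcal{F}$-transverse self-intersection with a deck translate, which demands quantitative control of $\mathcal{F}$ near each boundary component so that the interior ``slow'' segments and the boundary ``fast'' excursions are transversally linked after concatenation; and second, one must arrange the construction so that the sharp quantitative form of the forcing lemma delivers a periodic rotation number \emph{strictly} below $\alpha$, rather than one merely equal to $\alpha$. This typically requires pushing the mean horizontal speed of the concatenated trajectories uniformly below $\alpha$, which is possible thanks to the fact that $n_k\alpha - (p_1(\widehat{f}^{n_k}(\widehat{z}_k))-p_1(\widehat{z}_k))$ grows faster than the linear gain coming from the boundary excursions.
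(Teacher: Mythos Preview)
Your overall framework---maximal isotopy, transverse foliation, forcing lemma to constrain rotation numbers---is the right one, but the mechanism you propose has a genuine gap, and the paper's argument differs from yours in an essential way.

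The concatenation you describe is backwards. You want to attach to the ``slow'' segment $\gamma_k$ (mean horizontal speed below $\alpha$) some boundary excursions running at speed $\rho_i>\alpha$, and still end up with mean speed below $\alpha$. While you can keep the excursions short and the deviation of $\gamma_k$ large, the purpose of these excursions is never made precise: you say they are needed to produce an $\mathcal{F}$-transverse self-intersection, but you give no structural reason why a trajectory near the boundary, running \emph{to the right}, should $\mathcal{F}$-transversally intersect a deck translate of a trajectory in the interior running \emph{to the left}. Without a concrete description of $\mathcal{F}$ near the boundaries this step is pure hope.

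What is missing is the structural analysis the paper carries out first. From the hypothesis that both boundary rotation numbers exceed $\alpha$ (after normalization, are positive) one shows that in the sphere compactification the two ends $S,N$ are respectively a \emph{sink} and a \emph{source} of $\mathcal{F}$. This has two consequences you never invoke: first, after a coordinate change the leaves of $\widehat{\mathcal{F}}$ have \emph{uniformly bounded} horizontal diameter (so orbit displacement and transverse-path displacement differ by a bounded amount---this is why bounding admissible paths suffices); second, the Birkhoff instability produces, once and for all, two fixed admissible paths $\widehat{\gamma}^*_0,\widehat{\gamma}^*_1$ of fixed orders $n_0,n_1$, each running from the $S$-basin to the $N$-basin (or back), which act as ``walls'': \emph{any} transverse path whose horizontal displacement is below $-K^*$ must $\widehat{\mathcal{F}}$-transversally cross a translate of each of them. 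One then sandwiches an arbitrary admissible path $\widehat\gamma$ of order $n$ between pieces of $\widehat{\gamma}^*_0$ and $\widehat{\gamma}^*_1$; the resulting path is admissible of order $n+n_0+n_1$ and has a transverse intersection with a translate of itself by $(p',0)$. The forcing lemma gives $\alpha\le p'/(n+n_0+n_1)$, and since $p'$ differs from the displacement of $\widehat\gamma$ by a fixed constant, this is a \emph{direct} lower bound on $p_1(\widehat\gamma(b))-p_1(\widehat\gamma(a))-n\alpha$. No contradiction argument and no sequence $(\widehat z_k,n_k)$ is needed; in particular one never has to manufacture a rotation number strictly below $\alpha$.
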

Interestingly, this is to our knowledge the first positive result on bounded deviations for homeomorphisms of $\Ac$. Note that both the hypotheses that the rotation numbers of the boundary components of $\Ac$ lie in the interior of the rotation set and that $\A$ is a region of instability of Birkhoff cannot be removed. One can easily create examples of homeomorphisms of $\Ac$ that do not present bounded deviation when $\A$ is not a Birkhoff region of instability, and we present in Section \ref{sec:example}, the following example.
\begin{prop}
  There exists a homeomorphism $f$ of the closed annulus $\Ac$ which is isotopic to the identity, such that $\A$ is a Birkhoff region of instability for $f$ and such that $f$ has a lift $\widehat{f}$ to $\R\times [0,1]$ satisfying:
  \begin{itemize}
    \item[(i)] $\rot(\widehat{f})=[0,1]$, and
    \item[(ii)] for every real number $L>0$ there exists a point $\widehat{z}$ in $\R\times [0,1]$ and an integer $n$ such that
  $$ p_1(\widehat{f}^{n}(\widehat{z}))-p_1(\widehat{z}) < -L$$
  \end{itemize}
\end{prop}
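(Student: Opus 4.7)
The plan is to construct $f$ explicitly by modifying the standard shear $T(\theta,y)=(\theta+y,y)$, whose canonical lift $\widehat{T}(\widehat{\theta},y)=(\widehat{\theta}+y,y)$ already has rotation set $[0,1]$ and boundary rotation numbers $0$ and $1$ (each horizontal circle $\T{1}\times\{y\}$ being $T$-invariant and carrying Lebesgue rotation number $y$). Since $T$ preserves every horizontal circle, $\A$ is not a Birkhoff region of instability for $T$, and $T$ has no backward deviation; two further perturbations will fix these defects.

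\emph{Step 1: producing Birkhoff instability.} Compose $T$ with a small homeomorphism $\Phi_1$ isotopic to the identity and supported in a topological disk $D\subset\A$ bounded away from $\partial\Ac$, designed to produce vertical kicks in $D$. The composition $f_1:=\Phi_1\circ T$ is then arranged so that orbits from any neighborhood of either boundary eventually reach the opposite boundary, establishing that $\A$ is a Birkhoff region of instability for $f_1$. Since $\Phi_1$ is the identity outside $D$, the boundary rotations remain $0$ and $1$. By taking $\Phi_1$ sufficiently small one also ensures $\rot(\widehat{f_1})=[0,1]$: any ergodic $f_1$-invariant measure either lies on the boundary (rotation $0$ or $1$) or has rotation number given by the $\mu$-integral of the horizontal displacement of $f_1$, which remains in $[0,1]$ by continuity of the displacement and the fact that $f_1$ is $C^0$-close to $T$.

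\emph{Step 2: producing unbounded backward deviation.} Introduce a pairwise disjoint sequence of wandering regions $R_n\subset\A$, each a topological disk of diameter tending to $0$, and replace $f_1$ inside $R_n$ by a local homeomorphism which, in a suitable chart, acts as $(u,v)\mapsto(u-v,\,v-\epsilon_n)$ for some sequence $\epsilon_n\to 0^+$, glued to $f_1$ on $\partial R_n$. An orbit entering such an $R_n$ at the top traverses it downward in approximately $1/\epsilon_n$ iterates, during which its horizontal coordinate in the lift drifts backward by approximately $1/\epsilon_n$, which is arbitrarily large; conclusion (ii) follows by choosing $n$ with $1/\epsilon_n>L$. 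The gluing is arranged so that $f$ is a homeomorphism of $\Ac$ isotopic to the identity.

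\emph{Main obstacle.} The delicate point is preserving $\rot(\widehat{f})=[0,1]$ in the face of the arbitrarily long backward excursions. That $0,1\in\rot(\widehat{f})$ follows from the unchanged boundary dynamics. For the reverse inclusion, let $\mu$ be any ergodic $f$-invariant Borel probability measure. If $\mu$ charged some $R_n$, then Poincar\'e's recurrence theorem would force $\mu$-a.e. point of $R_n$ to return to $R_n$ infinitely often; but by design each orbit through $R_n$ exits $R_n$ and never returns (since the vertical drift $-\epsilon_n$ pushes orbits out and the $R_n$ are pairwise disjoint), so $\mu(\bigcup_n R_n)=0$, meaning $\mu$ is supported on the set where $f=f_1$, and hence $\rot(\widehat{f},\mu)=\rot(\widehat{f_1},\mu)\in[0,1]$.
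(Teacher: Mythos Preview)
Your Step 1 cannot work as stated. If $\Phi_1$ is supported in a topological disk $D\subset\A$ that is bounded away from $\partial\Ac$, choose $\epsilon>0$ with $D\cap\bigl(\T{1}\times[0,\epsilon]\bigr)=\emptyset$. For any $(\theta,y)$ with $y<\epsilon$ one has $T(\theta,y)=(\theta+y,y)\notin D$, hence $f_1(\theta,y)=\Phi_1\circ T(\theta,y)=(\theta+y,y)$. Thus every horizontal circle $\T{1}\times\{y\}$ with $y<\epsilon$ is $f_1$-invariant, and no orbit starting in $\T{1}\times(0,\epsilon)$ can ever reach a neighborhood of $\T{1}\times\{1\}$. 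So $\A$ is \emph{not} a Birkhoff region of instability for $f_1$. Any perturbation producing Birkhoff instability must affect points arbitrarily close to both boundary circles; a compactly supported perturbation in the interior cannot do this for the shear. The same obstruction shows that the ``$\Phi_1$ sufficiently small'' appeal for keeping $\rot(\widehat{f_1})=[0,1]$ is in tension with the instability requirement: there is no such $\Phi_1$ at all.

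Step 2 also has gaps. If the $R_n$ have diameter tending to $0$ and the chart is approximately an isometry, the per-step horizontal displacement inside $R_n$ is bounded by $\diam(R_n)$, not by a constant, so the claimed drift of order $1/\epsilon_n$ does not follow without further justification of what the ``suitable chart'' is and how the gluing on $\partial R_n$ matches $f_1$. More seriously, the claim that orbits exit $R_n$ and never return is not automatic: outside $\bigcup_n R_n$ the map is $f_1$, which is close to a shear, and shear orbits on irrational circles are dense, so small disks are typically \emph{not} wandering for $f_1$; you would need to explain carefully where the $R_n$ are placed and why recurrence to them is impossible.

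For comparison, the paper takes the opposite route: it starts from the identity rather than from the shear, and builds two families of thin ``tubes'' $U$ and $V$ in $\A$, each spiraling along the graph of $y\mapsto 1/y$ all the way between the two boundary circles. A vertical dynamic $y\mapsto g(y)$ (resp.\ $y\mapsto g^{-1}(y)$) is installed inside $U$ (resp.\ $V$), so that orbits in $U$ drift from the upper to the lower boundary and orbits in $V$ drift from the lower to the upper; this gives Birkhoff instability directly, with no disk perturbation needed. The condition $\lim_{y\to 0}\bigl(1/y-1/g(y)\bigr)=0$ guarantees the horizontal displacement tends to $0$ at the boundary, so the map extends continuously, while along an orbit in $V$ starting at height $y$ one accumulates a horizontal displacement of order $-1/y$, which is unbounded below. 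Only afterwards is a twist supported in a fixed compact horizontal strip added to produce a point of rotation number $1$; the rotation set is controlled by showing that every nonwandering point outside that strip is fixed for the lift. The key structural difference is that the instability-producing and deviation-producing mechanism is supported on sets that \emph{accumulate on the boundary}, which is precisely what your disk-supported $\Phi_1$ forbids.
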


The third topic we deal in this paper is of the strong realization of rotation numbers. We say that a point $z\in\Ac$ \textit{has rotation number equal to $\rho$} if, for any $\widehat{z}\in \widehat \pi^{-1}(z)$, one has $\lim_{n\to\infty}p_1(\widehat{f}^{n}(\widehat{z})-\widehat{z})/n=\rho$, and we note that if the limit exists, it is independent of which $\widehat{z}$ one chooses  in $\widehat \pi^{-1}(z)$. We say that a number $\rho\in\rot(\widehat{f})$ \textit{is realized by an ergodic measure} if there exists some $f$-invariant ergodic measure $\nu$ such that $\rot(\widehat{f},\nu)=\rho$. Finally, one says that $\rho$ \textit{is realized by a compact invariant set} if there exists a compact invariant set $Q$ such that all point in $Q$ have rotation number equal to $\rho$. There is a natural hierarchy of realization. Any $\rho$ that is realized by a compact set is also realized by an ergodic measure, any $\rho$ that is realized by an ergodic measure is also the rotation number of some point, and the rotation number of points are clearly contained in $\rot(\widehat{f})$. Note that, if $f$ is an area-preserving twist map of the open annulus, then a ground-breaking result by Mather (see \cite{mather}) shows that every point $\rho$ in the rotation set of $\widehat{f}$ is realized by a compact set, the so called Aubry-Mather set of $\rho$. A natural question is then to decide which points in the rotation set of $\widehat{f}$ were realized by compact subsets.

This turned out to be a difficult problem to tackle. An important result by Handel (see \cite{Handel}) showed that the set of points that are realized by ergodic measures is a closed subset of $\rot(\widehat{f})$ and he further showed that, except for a possible discrete subset, all were also realized by compact invariant sets. Franks (see \cite{franksannals1}) showed that, if $f$ preserves a measure of full support, then every rational number in $\rot(\widehat{f})$ is realized by a periodic orbit and Le Calvez (\cite{LeCalvezInventiones}) showed that, if $f$ is an area-preserving diffeomorphism, then every point in the rotation set is realized by a compact invariant subset. The general question on whether a point in the rotation set of $f$  is always realized by a compact invariant set remains open.

Our third theorem, that relies on Theorem B, shows that the answer to this problem is true for regions of instability of Mather. We will say that $\A=\T{1}\times (0,1)$ is a {\it Mather region of instability} for a homeomorphism $f$ of $\Ac$ if there exists points $z_1, z_2$ in $\A$ such that the $\alpha$-limit set of $z_1$ is contained in $\T{1} \times \{0\}$ and while the $\omega$-limit set of $z_1$ is contained in $\T{1} \times \{1\}$ and such that the $\alpha$-limit set of $z_2$ is contained in $\T{1} \times \{1\}$ and while the $\omega$-limit set of $z_2$ is contained in $\T{1} \times \{0\}$.
\begin{theoc}
 Let $f$ be a homeomorphism of the closed annulus $\Ac=\T{1}\times [0,1]$ which is isotopic to the identity. Suppose that $\A=\T{1}\times (0,1)$ is a Mather region of instability for $f$.  Let $\widehat{f}$ be a lift of $f$ to $\R\times [0,1]$. For every $\rho$ in $\rot(\widehat{f})$ there exists a compact invariant set $Q_\rho$ such that for every point of $Q_\rho$ has a rotation number well-defined and it is equal to $\rho$. Moreover, if $\rho=p/q$ is a rational number, written in an irreducible way, then  $Q_\rho$ is the orbit of a period point of period $q$.
\end{theoc}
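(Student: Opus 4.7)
The plan is to derive Theorem C by combining Theorem B, the realization results for annular continua cited in the introduction, and a forcing-theoretic construction of periodic points. I would proceed in three stages.

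\textbf{Stage 1 (hypotheses of Theorem B).} A Mather region of instability is automatically a Birkhoff region of instability, since for any neighborhoods $U,V$ of the two boundary components the orbit of $z_1$ produces iterates $f^{-n_1}(z_1)\in U$ and $f^{n_2}(z_1)\in V$ from the constraints on its $\alpha$- and $\omega$-limits, while $z_2$ produces the reverse transition. In the degenerate case $\rot(\widehat{f})=\{\alpha\}$ the conclusion is immediate: either boundary circle serves as $Q_\alpha$, and periodic points of the boundary dynamics handle a rational $\alpha$. Otherwise $\rot(\widehat{f})=[\alpha,\beta]$ with $\alpha<\beta$, and one must check that the two boundary rotation numbers $\alpha_0,\alpha_1$ lie strictly in the open interval $(\alpha,\beta)$, so that both halves of Theorem B apply. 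Here the key observation is that the forward Birkhoff averages of $z_1$ accumulate on invariant probability measures supported in $\omega(z_1)\subset\T{1}\times\{1\}$, forcing the forward rotation of $z_1$ to equal $\alpha_1$ (and symmetrically the backward rotation to equal $\alpha_0$, and analogously for $z_2$); a rigidity argument should then exclude the possibility that $\alpha_0$ or $\alpha_1$ coincides with an endpoint of $[\alpha,\beta]$, since the extremal rotations in an invariant annulus are realized by pseudo-rotational structures that are incompatible with the presence of Mather connecting orbits joining the two boundary components.

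\textbf{Stage 2 (realization by compact invariant sets).} Applying both conclusions of Theorem B yields a constant $L>0$ with
$$ n\alpha - L \;\le\; p_1(\widehat{f}^n(\widehat{z}))-p_1(\widehat{z}) \;\le\; n\beta + L $$
for every $\widehat{z}\in\R\times[0,1]$ and every $n\ge 1$. Thus $\Ac$ is an invariant annular continuum with uniformly bounded deviations and rotation set $[\alpha,\beta]$. The realization theorem for rotation vectors on annular continua referenced in the introduction then provides, for every $\rho\in[\alpha,\beta]$, a compact $f$-invariant set $Q_\rho\subset\Ac$ on which every point has a well-defined pointwise rotation number equal to $\rho$, giving the first conclusion.

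\textbf{Stage 3 (periodic orbits).} If $\rho=p/q$ is written in lowest terms, every periodic point with rotation number $\rho$ must have period a positive integer multiple of $q$, so it suffices to exhibit a periodic point of rotation $p/q$ and minimal period $q$. I would extract such a periodic point from $Q_\rho$ by means of the forcing theory developed in the body of the paper: combined with the Mather connecting orbits $z_1,z_2$ and the boundary rotation numbers $\alpha_0,\alpha_1$ strictly in $(\alpha,\beta)$ obtained in Stage 1, one has enough topological complexity in $\Ac$ to force, via admissible pseudo-orbits of average slope $p/q$, a genuine periodic orbit of rotation $p/q$; irreducibility of $p/q$ then constrains the minimal period to be exactly $q$. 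The main technical obstacle I anticipate is Stage 1: showing that the Mather hypothesis rules out boundary rotation numbers at the extrema of $[\alpha,\beta]$ is precisely where the strength of the Mather assumption (beyond the weaker Birkhoff one) must be used, and carrying this out cleanly appears to require a delicate analysis of the invariant measures that attain the extremal rotations and of their interaction with the Mather connecting orbits.
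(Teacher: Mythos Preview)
Your Stage 1 claim — that in a Mather region of instability the boundary rotation numbers must lie strictly in $(\alpha,\beta)$ — is the crux of your approach, and it is false. The example constructed in Section~\ref{sec:example} is a Mather region of instability (the orbits of the points $z_0,z_1$ constructed there go from one boundary circle to the other), has $\rot(\widehat{f})=[0,1]$, and both boundary rotation numbers equal $0$. So the ``rigidity argument'' you anticipate cannot exist, and with it your route to two-sided bounded deviations via Theorem B collapses. Stage 2 is also vague: there is no theorem in the paper or in the cited literature that passes directly from two-sided bounded deviations on $\Ac$ to compact invariant sets realizing each $\rho$.

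The paper's argument is structured quite differently and never needs the boundary rotation numbers to be interior. It proves the stronger Proposition~\ref{pr:realizationSN} (for $SN$ mixed regions of instability, a weaker hypothesis than Mather) and splits into two cases. For an endpoint $\alpha$: either some boundary circle already has rotation number $\alpha$ (done), or both boundary numbers exceed $\alpha$ and the \emph{one-sided} bound from Theorem B applies; one then takes $Q_\alpha$ to be the closure of the supports of all invariant measures with rotation number $\alpha$, and uses Atkinson's Lemma (Proposition~\ref{lemmaatkinson}) together with the one-sided bound to obtain the matching upper bound on that set. For an interior $\rho$: there is no appeal to Theorem B or to external realization results at all. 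Instead one uses the $SN$ mixed instability directly to find orbits accumulating on the upper boundary from both time directions, builds from them an admissible transverse path $\widehat\gamma$ (admissible of some order $n$) with a $\widehat{\mathcal{F}}$-transverse intersection with $\widehat\gamma+(j,0)$ for some $j>n$, and then invokes the forcing result Proposition~\ref{pr:newperiodicforcing2} to produce the compact set $Q_\rho$. The rational case comes out of the same forcing construction.
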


Finally, by combining Theorem C and results from Koropecki (see \cite{Koropecki}), Franks and Le Calvez (see \cite{franks/lecalvez:2003})  and Koropecki, Le Calvez and Nassiri (see \cite{KLN}), we are able to deduce the following extension of the above mentioned result by Le Calvez, by improving the smoothness requirements.

\begin{theod}\label{theod:patriceimprovement}
Let $f$ be an area-preserving homeomorphism of the closed annulus $\Ac=\T{1}\times [0,1]$ which is isotopic to the identity. Let $\hat{f}$ be a lift of $f$ to $\R\times [0,1]$. For every $\rho$ in $\rot(\hat{f})$ there exists a compact $f$-invariant set $Q_\rho$ such that for every point of $Q_\rho$ has a rotation number well-defined and it is equal to $\rho$.
\end{theod}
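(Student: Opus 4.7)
The plan is to split the problem along the dichotomy: either $\A$ is a Mather region of instability for $f$, in which case Theorem~C applies directly and gives the desired compact invariant realization for every $\rho\in\rot(\hat f)$, or it is not, in which case a decomposition of $\Ac$ along essential invariant continua is required. When $\A$ fails to be a Mather region of instability and $f$ is area-preserving, a Birkhoff-type structural result by Koropecki~\cite{Koropecki} ensures the existence of at least one essential $f$-invariant continuum separating the two boundary circles, which is what allows the decomposition to get started.

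Next I would consider the family $\mathcal{E}$ of all essential $f$-invariant continua of $\Ac$, including the two boundary circles, ordered by the separation relation $K\prec K'$ if $K$ is contained in the connected component of $\Ac\setminus K'$ that contains $\T{1}\times\{0\}$. The complement $\Ac\setminus\bigcup\mathcal{E}$ decomposes as a disjoint union of open $f$-invariant sub-annuli $U_i$, each bordered by two elements of $\mathcal{E}$ (possibly boundary circles) and free of further essential invariant continua in its interior. The key technical step is that, after prime ends compactification, each $U_i$ gives rise to a closed topological annulus on which the induced homeomorphism has the open interior as a Mather region of instability; this uses area-preservation, Poincar\'e recurrence, and the description of the boundary dynamics of essential invariant open sets from~\cite{KLN}. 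Theorem~C, applied to each of these compactified sub-annuli, provides a compact invariant realization of every rotation number belonging to the rotation set of the restriction of $\hat f$ to $\overline{U_i}$.

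It remains to realize the rotation numbers that arise only on an essential invariant continuum $K\in\mathcal{E}$ itself. For the irrational values, Koropecki--Le Calvez--Nassiri~\cite{KLN} yields a compact invariant subset of $K$ on which the rotation number is constant and matches the prescribed (prime ends) value of $K$. For rational rotation numbers associated with such a $K$, Franks--Le Calvez~\cite{franks/lecalvez:2003}, combined with the full-support invariant measure coming from area-preservation, produces a periodic orbit inside or adjacent to $K$ with the prescribed rational rotation number. A monotonicity argument along the order $\prec$ then shows that the rotation numbers realized in this way, together with the rotation sets of the $\overline{U_i}$, exhaust $\rot(\hat f)$, completing the proof.

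The main obstacle is the middle step, namely the verification that each sub-annulus $U_i$, once prime-ends-compactified, is genuinely a Mather region of instability for the induced dynamics. One must promote the mere absence of further essential invariant continua inside $U_i$ to the existence of orbits whose $\alpha$- and $\omega$-limits lie respectively on the two bordering continua, in both directions; this is precisely where Poincar\'e recurrence, Koropecki's structural results~\cite{Koropecki}, and the prime-ends boundary analysis of~\cite{KLN} are indispensable, since the bordering continua may be topologically intricate and the induced dynamics at their accessible boundary must be controlled delicately.
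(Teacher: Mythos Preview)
Your overall architecture---decompose along essential invariant continua and apply a realization result in each complementary sub-annulus---is close in spirit to what the paper does, but there is a genuine gap precisely at the step you flag as the main obstacle. You assert that each sub-annulus $U_i$, once prime-ends compactified, is a \emph{Mather} region of instability. This is not established by the tools you cite, and the paper explicitly records (Section~2.8) that the question of whether every Birkhoff region of instability for an area-preserving homeomorphism must be a Mather region is open in general. What the absence of further essential invariant continua in $U_i$, together with nonwandering dynamics, does give you is that $U_i$ is a \emph{Birkhoff} region of instability; but Theorem~C requires Mather, so you cannot invoke it as stated.

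The paper closes this gap not by upgrading Birkhoff to Mather, but by introducing an intermediate notion: Proposition~\ref{pr:semi-mather} shows that inside any area-preserving Birkhoff region of instability with nontrivial rotation set one can locate a sub-annulus which is an \emph{$SN$ mixed} region of instability (weaker than Mather) carrying the same rotation set, and Proposition~\ref{pr:realizationSN} extends the conclusion of Theorem~C to $SN$ mixed regions. These combine into Corollary~\ref{cor:Birkhoffrealization}, which is the realization tool actually used. There is also a structural difference in organization: rather than decomposing $\Ac$ once and for all along the family $\mathcal{E}$ of all essential invariant continua, the paper works one irrational $\rho$ at a time, building a specific invariant continuum $K_0$ as the nested intersection of orbit-neighborhoods of a limit point of periodic orbits with rotation numbers converging to $\rho$, and then running a case analysis on the interior of this particular $K_0$. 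This avoids having to control the order structure of $\mathcal{E}$ globally.
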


The paper draws heavily from Le Calvez's Brouwer Equivariant Theory and also from a forcing theory for surface homeomorphisms recently developed by Le Calvez and the second author. These results are predicated on the study of maximal isotopies, Brouwer-Le Calvez transverse foliations and transverse paths to these foliations, concepts that are better described in Section \ref{sec: preliminary results}. It is the use of this new tool, coupled with a careful analysis of possible transverse paths of maps of the annulus and classical ergodic theory lemmas that allows us to deduce the main results. As stated before, in Section \ref{sec: preliminary results} we introduce the basic lemmas and results from the above mentioned forcing theory, as well as detail the concept of rotation set for annular homeomorphisms. Section \ref{sec: proof of theorem A} is devoted to showing Theorem A. Section \ref{sec:proofoftheoremB} includes the proof of Theorem B and Section \ref{sec:example} provides an example displaying how tight are the hypotheses of Theorem B. Section \ref{sec:realizationresults} contains the proofs of Theorem C and Theorem D.

\subsection{Acknowledgements}
F. T. was partially supported by the Alexander von Humboldt foundation and by Fapesp and CNPq.  J. C. was supported by CNPq-Brasil and by FONDECYT postdoctoral grant N 3170455 entitled ``Algunos problemas para grupos de homeomorfismos de superficies''. The authors would like to thanks A. Koropecki for substantial discussions regarding this work in general and in particular about Theorem D.

\section{Preliminary results} \label{sec: preliminary results}
In this section, we state different results and definitions that will be useful in the rest of the article. The main tool will be the ``forcing theory'' introduced recently by Le Calvez and the second author (see \cite{lct} for more details) and further developed in \cite{newlct}. This theory will be expressed in terms of maximal isotopies, transverse foliations and transverse trajectories.

\subsection{Transverse paths to surface foliations}

Let $M$ be an oriented surface. An \textit{oriented singular foliation} $\mathcal{F}$ on $M$ is a closed set $\sing (\mathcal{F})$, called \textit{the set of singularities of $\mathcal{F}$}, together with an oriented foliation $\mathcal{F}'$ on the complement of $\sing (\mathcal{F})$, called \textit{the domain of $\mathcal{F}$} denoted by $\dom(\mathcal{F})$, i.e. $\mathcal{F}'$ is a partition of $\dom(\mathcal{F})$ into connected oriented $1$-manifolds (circles or lines) called \textit{leaves of $\mathcal{F}$}, such that for every $z$ in $\dom(\mathcal{F})$ there exist an open neighborhood $W$ of $z$, called \textit{trivializing neighborhood} and an orientation-preserving homeomorphism called \textit{trivialization chart at $z$}, $\fonc{h}{W}{(0,1)^2}$ that sends the restricted foliation $\mathcal{F}|_W$ onto the vertical foliation oriented downward. If the singular set of $\mathcal{F}$ is empty, we say that the foliation $\mathcal{F}$ is \textit{non singular}. For every $z\in \dom(\mathcal{F})$ we write $\phi_z$ for the leaf of $\mathcal{F}$ that contains $z$, $\phi^+_z$ for the positive half-leaf and $\phi^-_z$ for the negative one.

A \textit{path} on $M$ is a continuous map $\fonc{\gamma}{J}{M}$ defined on an interval $J$ of $\R$. In absence of ambiguity its image also will be called a path and denoted by $\gamma$. A path $\fonc{\gamma}{J}{\dom(\mathcal{F})}$ is \textit{positively transverse}\footnote{In the whole text ``transverse'' will mean ``positively transverse''} to $\mathcal{F}$ if for every $t_0\in J$ there exists a trivialization chart $h$ at $\gamma(t_0)$ such that the application $t\mapsto \pi_1(h(\gamma(t)))$, where $\fonc{\pi_1}{(0,1)^2}{(0,1)}$ is the projection on the first coordinate, is increasing in a neighborhood of $t_0$. We note that if $\widehat{M}$ is a covering space of $M$ and $\fonc{\widehat{\pi}}{\widehat{M}}{M}$ the covering projection, then $\mathcal{F}$ can be naturally lifted to a singular foliation $\widehat{\mathcal{F}}$ of $\widehat{M}$ such that $\dom(\widehat{\mathcal{F}})= \widehat{\pi}^{-1}(\dom(\mathcal{F}))$. We will denote $\widetilde{\dom}(\mathcal{F})$ the universal covering space of $\dom(\mathcal{F})$ and $\widetilde{\mathcal{F}}$ the foliation lifted from $\mathcal{F}|_{\dom(\mathcal{F})}$. We note that $\widetilde{\mathcal{F}}$ is a non singular foliation of $\widetilde{\dom}(\mathcal{F})$.  Moreover if $\fonc{\gamma}{J}{\dom(\mathcal{F})}$ is \textit{positively transverse} to $\mathcal{F}$, every lift $\fonc{\widehat{\gamma}}{J}{\dom(\widehat{\mathcal{F}})}$ of $\gamma$ is \textit{positively transverse} to $\widehat{\mathcal{F}}$. In particular every lift $\fonc{\widetilde{\gamma}}{J}{\widetilde{\dom}(\mathcal{F})}$ of $\gamma$ to the universal covering space $\widetilde{\dom}(\mathcal{F})$ of $\dom(\mathcal{F})$  is \textit{positively transverse} to the lifted non singular foliation $\widetilde{\mathcal{F}}$.

\subsubsection{Transverse paths intersecting $\mathcal{F}$-transversally}

 A \textit{line} on $M$ is an injective and proper path $\fonc{\lambda}{J}{M}$, that is, the interval $J$ is open and the pre-image of every compact subset of $M$ is compact. It inherits a natural orientation induced by the usual orientation of $\R$. Let $\lambda$ be a line of the plane $\R^2$. The complement of $\lambda$ has two connected component, $R(\lambda)$ which is on the right of $\lambda$ and $L(\lambda)$ which is on its left. We will say that a line $\lambda$ \textit{separates} $X$ from $Y$, if $X$ and $Y$ belong to different connected components of the complement of $\lambda$. Let us consider three pairwise disjoint lines $\lambda_0$, $\lambda_1$ and $\lambda_2$ in $\R^2$. We  say that $\lambda_2$ is \textit{above} $\lambda_1$ relative to $\lambda_0$ (and $\lambda_1$ is \textit{below} $\lambda_2$ relative to $\lambda_0$) if none of the lines separates the two others; and if $\gamma_1$ and $\gamma_2$ are two disjoint paths that join $z_1=\lambda_0(t_1)$, $z_2=\lambda_0(t_2)$ to $z'_1\in \lambda_1$, $z'_2\in \lambda_2$ respectively, and that do not meet the three lines but at the ends,
then $t_2>t_1$. This notion does not depend on the orientation of $\lambda_1$ and $\lambda_2$ but depends of the orientation of $\lambda_0$ (see Figure \ref{fig:orderoflines}).

\begin{center}
\begin{figure}[h!]
  \centering
    \includegraphics{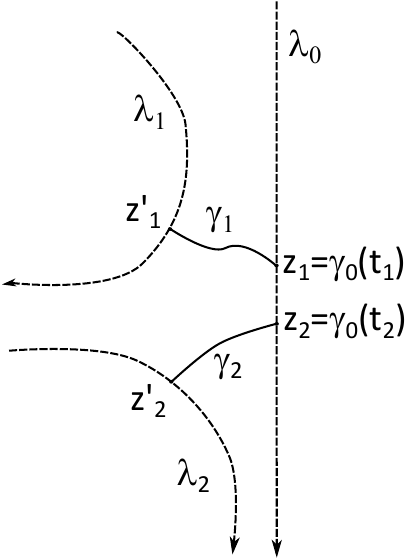}
  \caption{$\lambda_2$ is above $\lambda_1$ relative to $\lambda_0$.}
  \label{fig:orderoflines}
\end{figure}
\end{center}

Let suppose that $\mathcal{F}$ is an oriented singular foliation on an oriented surface $M$. Let $\fonc{\gamma_1}{J_1}{\dom(\mathcal{F})}$ and $\fonc{\gamma_2}{J_2}{\dom(\mathcal{F})}$ be two transverse paths. Suppose that there exist $t_1\in J_1$ and $t_2\in J_2$ such that  $\gamma_1(t_1)=\gamma_2(t_2)$. We say that $\gamma_1$ \textit{intersects $\gamma_2$  $\mathcal{F}$-transversally and positively at $\gamma_1(t_1)=\gamma_2(t_2)$}, if there exist $a_1,\,b_1$ in $J_1$ satisfying $a_1<t_1<b_1$, and $a_2,\, b_2$ in $J_2$ satisfying $a_2<t_2<b_2$ such that if $\fonc{\widetilde{\gamma}_1}{J_1}{\widetilde{\dom}(\mathcal{F})}$ and $\fonc{\widetilde{\gamma}_2}{J_2}{\widetilde{\dom}(\mathcal{F})}$ are lifts of $\gamma_1$ and $\gamma_2$ respectively, satisfying $\widetilde{\gamma}_1(t_1)=\widetilde{\gamma}_2(t_2)$ then
\begin{itemize}
  \item $\phi_{\widetilde{\gamma}_2(a_2)}$ is below $\phi_{\widetilde{\gamma}_1(a_1)}$ relative to $\phi_{\widetilde{\gamma}_1(t_1)}$; and
  \item $\phi_{\widetilde{\gamma}_2(b_2)}$ is above $\phi_{\widetilde{\gamma}_1(b_1)}$ relative to $\phi_{\widetilde{\gamma}_2(t_2)}$.
\end{itemize}
See Figure \ref{fig:intersectiontransverse}. In this situation we also say that $\gamma_2$ intersects $\gamma_1$ $\mathcal{F}$-transversally and negatively at $\gamma_1(t_1)=\gamma_2(t_2)$, and that $\gamma_1$ and $\gamma_2$ have a $\mathcal{F}$-transversal intersection at $\gamma_1(t_1)=\gamma_2(t_2)$.

\begin{center}
\begin{figure}[h!]
  \centering
   \includegraphics{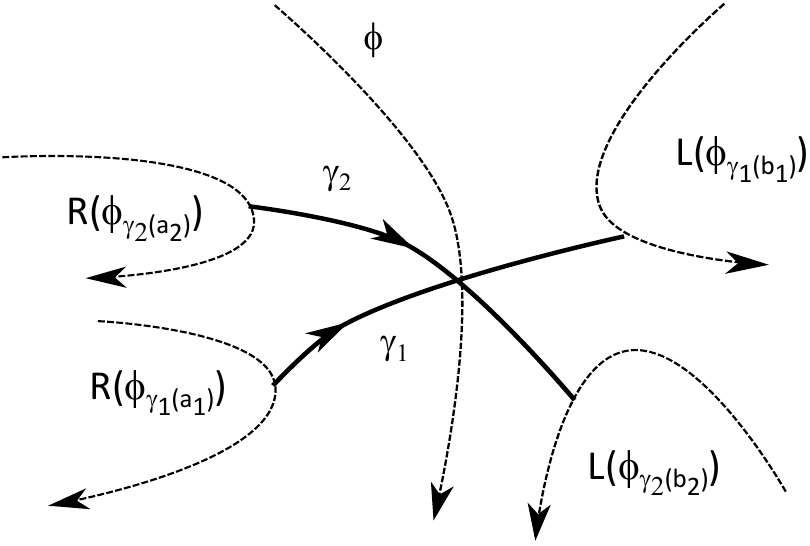}
  \caption{The paths $\gamma_1$ and $\gamma_2$ intersect $\mathcal{F}$-transversally and positively at $\gamma_1(t_1)=\gamma_2(t_2)$.}
  \label{fig:intersectiontransverse}
\end{figure}
\end{center}

If $\gamma_1=\gamma_2$, we will that $\gamma_1$ has a $\mathcal{F}$-self intersection. This means that if $\widetilde{\gamma}_1$ is a lift of $\gamma$ to the universal covering of $\dom(\mathcal{F})$, then there exists a covering automorphism $T$ such that $\widetilde{\gamma}_1$ and $T(\widetilde{\gamma}_1)$ have a $\widetilde{\mathcal{F}}$-transverse intersection at $\widetilde{\gamma}_1(t_1)=T(\widetilde{\gamma}_1(t_2))$.

\subsection{Maximal isotopies, transverse foliations, admissible paths}

\subsubsection{Isotopies, maximal isotopies}

Let $M$ be an oriented surface. Let $f$ be a homeomorphism of $M$. An \textit{identity isotopy} of $f$ is a path that joins the identity to $f$ in the space of homeomorphisms, furnished with the $C^0$-topology. We will say that \textit{$f$ is isotopic to the identity} if the set of identity isotopies of $f$ is not empty. Let $I=\left(f_t\right)_{t\in [0,1]}$ be an identity isotopy of $f$. Given $z\in M$ we can define the \textit{trajectory of $z$} as the path $I(z): t \mapsto f_t(z)$. For every integer $n\geq 1$ we define $I^n(z)=\prod_{0\leq k<n} I(f^k(z))$ by concatenation. Futhermore, we define

$$ I^{\N}(z)= \prod_{k\in \N} I(f^k(z)), \quad I^{-\N}(z)= \prod_{k\in \N} I(f^{-k}(z)), \quad I^{\Z}(z)= \prod_{k\in \Z} I(f^k(z)).     $$
The last path will be called the \textit{whole trajectory of $z$}. One can define the fixed point of $I$ as $\fix(I)= \cap_{t\in [0,1]} \fix(f_t)$, which is the set of point with trivial whole trajectory. The complement of $\fix(I)$ will called the \textit{domain of $I$}, and it will be denoted by $\dom(I)$.

In general, let us say that an identity isotopy of $f$ is a maximal isotopy, if there is no fixed point of $f$ whose trajectory is contractible relative to the fixed point set of $I$. A very recent result of F. B\'eguin, S. Crovisier and F. Le Roux (see \cite{BCLR16}) asserts that such an isotopy always exists if $f$ is isotopic to the identity (a slightly weaker result was previously proved by O. Jaulent (see \cite{jaulent}).

\begin{theo}[\cite{jaulent}, \cite{BCLR16}]\label{existence maximal isotopy}
  Let $M$ be an oriented surface. Let $f$ be a homeomorphism of $M$ which is isotopic to the identity and let $I'$ be an identity isotopy of $f$. Then there exists an identity isotopy $I$ of $f$ such that:
  \begin{itemize}
    \item[(i)] $\fix(I')\subset \fix(I)$;
    \item[(ii)] $I$ is homotopic to $I'$ relative of $\fix(I')$;
    \item[(iii)] there is no point $z\in \fix(f)\setminus \fix(I)$ whose trajectory $I(z)$ is homotopic to zero in $M\setminus \fix(I)$.
  \end{itemize}
\end{theo}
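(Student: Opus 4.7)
The plan is to prove this by a Zorn's lemma argument on a suitably chosen partially ordered set of ``enlarged'' isotopies, roughly following the Jaulent strategy with the refinements needed to get a genuine isotopy at the maximal element.

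First I would set up the poset. Consider pairs $(X,[I])$ where $X$ is a closed subset of $\fix(f)$ containing $\fix(I')$, and $[I]$ is a homotopy class (relative to $X\times\{0,1\}$) of identity isotopies of $f$ that fix every point of $X$ throughout the isotopy, such that some representative is homotopic to $I'$ relative to $\fix(I')$. Order them by declaring $(X_1,[I_1])\preceq (X_2,[I_2])$ when $X_1\subseteq X_2$ and some representative of $[I_2]$, viewed as an isotopy on $M\setminus X_1$, lies in $[I_1]$. The pair $(\fix(I'),[I'])$ shows the poset is nonempty. Property (iii) will translate at a maximal element into: no extra fixed point of $f$ has a trajectory null-homotopic in $M\setminus \fix(I)$.

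Next comes the technical heart: showing every chain has an upper bound. For a chain $\{(X_\alpha,[I_\alpha])\}$, let $X=\overline{\bigcup_\alpha X_\alpha}$; since each $X_\alpha\subseteq\fix(f)$ and $\fix(f)$ is closed, $X\subseteq\fix(f)$. The isotopies $I_\alpha$ on $M\setminus X_\alpha$ must be glued into a single isotopy on $M\setminus X$; the key is to choose compatible representatives using the fact that two homotopic isotopies differ by a loop in $\homeo(M)$ and to invoke the contractibility of the identity component locally, patching through trivializing charts around the accumulation points of $\bigcup_\alpha X_\alpha$. The main obstacle is verifying that after passing to the closure one still obtains a continuous isotopy fixing $X$ pointwise; this requires a careful uniform-continuity argument controlling the oscillation of $I_\alpha$ near points of $X$, which is the step where one genuinely uses that one is in dimension two and that $f$ fixes these points.

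With Zorn's lemma, pick a maximal element $(X,[I])$ and fix a representative $I$. Claim $X=\fix(I)$: the inclusion $X\subseteq\fix(I)$ is by construction, and if $z\in\fix(I)\setminus X$, then $(X\cup\{z\},[I])$ would strictly extend $(X,[I])$. This gives (i) and (ii) at once. For (iii), suppose there were $z\in\fix(f)\setminus\fix(I)$ with $I(z)$ contractible in $M\setminus\fix(I)$. The second, and equally delicate, obstacle is then to produce from this null-homotopy a new isotopy $I^{\prime\prime}$, homotopic to $I$ relative to $\fix(I)$, which additionally fixes $z$ throughout: one lifts $I(z)$ to a disk in the universal cover of $M\setminus\fix(I)$, uses the null-homotopy to deform the lift of $I$ along the disk so that $z$ becomes stationary, and projects back down, taking care that nothing is disturbed on $\fix(I)$. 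The resulting pair $(\fix(I)\cup\{z\},[I^{\prime\prime}])$ strictly extends $(X,[I])$, contradicting maximality. This last deformation is where most of the work lies, and is precisely what B\'eguin--Crovisier--Le Roux refined over the weaker Jaulent version; the rest of the argument is then essentially formal.
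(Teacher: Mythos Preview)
The paper does not contain a proof of this theorem: it is quoted as a result from the cited references \cite{jaulent} and \cite{BCLR16} and used as a black box. There is therefore nothing in the paper to compare your proposal against.

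That said, your sketch is a fair summary of the strategy in those references: a Zorn's lemma argument on pairs $(X,[I])$, with the two genuinely delicate points being (a) the construction of an upper bound for a chain, where one must show the limiting object is an honest isotopy fixing the closure $X$, and (b) the deformation step producing an isotopy that additionally fixes a point $z$ with contractible trajectory. You correctly identify that (a) is where the B\'eguin--Crovisier--Le Roux refinement over Jaulent lies; Jaulent's original argument only yields a ``singular'' isotopy defined on the complement of the fixed set, and promoting this to a genuine isotopy on all of $M$ is nontrivial. Your description of (b) via lifting to the universal cover of $M\setminus\fix(I)$ is also the right idea, though the actual construction is more involved than a single null-homotopy manipulation. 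As a high-level outline this is sound, but be aware that filling in either of these steps rigorously is substantial work, not ``essentially formal'' as you suggest at the end.
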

We will say that an identity isotopy $I$ satisfying the conclusion of Theorem \ref{existence maximal isotopy} is a \textit{maximal isotopy}. We note that the last condition of the above theorem can be stated in the following equivalent form:
 \begin{itemize}
    \item[(iii')] if $\widetilde{I}=(\widetilde{f}_t)_{t\in [0,1]}$ is the identity isotopy that lifts $I|_{M\setminus \fix(I)}$ to the universal covering space of $M\setminus \fix(I)$, then $\widetilde{f}_1$ is fixed point free.
  \end{itemize}

\subsubsection{Transversal foliations}

Let us recall the equivariant foliation version of the Plane Translation Theorem due to P. Le Calvez (see \cite{lecalvez1}).

\begin{theo}[\cite{lecalvez1}]\label{existence transverse foliation}
  Let $M$ be an oriented surface. Let $f$ be a homeomorphism of $M$ which is isotopic to the identity and let $I$ be a maximal identity isotopy of $f$. Then there exists an oriented singular foliation $\mathcal{F}$ with $\dom(\mathcal{F})=\dom(I)$, such that for every $z\in \dom(I)$ the trajectory $I(z)$ is homotopic, relative to the endpoints, to a positively transverse path to $\mathcal{F}$ and this path is unique defined up to equivalence.
\end{theo}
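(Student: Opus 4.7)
The plan is to adapt Le Calvez's proof of the planar version of this theorem to the equivariant setting, using a Brick decomposition as the main combinatorial bridge between the dynamics and the foliation. First I would lift everything to the universal covering $\widetilde{\dom}(I)$ of $\dom(I)$; by condition (iii') the lifted time-one map $\widetilde{f}_1$ is fixed-point free, and since $\dom(I)$ is an open surface, each connected component of $\widetilde{\dom}(I)$ is homeomorphic to $\R^2$. I would therefore be reduced to studying an orientation-preserving, fixed-point-free homeomorphism of the plane which commutes with every deck transformation $T$ of the covering. Let $G$ denote the deck group.

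The next step would be to construct a $G$-equivariant free Brick decomposition of $\widetilde{\dom}(I)$: a locally finite covering by closed topological discs meeting two at a time along boundary arcs and three at a time at vertices, invariant under $G$, and satisfying $B \cap \widetilde{f}_1(B) = \emptyset$ for every brick $B$. Freeness can be achieved because $\widetilde{f}_1$ has no fixed points and $G$ acts properly discontinuously, so bricks can be chosen uniformly small inside a fundamental domain and propagated equivariantly. On the set of bricks I would define the preorder $B \preceq B'$ iff there exists a chain $B = B_0, \ldots, B_n = B'$ with $\widetilde{f}_1(B_i) \cap B_{i+1} \neq \emptyset$. A Brouwer-type argument (a closed chain would force a contractible periodic cycle and, by Brouwer's translation lemma on the plane, a fixed point of $\widetilde{f}_1$) shows that $\preceq$ is a genuine partial order, and it is manifestly $G$-invariant.

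Then I would produce the foliation from this combinatorial data by the Sauzet--Le Calvez construction: orient each edge of the decomposition using the partial order (``past'' bricks on one side, ``future'' bricks on the other), and glue these oriented edges across each vertex star to obtain an oriented non-singular foliation $\widetilde{\mathcal{F}}$ of $\widetilde{\dom}(I)$. $G$-invariance gives a quotient oriented foliation on $\dom(I)$, which I extend to a singular foliation $\mathcal{F}$ on $M$ by declaring $\sing(\mathcal{F}) = \fix(I)$, so that $\dom(\mathcal{F}) = \dom(I)$. For any $z \in \dom(I)$ and any lift $\widetilde{z}$ the trajectory $\widetilde{I}(\widetilde{z})$ travels through a finite chain of bricks that is strictly increasing for $\preceq$, so it can be homotoped rel endpoints to a path positively transverse to $\widetilde{\mathcal{F}}$; projecting back provides the required transverse representative of $I(z)$. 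Uniqueness up to equivalence follows from the standard fact that two paths sharing their endpoints and positively transverse to a non-singular oriented foliation of a simply connected surface are equivalent through an isotopy supported in a tubular neighborhood of either one, an assertion one proves by successive local straightening inside trivialization charts.

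The main obstacle would be the equivariant Brick decomposition and the partial-order step: one must simultaneously ensure that bricks are small enough for $\widetilde{f}_1$-freeness, that the collection is $G$-invariant and locally finite, and that no cyclic chain appears. Without equivariance the planar version of the theorem yields the foliation almost immediately; with it, the delicate point is organizing all choices inside a $G$-fundamental domain so that they glue consistently along its boundary. Once this is done the construction of $\widetilde{\mathcal{F}}$ and the verification of the transversality of trajectories are essentially formal, as is the passage from $\widetilde{\mathcal{F}}$ down to $\mathcal{F}$.
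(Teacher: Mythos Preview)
The paper does not prove this theorem at all: it is stated as Theorem~\ref{existence transverse foliation} with an explicit citation to \cite{lecalvez1} (Le Calvez's \emph{Une version feuillet\'ee \'equivariante du th\'eor\`eme de translation de Brouwer}), and is then used as a black box throughout the article. So there is nothing in the paper to compare your proposal against.

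That said, your sketch does follow the broad outline of Le Calvez's original argument: lift to the universal covering of the domain, build a $G$-equivariant free brick decomposition, use Brouwer theory to show the induced relation on bricks is a genuine partial order, and assemble the foliation from the oriented $1$-skeleton. A few cautions if you intend to flesh this out. First, the construction of the foliation from the brick decomposition is not simply ``orient each edge and glue'': Le Calvez works with maximal \emph{attractors} (unions of bricks closed under the order) whose frontiers are the Brouwer lines, and obtaining a genuine foliation from this family requires a further desingularization step that is not trivial. Second, your uniqueness claim at the end (two positively transverse paths with the same endpoints on a simply connected surface are equivalent) is correct but needs the observation that such a path is determined, up to equivalence, by the set of leaves it crosses, which in turn is determined by the leaves through its endpoints; ``successive local straightening'' is not quite the right mechanism. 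Third, the equivariant brick decomposition is indeed the delicate point, and Le Calvez handles it with some care; your description of ``choose bricks in a fundamental domain and propagate'' glosses over the boundary-matching issues you yourself flag.
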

We will say that a foliation $\mathcal{F}$ satisfying the conclusion of Theorem \ref{existence transverse foliation} is \textit{transverse} to $I$. Observe that if $\widehat{M}$ is a covering space of $M$ and $\fonc{\widehat{\pi}}{\widehat{M}}{M}$ the covering projection, a foliation $\mathcal{F}$ transverse to a maximal identity isotopy $I$ lifts to a foliation $\widehat{\mathcal{F}}$ transverse to the lifted isotopy $\widehat{I}$. In particular, the foliation $\widetilde{\mathcal{F}}$ on $\widetilde{\dom(\mathcal{F})}$ is non singular which is transverse to the isotopy $\widetilde{I}$. This last property is equivalent to saying that every leaf $\widetilde{\phi}$ of $\widetilde{\mathcal{F}}$ is a Brouwer line of $\widetilde{f}$, that is $\widetilde{f}(\widetilde{\phi})\subset L(\widetilde{\phi})$ and $\widetilde{f}^{-1}(\widetilde{\phi})\subset R(\widetilde{\phi})$, where $ L(\widetilde{\phi})$ and $R(\widetilde{\phi})$ are the left and right connected components of the complement of $\widetilde \phi$, defined so that they are compatible with the orientation of the line.

Given $z\in M$ we will write $I_{\mathcal{F}}^1(z)$ for the class of paths that are positively transverse to $\mathcal{F}$, that join $z$ to $f(z)$ and that are homotopic in $\dom(\mathcal{F})$ to $I(z)$, relative to the endpoints. We will also use the notation $I_{\mathcal{F}}^1(z)$ for every path in this class and we will called it the transverse trajectory of $z$. More generally, for every integer $n\geq 1$ we can define $I^n_{\mathcal{F}}(z)=\prod_{0\leq k<n} I_{\mathcal{F}}^1(f^k(z))$ by concatenation, that is either a transverse path passing through the points $z$, $f(z)$, $\cdots$, $f^n(z)$, or a set of such paths. Futhermore, we define

$$ I^{\N}_{\mathcal{F}}(z)= \prod_{k\in \N} I_{\mathcal{F}}^1(f^k(z)), \quad I^{-\N}_{\mathcal{F}}(z)= \prod_{k\in \N} I_{\mathcal{F}}^1(f^{-k}(z)), \quad I^{\Z}_{\mathcal{F}}(z)= \prod_{k\in \Z} I_{\mathcal{F}}^1(f^k(z)).     $$
The last path will be called the \textit{whole transverse trajectory of $z$}.\\

 Let us state the following result that will be useful later.

\begin{lemma}[\cite{lct}]\label{lemma10LCT}
  Fix $z\in \dom(I)$, an integer $n\geq 1$, and parameterize $I^n_{\mathcal{F}}(z)$ by $[0,1]$. For every $0<a<b<1$, there exists a neighborhood $V$ of $z$ such that for every $z'$ in $V$, the path $I^n_{\mathcal{F}}(z)|_{[a,b]}$ is equivalent to a subpath of $I^n_{\mathcal{F}}(z')$. Moreover, there exists a neighborhood $W$ of $z$ such that for every $z'$ and $z''$ in $W$, the path $I^n_{\mathcal{F}}(z')$ is equivalent to a subpath of $I^{n+2}_{\mathcal{F}}(f^{-1}(z''))$.
\end{lemma}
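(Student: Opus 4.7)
The plan is to prove both statements by combining the local triviality of $\mathcal{F}$ with the continuity of $f$ and its iterates. For the first part, set $\gamma := I^n_{\mathcal{F}}(z)$ and observe that the subpath $\gamma|_{[a,b]}$ is a compact arc inside $\dom(\mathcal{F})$. I would cover it by a finite ordered collection of trivialization charts $(U_1,h_1),\ldots,(U_N,h_N)$ of $\mathcal{F}$, and for each intermediate iterate $f^k(z)$ lying on $\gamma|_{[a,b]}$ fix a short arc of the leaf $\phi_{f^k(z)}$ transverse to $\gamma$ inside one of these charts. By continuity of the iterates $f^0,f^1,\ldots,f^n$, we can choose a neighborhood $V$ of $z$ so small that for every $z'\in V$ and every $k$ the point $f^k(z')$ lies in the same chart as $f^k(z)$, on a nearby leaf. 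Because the transverse representative $I^1_{\mathcal{F}}(f^k(z'))$ of the homotopy class of $I^1(f^k(z'))$ rel endpoints is unique up to equivalence (Theorem \ref{existence transverse foliation}), it must traverse the same sequence of charts leaf-by-leaf as the corresponding portion of $I^1_{\mathcal{F}}(f^k(z))$. Concatenating these matchings over $k$ produces a representative of $I^n_{\mathcal{F}}(z')$ containing a subpath equivalent to $\gamma|_{[a,b]}$.

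For the second part, I would apply the first part to the longer trajectory $I^{n+2}_{\mathcal{F}}(f^{-1}(z))$ parameterized by $[0,1]$, whose distinguished times $0=s_0<s_1<\cdots<s_{n+2}=1$ correspond to the successive iterates $f^{-1}(z),z,f(z),\ldots,f^{n+1}(z)$. Choose $\tau_1\in(0,s_1)$ and $\tau_2\in(s_{n+1},1)$, so that $I^{n+2}_{\mathcal{F}}(f^{-1}(z))|_{[\tau_1,\tau_2]}$ strictly contains the middle portion $I^n_{\mathcal{F}}(z)$ with short transverse buffers on either side. By the first part, there exists a neighborhood $\widetilde{V}$ of $f^{-1}(z)$ such that, for every $w\in\widetilde{V}$, the subpath $I^{n+2}_{\mathcal{F}}(f^{-1}(z))|_{[\tau_1,\tau_2]}$ is equivalent to a subpath of $I^{n+2}_{\mathcal{F}}(w)$. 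Setting $W_1:=f(\widetilde{V})$, for each $z''\in W_1$ this buffered subpath embeds as a subpath of $I^{n+2}_{\mathcal{F}}(f^{-1}(z''))$.

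The remaining step, and the main technical obstacle, is to show that $I^n_{\mathcal{F}}(z')$ itself can be realized as a subpath of $I^{n+2}_{\mathcal{F}}(f^{-1}(z))|_{[\tau_1,\tau_2]}$ for all $z'$ in a small neighborhood of $z$. The key observation is that the initial buffer $I^{n+2}_{\mathcal{F}}(f^{-1}(z))|_{[\tau_1,s_1]}$ is a transverse arc ending at $z$ that lies in a trivialization chart around $z$; for $z'$ close enough to $z$, the point $z'$ sits on a nearby leaf reachable from the entry leaf of this arc in the positive transverse direction. One can therefore deform a representative of $I^{n+2}_{\mathcal{F}}(f^{-1}(z))|_{[\tau_1,\tau_2]}$ inside the chart so that it passes through $z'$ shortly after time $\tau_1$, and symmetrically through $f^n(z')$ shortly before time $\tau_2$; the resulting subpath from $z'$ to $f^n(z')$ is positively transverse to $\mathcal{F}$ and homotopic rel endpoints to $I^n(z')$ in $\dom(\mathcal{F})$, hence equivalent to $I^n_{\mathcal{F}}(z')$ by the uniqueness part of Theorem \ref{existence transverse foliation}. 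Setting $W$ equal to the intersection of $W_1$ with the neighborhood where this deformation is possible completes the proof. The delicate point is ensuring that each local deformation preserves positive transversality; this is precisely what the buffers $\tau_1<s_1$ and $\tau_2>s_{n+1}$ are introduced to make possible.
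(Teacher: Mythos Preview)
The paper does not prove this lemma; it is quoted verbatim from \cite{lct} with no argument supplied, so there is nothing in the present paper to compare your proposal against.

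On its own merits your outline is essentially correct and follows the natural strategy. Two remarks on where it could be tightened. First, the phrase ``traverse the same sequence of charts leaf-by-leaf'' is doing real work and is cleaner if you lift to $\widetilde{\dom}(\mathcal{F})$: there each leaf is a line separating the plane, $\widetilde z$ and $\widetilde f^n(\widetilde z)$ lie on opposite sides of $\phi_{\widetilde\gamma(a)}$ (and of $\phi_{\widetilde\gamma(b)}$), and for $z'$ close to $z$ the same holds for $\widetilde z'$ and $\widetilde f^n(\widetilde z')$, forcing any transverse path between them to cross both leaves in the right order. Second, in your last paragraph the assertion that the deformed subpath from $z'$ to $f^n(z')$ is ``homotopic rel endpoints to $I^n(z')$ in $\dom(\mathcal{F})$'' is exactly the nontrivial point, and invoking Theorem~\ref{existence transverse foliation} alone does not give it: uniqueness there is \emph{within} a fixed homotopy class, not across classes. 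What makes it true is again the lift: in $\widetilde{\dom}(\mathcal{F})$ two transverse paths with the same initial and final leaves are automatically equivalent, and your buffers ensure the lifted buffered path meets $\phi_{\widetilde z'}$ and $\phi_{\widetilde f^n(\widetilde z')}$ for $z'$ near $z$. With these clarifications the argument goes through.
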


\subsubsection{Admissible paths}

We will say that a transverse path $\fonc{\gamma}{[a,b]}{\dom(I)}$  is \textit{admissible of order $n$} ($n\geq 1$ is an integer) if it is equivalent to a path $I^{n}_{\mathcal{F}}(z)$, $z$ in $\dom(I)$. It means that if $\fonc{\widetilde{\gamma}}{[a,b]}{\widetilde{\dom}(I)}$ is a lift of $\gamma$, there exists a point $\widetilde{z}$ in $\widetilde{\dom}(I)$ such that $\widetilde{z}\in \phi_{\widetilde{\gamma}(a)}$ and $\widetilde{f}^n(\widetilde{z})\in \phi_{\widetilde{\gamma}(b)}$, or equivalently, that
$$    \widetilde{f}^n(\phi_{\widetilde{\gamma}(a)})\cap \phi_{\widetilde{\gamma}(b)} \neq \emptyset.  $$

The fundamental proposition (Proposition 20 from \cite{lct}) is a result about maximal isotopies and transverse foliations that permits us to construct new admissible paths from a pair of admissible paths.

\begin{prop}[\cite{lct}]\label{proposition20LCT}
  Suppose that $\fonc{\gamma_1}{[a_1,b_1]}{M}$ and $\fonc{\gamma_2}{[a_2,b_2]}{M}$ are two transverse paths that intersect $\mathcal{F}$-transversally at
  $\gamma_1(t_1)=\gamma_2(t_2)$. If $\gamma_1$ is admissible of order $n_1$ and $\gamma_2$ is admissible of order $n_2$, then the paths
  $\gamma_1|_{[a_1,t_1]}\gamma_2|_{[t_2,b_2]}$ and $\gamma_2|_{[a_2,t_2]}\gamma_1|_{[t_1,b_1]}$ are admissible of order $n_1+n_2$.
\end{prop}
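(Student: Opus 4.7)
The plan is to work entirely in the universal cover $\widetilde{\dom}(\mathcal{F})$, where the lifted foliation $\widetilde{\mathcal{F}}$ is nonsingular and each leaf is a Brouwer line for $\widetilde{f}$. First I would choose lifts $\widetilde{\gamma}_1,\widetilde{\gamma}_2$ with $\widetilde{\gamma}_1(t_1)=\widetilde{\gamma}_2(t_2)=:\widetilde{z}_0$, and set $\widetilde{\phi}_0=\phi_{\widetilde{z}_0}$, together with the four ``boundary'' leaves $\widetilde{\phi}_1^-=\phi_{\widetilde{\gamma}_1(a_1)}$, $\widetilde{\phi}_1^+=\phi_{\widetilde{\gamma}_1(b_1)}$, $\widetilde{\phi}_2^-=\phi_{\widetilde{\gamma}_2(a_2)}$, $\widetilde{\phi}_2^+=\phi_{\widetilde{\gamma}_2(b_2)}$. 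Translating the admissibility hypotheses gives points $\widetilde{w}_1\in\widetilde{\phi}_1^-$ with $\widetilde{f}^{\,n_1}(\widetilde{w}_1)\in\widetilde{\phi}_1^+$, and $\widetilde{w}_2\in\widetilde{\phi}_2^-$ with $\widetilde{f}^{\,n_2}(\widetilde{w}_2)\in\widetilde{\phi}_2^+$. The goal becomes showing $\widetilde{f}^{\,n_1}(\widetilde{\phi}_1^-)\cap\widetilde{f}^{-n_2}(\widetilde{\phi}_2^+)\neq\emptyset$, which after applying $\widetilde{f}^{\,n_2}$ is exactly the admissibility of order $n_1+n_2$ of the concatenation $\gamma_1|_{[a_1,t_1]}\gamma_2|_{[t_2,b_2]}$.

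Next I would unpack the $\mathcal{F}$-transverse intersection condition at $\widetilde{z}_0$ to extract a geometric picture: the initial segments $\widetilde{\gamma}_1|_{[a_1,t_1]}$ and $\widetilde{\gamma}_2|_{[a_2,t_2]}$ arrive at $\widetilde{\phi}_0$ from leaves lying on opposite sides of one another (relative to $\widetilde{\phi}_0$), and likewise for the outgoing segments $\widetilde{\gamma}_1|_{[t_1,b_1]}$, $\widetilde{\gamma}_2|_{[t_2,b_2]}$. Because every leaf is a Brouwer line, the curve $C_1=\widetilde{f}^{\,n_1}(\widetilde{\phi}_1^-)$ lies in $L(\widetilde{\phi}_1^-)$ and crosses $\widetilde{\phi}_1^+$, while $C_2=\widetilde{f}^{-n_2}(\widetilde{\phi}_2^+)$ lies in $R(\widetilde{\phi}_2^+)$ and crosses $\widetilde{\phi}_2^-$. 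Using the positional data coming from the transverse intersection, I would then argue that $C_1$ and $C_2$ enter a common ``rectangle''-type region bounded by $\widetilde{\phi}_1^-,\widetilde{\phi}_1^+,\widetilde{\phi}_2^-,\widetilde{\phi}_2^+$ and $\widetilde{\phi}_0$, and that one curve passes through the ``incoming'' side of the other while exiting through the ``outgoing'' side, forcing them to intersect topologically.

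The rigorous version of this crossing argument is the main obstacle. To carry it out I would appeal to Lemma \ref{lemma10LCT} to perturb $\widetilde{w}_1,\widetilde{w}_2$ slightly while preserving the transverse trajectories, so that neither $C_1$ nor $C_2$ is tangent to any of the reference leaves; then combine (i) the fact that the iterated leaves $\widetilde{f}^k(\widetilde{\phi}_i^{\pm})$ remain Brouwer-line-monotone, (ii) the relative-position inequalities from the $\mathcal{F}$-transversality, and (iii) the plane separation theorem applied to the configuration of the five leaves above, to conclude that $C_1\cap C_2\neq\emptyset$. Any point $\widetilde{p}$ in this intersection yields $\widetilde{w}=\widetilde{f}^{-n_1}(\widetilde{p})\in\widetilde{\phi}_1^-$ with $\widetilde{f}^{\,n_1+n_2}(\widetilde{w})\in\widetilde{\phi}_2^+$, proving admissibility of order $n_1+n_2$.

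Finally, the statement for the other concatenation $\gamma_2|_{[a_2,t_2]}\gamma_1|_{[t_1,b_1]}$ follows by the symmetric argument, exchanging the roles of $\gamma_1$ and $\gamma_2$ (which also exchanges the sense of the $\mathcal{F}$-transverse intersection). I would make this symmetry explicit at the very end to avoid repeating the planar crossing argument.
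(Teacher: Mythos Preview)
The paper does not give its own proof of this proposition: it is quoted from \cite{lct} and used as a black box throughout. So there is nothing in the present paper to compare your sketch against.

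That said, your outline is essentially the strategy used in \cite{lct}: lift to $\widetilde{\dom}(\mathcal{F})$, translate admissibility into the existence of points on the boundary leaves whose $\widetilde{f}$-iterates land on the opposite boundary leaves, and then use the Brouwer-line property of the leaves together with the above/below relations encoded in the $\mathcal{F}$-transverse intersection to force $\widetilde{f}^{\,n_1}(\widetilde{\phi}_1^-)$ and $\widetilde{f}^{-n_2}(\widetilde{\phi}_2^+)$ to intersect. One correction: your appeal to Lemma~\ref{lemma10LCT} to ``perturb $\widetilde{w}_1,\widetilde{w}_2$ so that neither $C_1$ nor $C_2$ is tangent to the reference leaves'' is both unnecessary and misdirected. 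Lemma~\ref{lemma10LCT} compares transverse trajectories of nearby points; it says nothing about the geometry of iterated leaves, and in any case the crossing argument is purely topological and needs no genericity. In \cite{lct} one argues directly that, since $\widetilde{f}^{\,n_1}(\widetilde{\phi}_1^-)$ is a line contained in $L(\widetilde{\phi}_1^-)$ meeting $\widetilde{\phi}_1^+$, the closed half-plane $\widetilde{f}^{\,n_1}(\overline{R(\widetilde{\phi}_1^-)})$ contains $\widetilde{\phi}_1^+$; the above/below relations from the transverse intersection then force $\widetilde{f}^{\,n_1}(\widetilde{\phi}_1^-)$ to separate $\widetilde{\phi}_2^-$ from $\widetilde{\phi}_2^+$, so it must meet any connected set joining them, in particular $\widetilde{f}^{-n_2}(\overline{L(\widetilde{\phi}_2^+)})\supset\widetilde{\phi}_2^-$. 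No perturbation is involved.
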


One deduces immediately the following result.

\begin{lemma}[\cite{lct}]\label{corollary22LCT}
  Let $\fonc{\gamma_i}{[a_i,b_i]}{M}$, $1\leq i \leq r$, be a family of $r\geq 2$ transverse paths. We suppose that for every $i\in \{1,\cdots, r\}$ there exist $s_i\in [a_i,b_i]$ and $t_i\in [a_i,b_i]$ such that:
  \begin{itemize}
    \item[(i)] $\gamma_i|_{[s_i,b_i]}$ and $\gamma_{i+1}|_{[a_{i+1},t_{i+1}]}$ intersect $\mathcal{F}$-transversally at $\gamma_i(t_i)=\gamma_{i+1}(s_{i+1})$ if $i<r$;
    \item[(ii)] one has $s_1=a_1<t_1<b_1$, $a_r<s_r<t_r=b_r$ and $a_i<s_i<t_i<b_i$ if $1<i<r$;
    \item[(iii)] $\gamma_i$ is admissible of order $n_i$.
  \end{itemize}
Then $\prod_{1\leq i \leq r} \gamma_i|_{[s_i,t_i]}$ is admissible of order $\sum_{1\leq i \leq r} n_i$.
\end{lemma}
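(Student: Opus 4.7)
The plan is to prove the lemma by induction on $r \geq 2$, with Proposition \ref{proposition20LCT} doing all the real work.

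The base case $r=2$ specializes the hypotheses to two transverse paths $\gamma_1$, $\gamma_2$, admissible of orders $n_1$ and $n_2$, intersecting $\mathcal{F}$-transversally at $\gamma_1(t_1)=\gamma_2(s_2)$, with the boundary conditions $s_1=a_1$ and $t_2=b_2$. The concatenation $\gamma_1|_{[s_1,t_1]}\gamma_2|_{[s_2,t_2]}$ is then exactly $\gamma_1|_{[a_1,t_1]}\gamma_2|_{[s_2,b_2]}$, whose admissibility of order $n_1+n_2$ is the content of Proposition \ref{proposition20LCT} applied with intersection parameter $t_1$ on $\gamma_1$ and $s_2$ on $\gamma_2$.

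For the inductive step, assuming the statement for families of $r-1$ paths, I first apply Proposition \ref{proposition20LCT} to $\gamma_1$ and $\gamma_2$ at their intersection $\gamma_1(t_1)=\gamma_2(s_2)$ to obtain an admissible path $\delta := \gamma_1|_{[a_1,t_1]}\gamma_2|_{[s_2,b_2]}$ of order $n_1+n_2$. The key observation is that the next intersection $\gamma_2(t_2)=\gamma_3(s_3)$ occurs at a parameter $t_2 \in (s_2,b_2)$ by condition (ii), hence lies in the interior of the $\gamma_2$-portion of $\delta$; therefore $\delta$ inherits from $\gamma_2$ an $\mathcal{F}$-transverse intersection with $\gamma_3$ at this point, and this intersection is interior to $\delta$. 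I then apply the inductive hypothesis to the family of $r-1$ paths $(\delta, \gamma_3, \ldots, \gamma_r)$, taking as new parameters the initial point of $\delta$ (as $s^{\mathrm{new}}_1$) and the $\delta$-parameter corresponding to $\gamma_2(t_2)$ (as $t^{\mathrm{new}}_1$), and keeping $(s_i, t_i)$ for $i \geq 3$. This produces exactly the concatenation $\gamma_1|_{[s_1,t_1]}\gamma_2|_{[s_2,t_2]}\gamma_3|_{[s_3,t_3]}\cdots\gamma_r|_{[s_r,t_r]}$, admissible of order $(n_1+n_2)+n_3+\cdots+n_r = \sum_{i=1}^{r} n_i$.

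The one genuine subtlety, and the main place I would expect to have to be careful, is that Proposition \ref{proposition20LCT} outputs an \emph{admissible} path (equivalent to a transverse path) rather than a literally transverse path at the concatenation junction. To feed $\delta$ back into Proposition \ref{proposition20LCT} via the inductive hypothesis, I would replace it by an equivalent, genuine transverse path. Since $\mathcal{F}$-transverse intersections are defined through the leaves $\phi_{\widetilde{\gamma}(\cdot)}$ traversed by the path and so depend only on its equivalence class, this replacement preserves the $\mathcal{F}$-transverse intersection of $\delta$ with $\gamma_3$, and the induction closes without further issues.
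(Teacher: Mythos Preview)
Your proof is correct and is exactly the argument the paper has in mind: the paper does not spell out a proof but simply states ``One deduces immediately the following result'' right after Proposition~\ref{proposition20LCT}, and induction on $r$ with Proposition~\ref{proposition20LCT} supplying both the base case and the merging step is precisely how one deduces it. Your remark about replacing the concatenation by an equivalent genuinely transverse path is the only point that needs care, and you handle it correctly.
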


The following result is a consequence of Proposition 23 from \cite{lct}.

\begin{coro}[\cite{lct}]\label{corollary24LCT}
  Let $\fonc{\gamma}{[a,b]}{M}$ be a transverse path admissible of order $n$. Then there exists $\fonc{\gamma'}{[a,b]}{M}$ a transverse path, also admissible of order $n$, such that $\gamma'$ has no $\mathcal{F}$-transverse self-intersection and $\phi_{\gamma(a)}=\phi_{\gamma'(a)}$, $\phi_{\gamma(b)}=\phi_{\gamma'(b)}$.
\end{coro}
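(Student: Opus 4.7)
The plan is to proceed by an iterative shortcut argument built on Proposition 23 from \cite{lct}. If $\gamma$ has an $\mathcal{F}$-transverse self-intersection, lifting $\gamma$ to $\widetilde{\dom}(\mathcal{F})$ produces a transverse path $\widetilde{\gamma}$ of $\widetilde{\mathcal{F}}$ and a covering automorphism $T$ such that $\widetilde{\gamma}$ and $T\widetilde{\gamma}$ meet $\widetilde{\mathcal{F}}$-transversally at some point $\widetilde{\gamma}(t_1) = T\widetilde{\gamma}(t_2)$. First I would invoke Proposition 23, which in this situation should produce a new transverse path $\gamma_1$ that is still admissible of order $n$ and whose initial and final leaves coincide with $\phi_{\gamma(a)}$ and $\phi_{\gamma(b)}$. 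The gain over a direct application of Proposition \ref{proposition20LCT} (which would double the order to $2n$) comes precisely from the fact that one is shortcutting a path against a translate of itself rather than concatenating two a priori unrelated admissible paths.

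Then I would iterate the shortcut procedure, producing a sequence of transverse paths $\gamma, \gamma_1, \gamma_2, \ldots$, each admissible of order $n$ and each starting on $\phi_{\gamma(a)}$ and ending on $\phi_{\gamma(b)}$. The desired $\gamma'$ is then any term of this sequence that has no $\mathcal{F}$-transverse self-intersection.

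The main obstacle is establishing termination of the shortcut process. The cleanest way to set this up is not to iterate naively but to work inside the collection of transverse paths admissible of order $n$ with initial leaf $\phi_{\gamma(a)}$ and final leaf $\phi_{\gamma(b)}$, and to select an element $\gamma'$ that minimizes an appropriate well-founded complexity, for instance the number of orbits, under the covering group of $\dom(\mathcal{F})$, of $\widetilde{\mathcal{F}}$-transverse intersection points between a fixed lift $\widetilde{\gamma'}$ and its translates. Proposition 23 has to be set up so that each shortcut strictly decreases this complexity without creating new self-intersections; once this is granted, a minimizer cannot have any $\mathcal{F}$-transverse self-intersection, for otherwise one further application of Proposition 23 would produce a strictly better element, contradicting minimality. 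Thus the delicate geometric content of Corollary \ref{corollary24LCT} is entirely packaged in Proposition 23 from \cite{lct}, and the corollary itself follows from a short minimality argument.
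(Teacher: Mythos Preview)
Your sketch is essentially correct and matches the intended derivation: the paper itself does not prove this statement but simply records it as ``a consequence of Proposition 23 from \cite{lct}'', so there is no independent proof here to compare against. The actual argument in \cite{lct} is indeed the shortcut-plus-termination scheme you describe, with Proposition 23 supplying the single shortcut step (same order $n$, same endpoint leaves, strictly fewer self-intersections in an appropriate count) and the corollary following by iterating until no $\mathcal{F}$-transverse self-intersection remains.

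One small caution: your phrasing of the complexity measure (``number of orbits, under the covering group, of $\widetilde{\mathcal{F}}$-transverse intersection points'') is slightly loose; in \cite{lct} the relevant quantity that strictly decreases is tied to the combinatorics of the leaves crossed by the path (effectively a length in the leaf space), and one should check that this is finite to begin with and that the shortcut genuinely lowers it. But this is exactly the content of Proposition 23, so your identification of where the work lies is accurate.
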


\subsection{Forcing theory results}

The presence of topological horseshoes have been a paradigmatic feature of dynamical systems, and its prevalence as a phenomena is well-known. In the article \cite{newlct}, which is a natural continuation of the article \cite{lct}, the authors develop a new criteria for the existence of topological horseshoes for surface homeomorphisms in the isotopy class of the identity based on the notions of maximal isotopies, transverse trajectories and traverse trajectories. The fundamental results of \cite{newlct} is that the existence of an admissible path with a $\mathcal{F}$-self-intersection implies the existence of a horseshoe.

Moreover, into of the proof of the main theorem from \cite{newlct}, one obtains the following results on the existence of elements of the rotation set or of compact sets with prescribed rotation numbers. In our setting, let consider $f$ a homeomorphism of the closed annulus $\Ac:=\T{1}\times [0,1]$ which is isotopic to the identity. Let $\widehat{\Ac}:=\R\times [0,1]$ be the universal covering of $\Ac$ and let $\widehat{f}$ be a lift of $f$ to $\widehat{\Ac}$. Let $I'$ be an identity isotopy of $f$, such that its lift to $\widehat{\Ac}$ is an identity isotopy of $\widehat{f}$. Let $I$ be a maximal identity isotopy of $f$ larger than $I'$ and let $\mathcal{F}$ be a singular foliation transverse to $I$. Let $\widehat{\mathcal{F}}$ be the lift of $\mathcal{F}$ to $\widehat{\Ac}$. The following result is a direct consequences of Theorem M of \cite{newlct}.

\begin{prop}
%[\cite{newlct}]
\label{pr:newperiodicforcing2}
   Suppose that there exists an admissible transverse path $\widehat{\gamma}:[a,b]\to \widehat{\Ac}$ of order $q\geq 1$ and an integer $p\in\Z$ such that $\widehat{\gamma}$ and $\widehat{\gamma}+(p,0)$ intersect $\widehat{\mathcal{F}}$-transversally at $\phi_{\widehat{\gamma}(t)}=\phi_{(\widehat{\gamma}+(p,0))(s)}$, with $a<s<t<b$. Then for any $0<\theta \leq p/q$, there exists a nonempty compact subset $Q_\theta$ of $\Ac$ such that, for each $z\in Q_\theta$, one has that $\rot(\widehat{f},z)=\theta$.
\end{prop}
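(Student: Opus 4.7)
The plan is to derive this proposition from Theorem~M of \cite{newlct}, specializing its conclusion to our setting. The assumed $\widehat{\mathcal{F}}$-transverse self-intersection of $\widehat{\gamma}$ with its translate $\widehat{\gamma}+(p,0)$ is precisely the kind of input the forcing theory feeds on. My first step is to iterate Corollary~\ref{corollary22LCT} with the family $(\widehat{\gamma}+((i-1)p,0))_{1\leq i\leq k}$: each translate is admissible of order $q$, and any two consecutive translates intersect $\widehat{\mathcal{F}}$-transversally at parameters $s,t$ with $a<s<t<b$, so hypotheses (i)--(iii) of the corollary are satisfied by $\Z$-equivariance under horizontal translation. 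This produces, for every integer $k\geq 1$, an admissible transverse path $\widehat{\Gamma}_k$ of order $kq$ whose initial and terminal points differ by $((k-1)p,0)$ up to a bounded error, hence realizing rotation rate $(k-1)p/(kq)\to p/q$.

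To realize an arbitrary rational $\theta=p'/q'\in(0,p/q]$ (written in lowest terms), I would prescribe a suitable combinatorics of translates of $\widehat{\gamma}$ that closes up: combining $p'$ copies of the basic cycle above, I can produce via Proposition~\ref{proposition20LCT} an admissible transverse path of order $qq'$ whose endpoints differ by $(pp',0)$. Closing this path and invoking the fixed-point-free property of the lifted isotopy (condition (iii') of the maximal-isotopy theorem) produces a periodic orbit of period $q'$ with rotation number exactly $\theta$. For irrational $\theta$, I would approximate $\theta$ by rationals $\theta_n\to\theta$ and extract a nested intersection of the compact invariant sets associated to longer and longer admissible paths realizing $\theta_n$; equivalently, one can appeal directly to the horseshoe produced by Theorem~M of \cite{newlct}, in which every orbit inherits a coding whose rotation number is determined by the asymptotic frequency of a distinguished ``cycle'' symbol. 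Selecting the closed invariant subshift of codes whose frequency equals $\theta q/p$ furnishes $Q_\theta$.

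The main obstacle is this last point: guaranteeing that \emph{every} point of $Q_\theta$, not merely a.e.\ point for some ergodic measure, has rotation number exactly $\theta$. Handel-type arguments provide rotation numbers only in an ergodic sense, so the improvement rests on the fine structure supplied by Theorem~M of \cite{newlct} --- a compact invariant set semi-conjugate to a subshift of finite type, together with uniform rotational control inherited from the paths $\widehat{\Gamma}_k$. The $\widehat{\mathcal{F}}$-transversality of the intersection, as opposed to a mere topological intersection, is precisely what ensures that this semi-conjugacy is well-defined, that the preimage of a frequency-prescribed subshift is compact and invariant, and that codes translate into uniform asymptotic horizontal displacement along every orbit.
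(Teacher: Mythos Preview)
The paper gives no proof of this proposition beyond the sentence ``The following result is a direct consequence of Theorem~M of \cite{newlct}.'' Your proposal correctly identifies Theorem~M as the source, so at the level of what the paper actually does, you are aligned with it.

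Your further elaboration, however, is both unnecessary and in places imprecise. In particular, your rational-case combinatorics is off: concatenating $p'$ copies of a cycle that advances by $(p,0)$ over $q$ iterates yields an admissible path of order $p'q$ with endpoints differing by roughly $(p'p,0)$, hence rotation rate $p/q$, not $p'/q'$; to hit an arbitrary $\theta\in(0,p/q]$ one must instead mix ``advancing'' segments with ``non-advancing'' ones in the right proportion, which is exactly what the horseshoe coding in Theorem~M accomplishes. Your final paragraph correctly diagnoses the real content --- uniform rotational control on every point of $Q_\theta$, not merely almost every point --- and rightly attributes this to the semiconjugacy with a subshift provided by Theorem~M. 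Since the paper simply invokes Theorem~M without reproving it, the cleanest version of your write-up would do the same and drop the speculative intermediate constructions.
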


\subsection{Rotation set of annular homeomorphisms}

\subsubsection{Rotation set}

We will denote by $\T{1}:=\R/\Z$ the circle and by $\Ac:=\T{1}\times [0,1]$ the closed annulus. We endow the annulus $\Ac$ with its usual topology and orientation. Let $\fonc{\widehat{\pi}}{\widehat{\Ac}=\R\times [0,1]}{\Ac}$ be the universal covering map of $\Ac$ defined by $\widehat{\pi}(x,y)=(x+\Z,y)$. Let $f$ be a homeomorphism of $\Ac$ that is isotopic to the identity (that is $f$ preserves the orientation and the boundary components) and let $\widehat{f}$ be a lift of $f$ to $\widehat{\Ac}$, i.e ($f\circ \widehat{\pi}=\widehat{\pi}\circ \widehat{f}$). We can define the \textit{displacement function} $\fonc{\rho_1}{\Ac}{\R}$ as
$$ \rho_1(x+\Z,y)=p_1(\widehat{f}(x,y))- x,    $$
where $\fonc{p_1}{\widehat{\Ac}}{\R}$ is the projection on the first coordinate and $(x,y)\in \widehat{\pi}^{-1}(x+\Z,y)$. Let $X\subset \Ac$ be a compact $f$-invariant set. We will denote by  $\mathcal{M}_f(\Ac,X)$ the set of all $f$-invariant Borel probability measures supported in $X$. If $\mu$ is in $\mathcal{M}_f(\Ac,X)$, we define its \textit{rotation number} as $$ \rot(\widehat{f},\mu):= \int_{\Ac} \rho_1\, d\mu.   $$
Then we define the \textit{rotation set of $\widehat{f}$ in $X$} as
 $$ \rot(\widehat{f},X):=\{\rot(\widehat{f},\mu): \mu \in \mathcal{M}_f(\Ac,X) \}.   $$
\begin{rema}
  For every $p\in\Z$ and every $q\in\Z$, the map $\widehat{f}^q+(p,0)$ is a lift of $f^q$ and we have  $\rot(\widehat{f}^q+(p,0),X)=q\rot(\widehat{f},X)+p$.
\end{rema}

For every measure $\mu$ in $\mathcal{M}_f(\Ac,X)$, by the Ergodic Decomposition Theorem, there is a unique Borel probability measure $\tau$ on $\mathcal{M}_f(\Ac,X)$ supported in $\mathcal{M}^{e}_f(\Ac,X)$ (the set of all ergodic measure in $\mathcal{M}_f(\Ac,X)$) such that for every continuous function $\fonc{\varphi}{{\Ac}}{\R}$ we have
$$\int_{\Ac} \varphi\,d\mu= \int_{\mathcal{M}^{e}_f(\Ac,X)} \left( \int_{\Ac} \varphi\,d\nu \right) \, d\tau(\nu). $$
Hence if $\rot(\widehat{f},\mu)$ is an endpoint of $\rot(\widehat{f},X)$, then $\tau$ almost all ergodic measures $\nu$ that appear in the ergodic decomposition of $\mu$ have a well defined rotation number, which is equal to $\rot(\widehat{f},\mu)$. Moreover, if $\nu$ in $\mathcal{M}_f(\Ac,X)$ is ergodic, then by Birkhoff Ergodic Theorem $\nu$-almost every point $z$ has a rotation number well-defined and it is equal to the rotation number of $\nu$, i.e. for a lift $\hat{z}$ of $z$ the limit of the sequence $\left((p_1(\widehat{f}^n(\widehat{z}))-p_1(\widehat{z}))/n\right)_{n\in\N}$ exists which will be denoted by $\rot(\widehat{f},z)$, and we have

$$ \rot(\widehat{f},z)= \lim_{n\to +\infty} \frac{p_1(\widehat{f}^n(\widehat{z}))-p_1(\widehat{z})}{n}= \lim_{n \to +\infty} \frac{1}{n} \sum_{k=0}^{n-1} \rho_1(f^k(z))=\rot(\widehat{f},\nu). $$

If $X=\Ac$, we denoted the rotation set of $\widehat{f}$ by $\rot(\widehat{f})$ instead $\rot(\widehat{f},\Ac)$. We have the following theorem, which can be deduced from \cite{franksannals1}.

\begin{theo}[Frank's Theorem]\label{frankstheorem}
  Let $\fonc{f}{\Ac}{\Ac}$ be a homeomorphism of $\Ac$ which is isotopic to the identity. Suppose that $f$ preserves a Borel probability measure of full support. Let $\widehat{f}$ be a lift of $f$ to $\R\times [0,1]$. Then for every rational number $r/s$, written in an irreducible way, in the interior of the rotation set of $\widehat{f}$ there exists a point $\widehat{z}\in \R\times [0,1]$ such that $\widehat{f}^s(\widehat{z})=\widehat{z}+(r,0)$.
\end{theo}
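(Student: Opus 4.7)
The plan is to reduce Frank's Theorem to a fixed-point question for an auxiliary lift and then invoke the classical fixed-point result of \cite{franksannals1}.

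Set $\widehat{g}:=\widehat{f}^s+(-r,0)$, which is a lift of $g:=f^s$. Producing $\widehat{z}$ with $\widehat{f}^s(\widehat{z})=\widehat{z}+(r,0)$ is then equivalent to producing a fixed point of $\widehat{g}$ in $\widehat{\Ac}=\R\times[0,1]$. By the remark immediately following the definition of the rotation set we have $\rot(\widehat{g})=s\rot(\widehat{f})-r$, so the hypothesis that $r/s$ lies in the interior of $\rot(\widehat{f})$ becomes $0\in\inte(\rot(\widehat{g}))$. Observe also that the measure $\mu$, being $f$-invariant with full support, is automatically $g$-invariant with full support, so every point of $\Ac$ is non-wandering (in fact chain recurrent) under $g$.

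Because $0$ is an interior point of $\rot(\widehat{g})$, one can pick $g$-invariant Borel probability measures $\mu^+,\mu^-$ whose rotation numbers for $\widehat{g}$ are strictly positive and strictly negative. Applying the Ergodic Decomposition Theorem together with Birkhoff's Ergodic Theorem, exactly in the manner outlined in the subsection preceding this statement, one extracts ergodic components $\nu^\pm$ and then $\nu^\pm$-generic points $z^+,z^-\in\Ac$ whose rotation numbers for $\widehat{g}$ exist and are strictly positive, respectively strictly negative.

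The conclusion now follows from Franks' fixed-point theorem \cite{franksannals1}: if a homeomorphism $g$ of the closed annulus isotopic to the identity is chain recurrent, and if a lift $\widehat{g}$ possesses a point with positive rotation number together with a point with negative rotation number, then $\widehat{g}$ has a fixed point in $\widehat{\Ac}$. Applied to our $\widehat{g}$, this yields the required $\widehat{z}$ satisfying $\widehat{g}(\widehat{z})=\widehat{z}$, i.e. $\widehat{f}^s(\widehat{z})=\widehat{z}+(r,0)$.

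The main obstacle is verifying that Franks' theorem, which in its original form is commonly phrased on the open annulus or under an area-preservation hypothesis, is applicable here. Two routes are available. One can restrict $g$ to the invariant open sub-annulus $\A$, which remains chain recurrent by the full-support hypothesis, and ensure that $z^+,z^-$ can be chosen inside $\A$ by using ergodicity of $\nu^\pm$ (the boundary circles $\T{1}\times\{0\}$ and $\T{1}\times\{1\}$ each carry at most one ergodic component, so at worst one $\nu^\pm$ sits on a boundary, in which case the corresponding rotation number coincides with that of a boundary rotation and still lies on the appropriate side of $0$); alternatively, one appeals to the closed-annulus version of Franks' Poincar\'e--Birkhoff-type theorem, where chain recurrence alone is the recurrence hypothesis. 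In both cases the full-support assumption on $\mu$ provides precisely the recurrence required.
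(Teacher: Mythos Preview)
The paper does not supply its own proof of this statement; it is stated as a known result ``which can be deduced from \cite{franksannals1}'' and left at that. Your reduction---passing to the lift $\widehat{g}=\widehat{f}^s-(r,0)$, using the ergodic decomposition to produce points with rotation numbers of opposite sign for $\widehat{g}$, and then invoking Franks' chain-recurrent Poincar\'e--Birkhoff fixed-point theorem---is precisely the standard way to carry out that deduction, and it is correct.
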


\subsubsection{Dynamics near a boundary component with positive rotation number}

We recall the local dynamics of a homeomorphism of the close annulus near to a boundary component of $\Ac$ with positive rotation number. Let us consider a homeomorphism $f$ of the closed annulus which is isopotic to the identity. Let $\widehat{f}$ be a lift of $f$ to $\R\times [0,1]$. We will suppose that the rotation number of the lower boundary component of $\Ac$,
$$\rho_0:=\lim_{n\to +\infty}\frac{p_1(\widehat{f}^n(x,0))-x}{n},$$
where $x\in\R$, is positive. Therefore $\widehat f|_{\R\times \{0\}}$ has no fixed point, and so we can consider
$$ m:= \inf_{\widehat{z} \in \R\times \{0\}  } (p_1(\widehat{f}(\widehat{z}))-p_1(\widehat{z}))>0.$$
We deduce the next result.

\begin{lemma}\label{lm: lemma29}
  There exists a real number $\delta>0$ such that for every $\widehat{z}\in \R\times [0,\delta]$ we have
   $$ \frac{m}{2}< p_1(\widehat{f}(\widehat{z}))-p_1(\widehat{z}).  $$
\end{lemma}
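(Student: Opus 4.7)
The plan is to exploit the continuity of the displacement function together with the compactness of the annulus. First I would recall that the map
$$\rho_1(x+\Z,y)=p_1(\widehat f(x,y))-x$$
is well-defined on the compact annulus $\Ac$ and continuous there, because the lift $\widehat f$ commutes with the deck transformation $(x,y)\mapsto(x+1,y)$, so the displacement is $\Z$-periodic in $x$. In particular $\rho_1$ is uniformly continuous on $\Ac$.

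Next, I would observe that restricting $\rho_1$ to the compact boundary circle $\T{1}\times\{0\}$, the infimum in the definition of $m$ is actually a minimum, since a continuous function on a compact set attains its infimum; in particular $\rho_1(z)\ge m>0$ for every $z\in\T{1}\times\{0\}$.

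Then I would use a standard compactness/continuity argument to find $\delta$. Since $\rho_1$ is continuous on $\Ac$ and $\rho_1>m/2$ on the compact set $\T{1}\times\{0\}$, the open set $U=\{z\in\Ac:\rho_1(z)>m/2\}$ contains $\T{1}\times\{0\}$. Because $\T{1}\times\{0\}$ is compact and $\Ac$ is a metrizable space in which $\{\T{1}\times[0,\delta]\}_{\delta>0}$ forms a neighborhood basis of $\T{1}\times\{0\}$, there exists $\delta>0$ such that $\T{1}\times[0,\delta]\subset U$. (Alternatively, argue by contradiction: if not, one could pick $z_n=(x_n+\Z,y_n)$ with $y_n\to 0$ and $\rho_1(z_n)\le m/2$; extracting a convergent subsequence in $\Ac$ gives a limit point in $\T{1}\times\{0\}$ at which $\rho_1\le m/2$, contradicting the minimum on the boundary.)

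Finally I would lift this conclusion back to $\R\times[0,1]$: for any $\widehat z=(x,y)\in\R\times[0,\delta]$, we have $\widehat\pi(\widehat z)\in\T{1}\times[0,\delta]$, so
$$p_1(\widehat f(\widehat z))-p_1(\widehat z)=\rho_1(\widehat\pi(\widehat z))>\frac{m}{2},$$
which is the desired inequality. There is no real obstacle in this proof; the only point worth being careful about is the $\Z$-periodicity that allows the infimum over the non-compact line $\R\times\{0\}$ to be realized as a minimum over the compact circle, which is what makes $m>0$ a strict lower bound and permits the continuity argument.
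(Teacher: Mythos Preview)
Your proof is correct and is exactly the routine continuity--compactness argument the paper has in mind; the paper itself states the lemma without proof (``We deduce the next result''), so you have simply written out the obvious details.
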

We deduce the following result.
\begin{coro}\label{Co: corollary210}
  For every real number $M>0$, there exist a real number $\delta>0$ and an integer $n\geq 1$ such that for every $\hat{z}\in \R\times [0,\delta]$ we have
   $$ M< p_1(\widehat{f}^n(\widehat{z}))-p_1(\widehat{z}).  $$
\end{coro}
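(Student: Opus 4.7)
The plan is to bootstrap Lemma \ref{lm: lemma29} by iterating it $n$ times, choosing $n$ large enough that $n\cdot m/2 > M$, and then shrinking the strip $\R\times[0,\delta]$ so that the first $n$ iterates of any point in it remain in the strip $\R\times[0,\delta_0]$ where Lemma \ref{lm: lemma29} applies (here $\delta_0$ denotes the $\delta$ provided by that lemma).

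First, I would fix an integer $n\geq 1$ with $nm/2 > M$, for instance $n=\lceil 2M/m\rceil+1$. Next, I would produce the required $\delta>0$. Since $\widehat f$ commutes with the integer translations $(x,y)\mapsto(x+1,y)$, the map $(x,y)\mapsto p_2(\widehat f^{k}(x,y))$ is $\Z$-periodic in $x$ for each $k$, where $p_2$ denotes the projection onto the second coordinate. Because $f$ preserves the boundary component $\T{1}\times\{0\}$, we have $p_2(\widehat f^{k}(x,0))=0$ for every $x\in\R$ and every $k\geq 0$. The $\Z$-periodicity lets us pass to the quotient, where the map $f^{k}$ is a continuous self-homeomorphism of the compact annulus $\Ac$ fixing $\T{1}\times\{0\}$ setwise, and hence for each $k\in\{0,1,\dots,n-1\}$ there exists $\delta_k>0$ such that
$$f^{k}\bigl(\T{1}\times[0,\delta_k]\bigr)\subset \T{1}\times[0,\delta_0).$$
Lifting to the universal cover and setting $\delta:=\min_{0\leq k\leq n-1}\delta_k$, every $\widehat z\in\R\times[0,\delta]$ satisfies $\widehat f^{k}(\widehat z)\in\R\times[0,\delta_0]$ for all $0\leq k\leq n-1$.

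Finally, I would apply Lemma \ref{lm: lemma29} to each iterate $\widehat f^{k}(\widehat z)$ for $k=0,\dots,n-1$, obtaining $p_1(\widehat f^{k+1}(\widehat z))-p_1(\widehat f^{k}(\widehat z))>m/2$, and telescope the resulting inequalities:
$$ p_1(\widehat f^{n}(\widehat z))-p_1(\widehat z)=\sum_{k=0}^{n-1}\bigl(p_1(\widehat f^{k+1}(\widehat z))-p_1(\widehat f^{k}(\widehat z))\bigr)>n\cdot\frac{m}{2}>M, $$
which is the desired conclusion.

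The argument is essentially routine; the only point that requires a moment of care is the uniform (in $x\in\R$) control needed in the construction of $\delta$, which is obtained either via the $\Z$-equivariance of $\widehat f$ together with compactness of $\Ac$, or equivalently by noting that $p_2\circ\widehat f^{k}$ is uniformly continuous on any strip $\R\times[0,1]$ by periodicity. Once that point is settled, the rest is just iterating Lemma \ref{lm: lemma29}.
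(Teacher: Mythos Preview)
Your proof is correct and is exactly the intended argument: the paper does not spell out a proof of this corollary (it merely writes ``We deduce the following result''), and the natural way to deduce it from Lemma~\ref{lm: lemma29} is precisely to choose $n$ with $nm/2>M$, shrink the strip so that the first $n-1$ iterates remain in the strip of Lemma~\ref{lm: lemma29}, and telescope.
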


\subsection{Dynamics of an oriented foliation in a neighborhood of an isolated singularity}

 In this subsection, we consider an oriented singular foliation $\mathcal{F}$ on an oriented surface $M$ which has an isolated singulary $z_0$. A \textit{sink} (resp. a \textit{source}) of $\mathcal{F}$ is an isolated singularity point $z_0$ of $\mathcal{F}$ such that there is a homeomorphism $h$ from a neighborhood $U$ of $z_0$ to the open unit disk $\D$ of $\R^2$ which sends $z_0$ to $0\in \D$ and sends the restricted foliation $\mathcal{F}\vert_{U\setminus\{z_0\}}$ to the radial foliation on $\D\setminus\{0\}$ with the leaves towards (resp. backwards) to $0$. We recall that for every $z\in \dom(\mathcal{F})$ we will write $\phi_z$ for the leaf of $\mathcal{F}$ that contains $z$, $\phi^+_z$ for the positive half-leaf and $\phi^-_z$ for the negative one.

We can define the $\alpha$-limit and $\omega$-limit sets of each leaf $\phi$ of $\mathcal{F}$ as follows:
$$ \alpha(\phi):= \bigcap_{z\in \phi} \overline{ \phi^-_z}, \quad \text{ and } \quad \omega(\phi):= \bigcap_{z\in \phi} \overline{ \phi^+_z}.$$

We will use the following result due to Le Roux that describes the dynamics of an oriented singular foliation $\mathcal{F}$ near an isolated singularity (see \cite{ler1}). For our purpose we state a simplified version of him result.

\begin{prop}[\cite{ler1}]\label{propofoliationnearsingularity}
  Let $\mathcal{F}$ be an oriented singular foliation on an oriented surface $M$. Let $z_0$ be an isolated singularity of $\mathcal{F}$. Then at least one of the following cases holds:
  \begin{itemize}
    \item[(1)] every neighborhood of $z_0$ contains a closed leaf of $\mathcal{F}$;
    \item[(2)]  there exist a leaf of $\mathcal{F}$ whose $\alpha$-limit set is reduced to $z_0$ and a leaf of $\mathcal{F}$ whose $\omega$-limit set is reduced to $z_0$; or
    \item[(3)]  $z_0$ is either a sink or a source of $\mathcal{F}$.
  \end{itemize}
\end{prop}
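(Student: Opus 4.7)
The plan is to work in a small disk neighborhood $U$ of the isolated singularity $z_0$, chosen so that $\overline{U}$ contains no other singularity of $\mathcal{F}$; then $\mathcal{F}$ restricts to a nonsingular oriented foliation of the punctured disk $U\setminus\{z_0\}$. The first dichotomy is on the presence of small closed leaves: if every neighborhood of $z_0$ contains a closed leaf of $\mathcal{F}$ then case (1) holds and there is nothing more to prove. Otherwise one shrinks to find an open disk $V$ with $z_0\in V\subset U$ such that no closed leaf of $\mathcal{F}$ is contained in $V\setminus\{z_0\}$, and the remaining task is to show that either (2) or (3) is verified.

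The next step is to analyze the limit behavior of leaves meeting $V\setminus\{z_0\}$. Fix a leaf $\phi$ intersecting $V$ at a point $z$: either the positive half-leaf $\phi^+_z$ eventually leaves $V$, or it remains trapped in $\overline{V}$. In the trapped case, a Poincar\'e--Bendixson type argument adapted to oriented foliations applies: using the orientation, one constructs short transverse arcs and studies successive returns of $\phi^+_z$ to them, and the absence of closed leaves in $V\setminus\{z_0\}$ together with the fact that $z_0$ is the only singularity in $V$ forces $\omega(\phi)=\{z_0\}$. The symmetric conclusion holds for $\phi^-_z$ and the $\alpha$-limit.

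This dichotomy then drives a Bendixson-type sector decomposition of $V\setminus\{z_0\}$. I would cut along distinguished half-leaves whose closure accumulates only at $z_0$, producing complementary open sectors of three standard types: \emph{elliptic} sectors, in which every leaf $\phi$ satisfies $\alpha(\phi)=\omega(\phi)=\{z_0\}$; \emph{parabolic} sectors, in which exactly one of $\alpha(\phi),\omega(\phi)$ equals $\{z_0\}$, uniformly across the sector; and \emph{hyperbolic} sectors, in which both endpoints of every leaf leave $V$. An elliptic sector alone yields case (2); so does the presence of parabolic sectors of both orientations, or of any hyperbolic sector, since the two boundary separatrices of a hyperbolic sector provide one half-leaf with $\omega$-limit $\{z_0\}$ and one with $\alpha$-limit $\{z_0\}$. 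The only remaining configuration is that all sectors are parabolic and oriented in the same direction; in that case, a local conjugation to the radial model combined with the absence of small closed leaves forces $z_0$ to be a sink or a source, which is case (3).

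The main obstacle is the rigorous construction of the sector decomposition. One must guarantee that the distinguished half-leaves used as cuts exist, that they are sufficiently finite or at least well-controlled to yield a bona fide decomposition, and that within each resulting sector the behavior of every leaf is uniform. This is delicate because, a priori, half-leaves with $\omega$-limit $\{z_0\}$ could accumulate in intricate ways, and one needs orientability of $\mathcal{F}$ together with a prime-end-style analysis of $V\setminus\{z_0\}$ to rule out pathological accumulations. Le Roux in \cite{ler1} handles this by a careful study of the set of singular half-leaves accumulating at $z_0$ and a trapping-region argument that reduces matters to a finite combinatorial problem, from which the three alternatives of the proposition emerge.
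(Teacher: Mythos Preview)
The paper does not contain a proof of this proposition: it is stated with attribution to Le Roux \cite{ler1} and used as a black box, so there is no argument in the paper to compare your proposal against. Your sketch --- ruling out small closed leaves, then applying a Poincar\'e--Bendixson type analysis and a sector decomposition near the isolated singularity --- is indeed the standard route to results of this kind and is consistent with the approach in \cite{ler1}, but since the paper itself offers no proof, the appropriate response here is simply to cite the result rather than to reprove it.
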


We will use also the following result due to Le Calvez. He proved that, in this case, $\mathcal{F}$ is also \textit{locally transverse} to $I$ at $z_0$ (see \cite{lec3}), that is, for every neighborhood $V_{z_0}$ of $z_0$ there exists a neighborhood $W_{z_0}$ of $z_0$ contained in $V_{z_0}$ such that for every $z\in W_{z_0}$, $z\neq z_0$, its transverse trajectory $I_{\mathcal{F}}^1(z)$ is contained in $V_{z_0}\setminus\{z_0\}$.

 Let $z_0$ be a point in an oriented surface $M$ and let $f$ be a homeomorphism of $M$ which fixes $x_0$. An \textit{local identity isotopy} of $f$ is a path that joins the identity to $f$ in the space of homeomorphisms of $M$ fixing $z_0$, furnished with the $C^0$-topology.

\begin{prop}[\cite{lec3}]\label{propolocallytransverse}
  Let $I$ be a local identity isotopy of a homeomorphism of an oriented surface $M$. Suppose that $\mathcal{F}$ is an oriented singular on $M$ which is transverse to $I$. If $M\setminus\{z_0\}$ is not a topological sphere, then $\mathcal{F}$ is also \textit{locally transverse} to $I$ at $z_0$.
\end{prop}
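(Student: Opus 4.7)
The plan is to combine the local classification of $\mathcal{F}$ near $z_0$ given by Proposition \ref{propofoliationnearsingularity} with a lifting argument to the universal cover of $\dom(\mathcal{F})$. Fix a disk neighborhood $V_{z_0}$ of $z_0$; I want to produce a smaller neighborhood $W_{z_0}\subset V_{z_0}$ such that, for each $z\in W_{z_0}\setminus\{z_0\}$, the class of the transverse trajectory $I^1_{\mathcal{F}}(z)$ admits a representative contained in $V_{z_0}\setminus\{z_0\}$.

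First, using continuity of the local identity isotopy $I=(f_t)_{t\in[0,1]}$ at the fixed point $z_0$, I would shrink $W_{z_0}$ so that $\bigcup_{t\in[0,1]}f_t(W_{z_0})\subset V_{z_0}$; in particular the ordinary trajectory $I(z)$ stays in $V_{z_0}$ for every $z\in W_{z_0}$. The difficulty is that $I(z)$ and $I^1_{\mathcal{F}}(z)$ need only be homotopic rel endpoints in $\dom(\mathcal{F})$, so a priori the transverse trajectory might spiral arbitrarily many times around $z_0$ before reaching $f(z)$. This is where the topological hypothesis on $M$ enters: the loop around $z_0$ in a small punctured neighborhood represents a nontrivial class in $\pi_1(M\setminus\{z_0\})$, hence in $\pi_1(\dom(\mathcal{F}))$. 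Consequently, in the universal cover $\widetilde{\dom}(\mathcal{F})$, each connected component of the preimage of $V_{z_0}\setminus\{z_0\}$ is simply connected, and the winding of a path around $z_0$ is faithfully encoded by its lift.

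Next I would run a case analysis according to Proposition \ref{propofoliationnearsingularity}. In Case (1), fix a closed leaf $\ell\subset V_{z_0}$ bounding a small disk $D_\ell\ni z_0$, and shrink $W_{z_0}$ so that $W_{z_0}\cup f(W_{z_0})\subset D_\ell$. Any lift $\widetilde\ell$ is a Brouwer line for $\widetilde f$, and a positively transverse path can cross a Brouwer line only in the prescribed direction; this traps $I^1_{\mathcal{F}}(z)$ inside $D_\ell\subset V_{z_0}$. In Case (2), leaves with $\alpha$- or $\omega$-limit set reduced to $\{z_0\}$ play an analogous role: their lifts form a family of Brouwer lines accumulating on the preimage of $z_0$ and confine the transverse trajectory. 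In Case (3), $z_0$ is a sink or source and $\mathcal{F}$ is locally conjugate to the radial foliation on a disk; passing to the infinite cyclic cover of $V_{z_0}\setminus\{z_0\}$, which embeds into $\widetilde{\dom}(\mathcal{F})$ by the previous paragraph, the lifted foliation has leaves running between the two ends, and positive transversality forces the ``winding coordinate'' to be monotone along $I^1_{\mathcal{F}}(z)$. Since the lift of the small path $I(z)$ has bounded winding, the homotopic transverse trajectory has bounded winding and therefore remains in a uniformly small neighborhood of $z_0$.

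The main obstacle will be Case (3): ruling out that the transverse trajectory spirals around $z_0$ arbitrarily many times before closing up on $f(z)$. Excluding this is precisely where the hypothesis that $M$ is not a sphere is indispensable, since otherwise $\pi_1(M\setminus\{z_0\})$ would not detect spiraling at all. Cases (1) and (2) are more elementary once one observes that the distinguished leaves supplied by the classification lift to Brouwer lines that positively transverse paths can cross only in one direction.
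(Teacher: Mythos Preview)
The paper does not prove this proposition: it is stated as a result of Le Calvez \cite{lec3} and cited without proof, so there is no argument in the present paper to compare your proposal against.

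That said, your outline is broadly in the spirit of how such results are proved, but it is more of a heuristic than a proof. A few concrete issues: in Case (1) a closed leaf bounding a disk $D_\ell$ that contains both $z$ and $f(z)$ does confine the transverse path, but you have not explained why such $\ell$ can be chosen inside an arbitrarily prescribed $V_{z_0}$ uniformly for all $z$ near $z_0$. In Case (2) you assert that leaves with $\alpha$- or $\omega$-limit $\{z_0\}$ ``confine'' the transverse trajectory, but a single such leaf does not separate a punctured neighborhood of $z_0$, and you would need to combine several of them (or use the structure of petals/hyperbolic sectors more carefully) to build actual barriers. In Case (3) your monotonicity argument for the ``winding coordinate'' is the right intuition, but monotonicity alone does not bound the winding; what you need is that the lift of $I(z)$ (which is homotopic rel endpoints to the lift of $I^1_{\mathcal{F}}(z)$) already has uniformly small winding, and this requires the uniform continuity of the isotopy at $z_0$ together with the fact that the loop around $z_0$ is nontrivial in $\pi_1(\dom(\mathcal{F}))$. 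You state this last point but do not verify it: $z_0$ is a singularity of $\mathcal{F}$, so it lies in $\sing(\mathcal{F})$, and the loop around $z_0$ is indeed nontrivial in $\pi_1(\dom(\mathcal{F}))$ precisely because $\dom(\mathcal{F})\subset M\setminus\{z_0\}$ and the latter is not simply connected by hypothesis. If you want a complete argument you should consult \cite{lec3} directly.
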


\subsection{Periodic disks for area-preserving homeomorphisms of $\Ac$}

In this subsection, let $f:\Ac\to\Ac$ be an area-preserving homeomorphism, and let $\hat f$ be a lift of $f$ to the universal covering space.  A set $O\subset \Ac$ is a topological disk if it is homeomorphic to an open disk of $\R^2$. We need the following result which can be deduced from Theorem 4 of \cite{LecalvezKoroTal}.

\begin{lemma}
Let $g:\R^2\to\R^2$ be a homeomorphism and let $z\in \R^2$ be such that, for every neighborhood $U$ of $z$, there exists three disjoint and invariant topological disks $O_1, O_2, O_3$ such that $O_i\cap U\not=\emptyset,\,i\in\{1, 2, 3\}$. If each $O_i$ does not contain wandering points, then $g(z)=z$.
\end{lemma}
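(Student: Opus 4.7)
The plan is a proof by contradiction: assume $g(z)\neq z$ and derive a contradiction from the three disjoint invariant disks.

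By continuity I choose a neighborhood $U$ of $z$ so small that $U\cap g(U)=\emptyset$, and the hypothesis provides disjoint invariant topological disks $O_1,O_2,O_3$ each meeting $U$ and each with no wandering points. The point $z$ cannot lie in any $O_j$: otherwise $z$ would admit an open neighborhood contained in $O_j$, and this neighborhood would be disjoint from the other two $O_i$'s, contradicting their intersection with every neighborhood of $z$. Hence $z\in\partial O_i$ for each $i$. Since $g(O_i)=O_i$ implies $g(\partial O_i)=\partial O_i$, we also have $g(z)\in\bigcap_{i=1}^{3}\partial O_i$. Moreover, the invariance of $O_i$ combined with $O_i\cap U\neq\emptyset$ forces $O_i\cap g(U)\neq\emptyset$, so each $O_i$ is a connected, simply connected, $g$-invariant open set joining a neighborhood of $z$ to a disjoint neighborhood of $g(z)$. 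In addition, $g|_{O_i}$ is an orientation-preserving homeomorphism of $O_i\cong\R^2$ without wandering points, so Brouwer's plane translation theorem provides a fixed point of $g$ inside each $O_i$.

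At this point I would appeal to Theorem~4 of \cite{LecalvezKoroTal}, which is designed for precisely this configuration: three pairwise disjoint invariant topological disks with no wandering points accumulating at a common boundary point. Its conclusion forces $z$ to be fixed, contradicting the standing assumption $g(z)\neq z$. Matching the precise hypotheses of Theorem~4 to the setting constructed above is then a routine verification, given the boundary and recurrence properties just established.

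The principal obstacle in avoiding the cited theorem is that the topological configuration above is not excluded by planar topology alone: three disjoint simply-connected open sets can perfectly well join two disjoint open regions in $\R^2$ (e.g.\ three parallel strips). The contradiction must therefore come from the dynamical non-wandering condition inside each disk, and the prime-end and Brouwer--Le Calvez techniques developed in \cite{LecalvezKoroTal} are built precisely to encode that obstruction. This is why the most efficient route is to invoke Theorem~4 rather than reprove the obstruction from scratch.
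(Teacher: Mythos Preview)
Your overall approach matches the paper's: the paper gives no proof at all for this lemma, merely noting that it ``can be deduced from Theorem~4 of \cite{LecalvezKoroTal}''. So citing that result is exactly what is intended, and your final paragraph explaining why the obstruction is dynamical rather than purely topological is well put.

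However, your preliminary step showing $z\in\partial O_i$ has a genuine gap. You write that if $z\in O_j$, then a small neighborhood of $z$ lies in $O_j$ and is disjoint from the other two disks, ``contradicting their intersection with every neighborhood of $z$''. But the hypothesis does \emph{not} say that a fixed triple $O_1,O_2,O_3$ meets every neighborhood of $z$; it says that for each neighborhood $U$ there exists \emph{some} triple (possibly depending on $U$) meeting $U$. Once you have fixed a particular $U$ and obtained a particular triple, you only know that each $O_i$ meets that one $U$, not that $z\in\overline{O_i}$. So the conclusion $z\in\bigcap_i\partial O_i$ is not yet justified, and with it the subsequent claim $g(z)\in\bigcap_i\partial O_i$ also collapses. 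Depending on the exact hypotheses of Theorem~4 in \cite{LecalvezKoroTal} this may or may not matter, but as written you are asserting a configuration you have not established. (In the paper's actual application, Lemma~\ref{lm:unbounded_disk}, infinitely many fixed disks are available, and one can choose three for which $z$ genuinely lies in each closure; you may want to check whether the cited theorem already handles the more general accumulation hypothesis directly.)
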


As a consequence we obtain.

\begin{lemma}\label{lm:unbounded_disk}
Let $O\subset \Ac$ be a topological disk and $\widehat O$ a connected component of $\widehat{\pi}^{-1}(O)$. If $\widehat O$ is not bounded, and if  there exists integers $p, q$ with $q>0$ such that $\widehat f^{q}(\widehat O)=\widehat O +(p,0)$, then there exists $\widehat x$ such that $\hat f^{q}(\widehat x)=\widehat x+(p,0)$ and such that $\widehat{\pi}(\widehat x)\in \partial O$.
\end{lemma}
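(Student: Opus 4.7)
The plan is to reduce to the previous lemma by considering the area-preserving homeomorphism $\widehat g := T^{-p}\circ \widehat f^q$ of $\widehat{\Ac}$, where $T:(x,y)\mapsto (x+1,y)$ generates the deck group: a fixed point of $\widehat g$ whose $\widehat\pi$-image lies in $\partial O$ is exactly the point we need. Since $\widehat f$ commutes with $T$, so does $\widehat g$, and by hypothesis $\widehat g(\widehat O)=\widehat O$. Hence every translate $\widehat O+(k,0)$, $k\in\Z$, is $\widehat g$-invariant. Because $O$ is simply connected, $\widehat\pi|_{\widehat O}:\widehat O\to O$ is a homeomorphism, so the translates $\{\widehat O+(k,0)\}_{k\in\Z}$ are pairwise disjoint topological disks, each of the same finite area as $O$.

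First I would locate the candidate fixed point $\widehat x$ using the unboundedness of $\widehat O$. Choose $\widehat z_n\in\widehat O$ with, say, $p_1(\widehat z_n)\to +\infty$, set $k_n:=\lfloor p_1(\widehat z_n)\rfloor$, and pass to a subsequence so that $p_2(\widehat z_n)\to y^*\in[0,1]$ and $p_1(\widehat z_n)-k_n\to t\in[0,1]$. Then $\widehat z_n-(k_n,0)\to \widehat x:=(t,y^*)\in\widehat{\Ac}$, and each $\widehat z_n-(k_n,0)$ belongs to $\widehat O-(k_n,0)$, so $\widehat x$ is the limit of points taken from infinitely many distinct translates of $\widehat O$. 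A short disjointness argument shows that $\widehat x$ cannot belong to any $\widehat O+(k,0)$: otherwise a neighborhood of $\widehat x$ inside that translate would meet the disjoint translate $\widehat O-(k_n,0)$ for large $n$, a contradiction. Therefore $\widehat\pi(\widehat x)\in\overline{O}\setminus O=\partial O$.

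To conclude $\widehat g(\widehat x)=\widehat x$, I would extend $\widehat g$ to an area-preserving homeomorphism $\widehat g'$ of $\R^2$ by reflection across the boundary lines $\R\times\{0\}$ and $\R\times\{1\}$; this is well defined because $\widehat g$ preserves each boundary component. Each translate $\widehat O+(k,0)$ is then a $\widehat g'$-invariant topological disk of finite area, so by the standard invariant-area argument (iterates of a wandering open set would produce infinitely many pairwise disjoint subsets of equal positive area inside a finite-area invariant region) it contains no wandering points. For any neighborhood $U$ of $\widehat x$ in $\R^2$, the points $\widehat z_n-(k_n,0)$ eventually lie in $U$, so picking three sufficiently large distinct indices $n_1,n_2,n_3$ produces three pairwise disjoint invariant topological disks $\widehat O-(k_{n_i},0)$, each meeting $U$ and containing no wandering points. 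The previous lemma applied to $\widehat g'$ at $\widehat x$ gives $\widehat g'(\widehat x)=\widehat x$, and since $\widehat x\in\widehat{\Ac}$ this yields $\widehat f^q(\widehat x)=\widehat x+(p,0)$, as required.

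The main obstacles are the reflection extension needed to invoke the planar lemma and the verification of the no-wandering-point hypothesis; the rest is a direct unwinding of definitions using that $\widehat\pi|_{\widehat O}$ is a homeomorphism and that the translates of $\widehat O$ are disjoint and of finite area.
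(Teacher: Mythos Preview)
Your proof is correct and follows essentially the same route as the paper's: both define $\widehat g=\widehat f^{q}-(p,0)$, use that the integer translates of $\widehat O$ are pairwise disjoint $\widehat g$-invariant disks without wandering points, locate an accumulation point $\widehat x$ of points in distinct translates, and invoke the triple-boundary lemma. The differences are cosmetic: the paper obtains non-wandering by noting that $\widehat g|_{\widehat O_i}$ is conjugate to $f^{q}|_{O}$ (non-wandering because $f$ is area-preserving on the compact $\Ac$), whereas you argue directly from finite area; and you spell out both the reflection extension to $\R^2$ needed to apply the planar lemma and the verification that $\widehat\pi(\widehat x)\in\partial O$, steps the paper leaves implicit.
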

\begin{proof}
Note that, since $O$ is a topological disk, the sets $\widehat O_i= \widehat O+(i,0)$, with $i\in\Z$ are all disjoint, and since $\widehat O$ is unbounded one may find a sequence of points $(\widehat{z}_i)_{i\in\Z}$ of points in $\widehat O_i$ that accumulates on a point $\widehat{x}$. Note that each $\widehat O_i$ is invariant by $\widehat g= \widehat f^q-(p,0)$. Furthermore, since each $\widehat O_i$ projects to a topological disk and is invariant by $\widehat g$, the dynamics of $\widehat g$ restricted to each $\widehat O_i$ is conjugated to the dynamics of $f^q$ restricted to $O$. Since $f^q$ is area-preserving, it has no wandering points and therefore $\widehat g$ has no wandering points in each $\widehat O_i$. The result follows from the previous lemma.
\end{proof}

\subsection{Essential sets on the annulus, prime ends and realization of rotation vectors in continua}

We say that an open subset $O$ of $\A=\T1\times(0,1)$ is {\it essential} if it contains a simple closed curve which is not homotopic to a point. If $O$ is open, connected and essential, then the {\it filling of $O$}, that is the union of $O$ and all the compact connected components of its complement, is a topological open annulus homeomorphic to $\A$. We say that $K\subset \A$  is an \textit{essential continuum} if it is a continuum (i.e. connected and compact) which separates the two ends of $\A$. Likewise, if $K\subset \Ac=\T1\times[0,1]$, then we say that $K$ is an essential continuum if it is contained in $\A$ and is an {\it essential continuum} for $\A$. If $K\subset\Ac$, we denote by $U_+=U_+(K)$ and $U_-=U_-(K)$ be the components of $\Ac\setminus K$ containing $\T{1}\times \{1\}$ and $\T{1}\times \{0\}$ respectively.\\

\subsubsection{Prime ends rotation numbers} We start recalling a very brief description of prime ends rotation numbers (for a more complete description see \cite{KLN}). Let $f$ be a homeomorphism of $\Ac$, and let $K$ be an essential $f$-invariant continuum. Collapsing the lower and upper boundary components of $\Ac$ to points $S$ and $N$, respectively, we obtains a topological $2$-dimensional sphere and the dynamics induced by $f$ fixes these two points. We consider $U_+$ and $U_-$ as defined above. The sets $U_+^*=U_+ \cup \{N\}$ and $U_-^*=U_- \cup \{S\}$ are invariant open topological disks. It is known that one may define a prime end compactification $\widetilde{U}_+^*$  (respectively $\widetilde{U}_-^*$) of $U_+^*$ (resp. $U_-^*$) which, as a set, is the disjoint union of $U_+^*$ (resp. $U_-^*$ ) with a topological circle, called the circle of prime ends. This compactification can be endowed with a suitable topology making it homeomorphic to the closed unit disk $\overline{\D}$ of the plane. Furthermore, and more relevantly, this compactification is such that the restriction of $f$ to $U_{+}$ (resp $U_-$) extends in a unique way to a homeomorphism of $\widetilde{U}_+^*$ (resp. $\widetilde{U}_-^*$).

Lifting the inclusion $U_+ \to  U_+^*\setminus \{N\} $ to the universal covering, we obtain a map $\pi_+: \widehat{\pi}^{-1}(U_+) \to H_+:=\{(x,y)\in\R^2: y>0\}$ and a homeomorphism $\widehat{f}_+: \overline{H}_+ \to \overline{H}_+$ such that $ \pi_+ \widehat{f}_+ \vert_{\widehat{\pi}^{-1}(U_+)}= \widehat{f}_+ \pi_+$ and $\widehat{f}_+(x+1,y)= \widehat{f}_+(x,y)+(1,0)$. Similarly, we obtain map $\pi_-: \widehat{\pi}^{-1}(U_-) \to H_-:=\{(x,y)\in\R^2: y<0\}$ and a homeomorphism $\widehat{f}_-: \overline{H}_- \to \overline{H}_-$ such that $ \pi_- \widehat{f}_- \vert_{\widehat{\pi}^{-1}(U_-)}= \widehat{f}_- \pi_-$ and $\widehat{f}_-(x+1,y)= \widehat{f}_-(x,y)+(1,0)$. The \textit{upper} (respectively \textit{lower}) \textit{prime end rotation number} of $K$ associated to $\widehat{f}$ is defined as
$$  \rho^{\pm}(\widehat{f},K):= \lim_{n\to +\infty} \frac{p_1(\widehat{f}_\pm^n(x,0))-x}{n},$$
which is independent of $x$. If $ \rho^{+}(\widehat{f},K)= \rho^{-}(\widehat{f},K)$ we call this number \textit{the prime end rotation number of $K$}. We note that for every integers $p,q$, the map $\widehat{f}^q+(p,0)$ is a lift of $f^q$ and we have $$\rho^{\pm}(\widehat{f}^q+(p,0),K)= q \rho^{\pm}(\widehat{f},K)+p.$$

If $f$ is a homeomorphism of $\A$ and if $O\subset \A$ is a pre-compact essential open annulus which is $f$-invariant, one can likewise define the prime ends compactification of $O$ in the following way. Let $S, N$ be the two ends of $\A$. Since $O$ is an essential open annulus, its complement has exactly two connected components. Let $K_{N}$ be the subset of the boundary of $O$ that is contained in the connected component that is a neighborhood of $N$, and let $K_S=\partial O\setminus K_N$. One notes that both $K_N$ and $K_S$ are $f$-invariant essential continua and that $O\subset U_{-}(K_N)\cap U_{+}(K_S)$. The prime ends compactifictaion $O^*$ of $O$ is the disjoint union of $O$ with two topological circles $C_{S}$ and $C_{N}$ with an appropiate topology such that there exists neighborhoods $V_S, V_N $ of $C_S$ and $C_N$ respectively in $O^*$, a neighborhood $W_S$ of the prime ends circle in $\widetilde{U}_+^*(K_S)$  and $W_N$ neighborhood of the prime ends circle in $\widetilde{U}_-^*(K_N)$, such that $V_S$ is homeomorphic to $W_S$ and such that $V_N$ is homeomorphic to $W_N$. Done in this way, $O^*$ is homeomorphic to $\Ac$, and $f$ has a unique continuous extension $f^*$ that is a homeomorphism of $O^*$. One can then verify that, if $\widehat{f^*}$ is a lift of $f^{*}$ to the universal covering, then the rotation number of the restriction of $f^{*}$ to $C_N$ is the same as $\rho^{-}(\widehat{f}, K_N)$ and that the rotation number of the restriction of $f^{*}$ to $C_S$ is the same as $\rho^{+}(\widehat{f}, K_S)$.

\subsubsection{Realization of rotation vectors in continua}

We will need the following result, which can be derived from \cite{Koropecki} and \cite{KLN}.

\begin{prop}\label{pr:boundaryannulus}
Let $V\subset\A$ be an essential open annulus, $K'$ be a connected component of $\partial V$, and $K$ be the union of $K'$ and all the pre-compact connected components of its complement. Let $f:\A\to\A$ be an area-preserving homeomorphism such that $f(V)=V$ and $\widehat{f}$ a lift of $f$ to its universal covering. Then there exists $\rho$ such that every point in $K$ has the same rotation number $\rho$. Furthermore, $\rho$ is the prime ends rotation number of $K$.
\end{prop}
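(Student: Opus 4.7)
The plan is to reduce the statement to an application of the rotation theory for invariant essential annular continua under area-preserving homeomorphisms developed in \cite{Koropecki} and \cite{KLN}. The first step is to verify that $K$ is an $f$-invariant essential continuum in $\A$ with no pre-compact complementary components. Since $V$ is an essential open annulus in $\A$ and $f$ preserves each end of $\A$, the essential components of $\partial V$ are $f$-invariant (they correspond to the two ends of $V$, which inherit their orientation from those of $\A$); in particular $K'$ is $f$-invariant. Because the operation of filling pre-compact complementary components commutes with $f$, the set $K$ is also $f$-invariant, essential (as $K\supset K'$), and fully filled by construction, so $\Ac\setminus K = U_+(K)\cup U_-(K)$.

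Next, since $V$ is connected and disjoint from $K$, it lies in one of $U_\pm(K)$; without loss of generality $V\subset U_-(K)$. Set $\rho := \rho^-(\widehat f, K)$. Because $V$ is an invariant essential open annulus with $K'$ as its upper (relative to $V$) boundary component, the construction of the prime ends compactification $V^*$ reviewed in the preceding subsection yields an upper prime ends circle $C_N$ such that the extension $f^*$ restricted to $C_N$ has rotation number $\rho^-(\widehat f, K')$. Since every pre-compact component of $\A\setminus K'$ is disjoint from $U_-(K')$, we have $U_-(K')=U_-(K)$, and hence the prime ends compactification from the $U_-$ side is the same for $K'$ and for $K$. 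This yields $\rho^-(\widehat f, K') = \rho$, so $\rho$ is canonically the rotation number extracted from the $V$ side of $K$.

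The final and crucial step is to invoke the main realization result in \cite{KLN} (which rests on the rotation theory of invariant annular continua from \cite{Koropecki}), to the effect that, for an area-preserving homeomorphism of $\A$ and an $f$-invariant essential continuum $K$ bounded on one side by an $f$-invariant essential open annulus, every point of $K$ has a well-defined rotation number equal to the prime ends rotation number of $K$ taken from the annulus side. Applying this to our $K$ and $V$ yields the conclusion. The main obstacle in the argument lies precisely in this last step: guaranteeing that \emph{every single point} of $K$ has rotation number $\rho$. In the general non-area-preserving situation an invariant essential continuum can carry a whole interval of rotation numbers; the rigidity here comes from area preservation, which excludes wandering dynamics near $K$, together with the Birkhoff-type recurrence provided by the invariant annular complement $V$, forcing pointwise rotation numbers in $K$ to agree with the prime ends value on the $V$-side.
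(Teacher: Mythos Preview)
Your reduction to known results is on the right track, but the final step invokes a theorem that does not exist in the form you claim. Neither \cite{KLN} nor \cite{Koropecki} contains a statement asserting that \emph{every} point of an invariant essential annular continuum has a well-defined rotation number equal to the prime ends rotation number. What \cite{Koropecki} provides (via the argument of its Theorem~2.8) is the corresponding statement for the boundary continuum $K'$ itself: every point of $K'$ has rotation number equal to the prime ends rotation number $\rho$. The passage from $K'$ to $K$---that is, controlling rotation numbers inside the pre-compact complementary disks one has filled in---is precisely the content that requires proof, and you have black-boxed it.

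The paper closes this gap by a contradiction argument. Assuming two points of $K$ have distinct rotation numbers $\rho_-<\rho_+$, one first uses Proposition~5.4 of \cite{franks/lecalvez:2003} to realize every rational in $(\rho_-,\rho_+)$ by a periodic orbit in $K$, and then Theorem~A of \cite{Koropecki} to produce two ergodic measures supported in $K$ with distinct \emph{irrational} rotation numbers, hence two recurrent points $z_1,z_2\in K$ with distinct irrational rotation numbers. Now comes the key observation you are missing: each pre-compact component $O$ of $\A\setminus K'$ is an open topological disk, and since $f$ is area-preserving it must be periodic, say $f^{q_0}(O)=O$. Any recurrent point in such a disk therefore has a \emph{rational} rotation number. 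Consequently $z_1$ and $z_2$ cannot lie in any filled-in disk and must already belong to $K'$; but all points of $K'$ share the single rotation number $\rho$, a contradiction. This argument is short but genuinely uses area preservation through the periodicity of the complementary disks, and it is not subsumed by any single citation.
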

\begin{proof}
The same proof as Theorem 2.8 of \cite{Koropecki} shows that the rotation number of any point in $K'$ is the same, and that it is precisely the prime end rotation number of $K'$. It remains to show the same is true for any point in $K$. Suppose, for a contradiction, that there are points with two different rotation numbers, $\rho_{-}$ and $\rho_{+}$ in $K$. By Proposition 5.4 of \cite{franks/lecalvez:2003} one has that for every rational $p/q$ in $(\rho_{-},\rho_{+})$ there exists a point $z_{p/q}$ in $K$ such that, if $\widehat{z_{p/q}}\in \widehat{\pi}^{-1}(z_{p/q})$, then $\widehat{f}^q(\widehat{z_{p/q}})=\widehat{z_{p/q}}+(p,0)$. By Theorem $A$ of \cite{Koropecki}, since the rotation interval of the restriction of $f$ to $K$ is a non-degenerate closed interval, one deduces that there are two ergodic measures, $\mu_1$ and $\mu_2$, supported in $K$, such that both $\rot(\widehat f,\mu_1)$ and $\rot(\widehat f,\mu_2)$ are irrational numbers and such that $\rot(\widehat f,\mu_1)\not= \rot(\widehat f,\mu_2)$. This implies that there exists recurrent points $z_1$ and $z_2$ in $K$ such that the rotation number of $z_1$ is $\rot(\widehat f,\mu_1)$ and the rotation number of $z_2$ is $\rot(\widehat f,\mu_2)$. But if $O$ is a pre-compact connected component of the complement of $K'$, then $O$ is an open topological disk in $\A$, and since $f$ preserves area there exists some integer $q_0$ such that $f^{q_0}(O)=O$. This implies that every recurrent point of $O$ that has a rotation number must have a rational rotation number. Therefore neither $z_1$ nor $z_2$ can lie in pre-compact connected components of the complement of $K'$, and so both points belong to $K'$. But this is a contradiction, since every point in $K'$ has the same rotation number.
\end{proof}

\begin{lemma}\label{lm:rotationnumber of the limit}
Let $f:\A\to\A$ be a homeomorphism, $\widehat f$ be a lift of $f$ and assume that there exists $\rho$ a real number and $K$ an $f$-invariant compact set such that, for every $f$-invariant ergodic measure $\nu$ supported on $K$, the rotation number of $\nu$ is $\rho$. Then for every $\varepsilon>0$ there exists $N_0=N_0(\varepsilon)$ such that for all $\widehat z\in \widehat{\pi}^{-1}(K)$ and all $n\ge N_0$, $\vert p_1(\widehat f^{n}(\widehat z))-p_1(\widehat z)-n\rho\vert <n\varepsilon/2$. Furthermore, for every $\varepsilon>0$ there exists $\delta>0$ such that if $y$ is a point whose whole orbit lies in the $\delta$-neighborhood of $K$ and has rotation number, then $\vert \rot(\widehat f, y)-\rho\vert\le\varepsilon.$
\end{lemma}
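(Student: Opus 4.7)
The plan is to prove both assertions by contradiction via empirical measures, using the compactness of the space of $f$-invariant Borel probability measures on (a neighborhood of) $K$ together with the ergodic decomposition.

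For the first assertion, writing $z=\widehat\pi(\widehat z)\in K$, the telescoping identity $p_1(\widehat f^n(\widehat z))-p_1(\widehat z)=\sum_{j=0}^{n-1}\rho_1(f^j(z))$ reduces the claim to the uniform convergence of the Birkhoff averages of $\rho_1$ to $\rho$ on $K$. If this failed, one would find $\varepsilon>0$ together with $z_k\in K$ and $n_k\to\infty$ such that $\bigl|\tfrac{1}{n_k}\sum_{j=0}^{n_k-1}\rho_1(f^j(z_k))-\rho\bigr|\ge\varepsilon/2$. I would form the empirical measures $\mu_k:=\tfrac{1}{n_k}\sum_{j=0}^{n_k-1}\delta_{f^j(z_k)}$, which are probability measures on the compact set $K$. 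After extracting a weak-$\ast$ convergent subsequence the limit $\mu$ is $f$-invariant (since $\|\mu_k-f_{\ast}\mu_k\|_{TV}\le 2/n_k\to 0$) and supported on $K$. Continuity of $\rho_1$ on the compact set $K$ gives $\int\rho_1\,d\mu_k\to\int\rho_1\,d\mu$, so $|\int\rho_1\,d\mu-\rho|\ge\varepsilon/2$. But the ergodic decomposition of $\mu$, combined with the hypothesis that every $f$-invariant ergodic measure supported on $K$ has rotation number $\rho$, forces $\int\rho_1\,d\mu=\int \rot(\widehat f,\nu)\,d\tau(\nu)=\rho$, a contradiction.

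For the second assertion I would run a parallel argument, with the twist that the orbits need not lie on $K$. Supposing the statement fails, pick $\varepsilon>0$, $\delta_k\to 0^+$, and $y_k$ whose full orbit lies in the closed $\delta_k$-neighborhood $\overline{K_{\delta_k}}$ of $K$, with $|\rot(\widehat f,y_k)-\rho|>\varepsilon$. Since $K$ is a compact subset of the open annulus $\A$, for $k$ large $\overline{K_{\delta_k}}$ is contained in a fixed compact set $K'\subset\A$. Using that $\rot(\widehat f,y_k)$ exists, choose $N_k\to\infty$ with $\bigl|\tfrac{1}{N_k}\sum_{j=0}^{N_k-1}\rho_1(f^j(y_k))-\rot(\widehat f,y_k)\bigr|<\varepsilon/2$, and form the empirical measure $\mu_k:=\tfrac{1}{N_k}\sum_{j=0}^{N_k-1}\delta_{f^j(y_k)}$, supported on $K'$. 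After passing to a weak-$\ast$ limit $\mu$ the $f$-invariance holds as before. The key remaining point is that $\mu$ is supported on $K$: for every $\eta>0$, once $\delta_k<\eta$ we have $\mu_k(\overline{K_\eta})=1$, so Portmanteau's theorem applied to the closed set $\overline{K_\eta}$ yields $\mu(\overline{K_\eta})\ge\limsup_k \mu_k(\overline{K_\eta})=1$; letting $\eta\to 0^+$ gives $\mu(K)=1$. The proof then concludes as in the first part: the ergodic decomposition forces $\int\rho_1\,d\mu=\rho$, contradicting $|\int\rho_1\,d\mu_k-\rho|>\varepsilon/2$.

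The only genuinely technical point is this last support verification — showing that weak limits of empirical measures living in shrinking neighborhoods of $K$ actually concentrate on $K$ — but it is a direct consequence of Portmanteau once the compact set $K'$ containing all the $\overline{K_{\delta_k}}$ has been fixed, so no real obstacle arises; the remainder is a standard combination of compactness of $\mathcal{M}_f(\A,K')$, continuity of $\rho_1$, and the ergodic decomposition.
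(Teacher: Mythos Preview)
Your argument for the first assertion is essentially identical to the paper's: a contradiction via empirical measures, weak-$\ast$ compactness, and the ergodic decomposition.

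For the second assertion your approach is correct but genuinely different from the paper's. You run the empirical-measure machinery a second time, the new ingredient being the Portmanteau verification that the limit measure is actually supported on $K$. The paper instead argues directly and constructively from the first assertion: having fixed $N_0=N_0(\varepsilon)$, it observes that the continuous function $g(z)=\tfrac{1}{N_0}\bigl(p_1(\widehat f^{N_0}(\widehat z))-p_1(\widehat z)\bigr)-\rho$ takes values in $(-\varepsilon/2,\varepsilon/2)$ on $K$, hence in $[-\varepsilon,\varepsilon]$ on a $\delta$-neighborhood of $K$ by uniform continuity; then for any $y$ whose orbit stays in that neighborhood, averaging $g$ along the $N_0$-time subsequence of the orbit bounds $|\rot(\widehat f,y)-\rho|$ by $\varepsilon$. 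The paper's route is shorter and more explicit (the $\delta$ is just a modulus of continuity for $g$), and it exploits the first part directly rather than repeating its proof. Your route is more uniform in spirit and would generalize more readily to situations where the displacement is merely integrable rather than continuous, at the cost of being non-constructive and requiring the support argument.
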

\begin{proof}
Suppose, by contradiction, there exists a sequence of points $\widehat{z_k}\in \widehat{\pi}^{-1}(K)$ and an increasing sequence of integers $n_k$ such that $\vert p_1(\widehat f^{n_k}(\widehat z_k))-p_1(\widehat z_k)-n_k\rho\vert \ge n_k\varepsilon/2$. Let $\nu_k$ be the measure
$\frac{1}{n_k}\sum_{i=0}^{n_k-1}\delta_{f^{i}(\widehat{\pi}(\widehat{z}_k))}$, and we assume that $\nu_k$ converges in the weak-$*$ topology to a measure $\overline{\nu}$, otherwise we take a subsequence. One verifies that $\overline{\nu}$ is an $f$-invariant measure supported on $K$ and that $\vert\rot(\widehat f, \overline{\nu})-\rho\vert\ge \varepsilon/2$. By the Ergodic Decomposition Theorem, we obtain that there exists an $f$-invariant ergodic measure $\nu$ supported on $K$ whose rotation number is not $\rho$. This is a contradiction. For the second assertion, given $\varepsilon>0$, if $N_0=N_0(\varepsilon)$, note that the continuous function $g:\A\to\R, \, g(z)=\frac{1}{N_0}\left[ p_1(\widehat f^{N_0}(\widehat z))-p_1(\widehat z)\right]-\rho$, where $\widehat z$ is any point in $\widehat{\pi}^{-1}(z)$, takes values in $(-\varepsilon/2, \varepsilon/2)$ for $z\in K$. Therefore, there exists some $\delta>0$ such that, for every $y$ in a $\delta$-neighborhood of $K$, the function $g$ takes values in $[-\varepsilon,\varepsilon]$. This implies that, if the whole orbit of $y$ lies in the $\delta$-neighborhood of $K$, then
$$\left\vert\frac{1}{iN_0}\left[ p_1(\widehat f^{iN_0}(\widehat y))-p_1(\widehat y)\right]-\rho\right\vert=\left\vert\frac{1}{i}\sum_{j=0}^{i-1}g(f^{jN_0}(y))\right\vert\le \varepsilon$$
where $\widehat{y}\in\pi^{-1}(y)$. Therefore, if $y$ has a rotation number well-defined, then $\vert \rot(\widehat f, y)-\rho\vert\le\varepsilon$.
\end{proof}

\begin{prop}\label{pr:continuumwithinessentialinterior}
Let $f:\A\to\A$ be an area-preserving homeomorphism, let $K'$ be an $f$-invariant essential continuum and let $K$ be the union of $K'$ and all the pre-compact connected components of its complement. If the interior of $K$ is inessential, then every point in $K$ has the same rotation number.
\end{prop}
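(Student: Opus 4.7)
The plan is to reduce to Proposition \ref{pr:boundaryannulus} by examining the two $f$-invariant essential open annuli $V_-$ and $V_+$ in $\A \setminus K$ that contain neighborhoods of the lower and upper ends of $\A$, together with essential connected components $K_\pm$ of $\partial V_\pm$. By Proposition \ref{pr:boundaryannulus} applied to $(V_\pm, K_\pm)$ and, more generally, to every connected component of $\partial V_\pm$, every point of $\partial V_\pm$ has a common rotation number $\rho_\pm$, equal to the (upper, resp. lower) prime end rotation number of $V_\pm$.

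The first main step uses the inessential interior hypothesis to show $\rho_- = \rho_+$. Suppose toward contradiction that $K_- \cap K_+ = \emptyset$: then $K_-$ and $K_+$ are disjoint essential continua in $\A$ and cobound a nonempty open essential annular region $R \subset \A$. Since $V_-$ lies in the lower-end component of $\A \setminus K_-$ and $V_+$ lies in the upper-end component of $\A \setminus K_+$, we have $R \cap (V_- \cup V_+) = \emptyset$, so $R \subset K$; being open and essential, $R$ is an essential subset of $\inte(K)$, contradicting the hypothesis. Thus $K_- \cap K_+ \neq \emptyset$, and any common point forces $\rho_- = \rho_+ =: \rho$, so every point of $\partial K = \partial V_- \cup \partial V_+$ has rotation number $\rho$.

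For the points of $\inte(K)$, let $O$ be a connected component of $\inte(K)$. By hypothesis $O$ is inessential, so any lift $\widehat O$ of $O$ to the universal cover of $\A$ is bounded; combined with area-preservation and pre-compactness of $O$, the set $O$ is periodic of some period $q \geq 1$ with $\widehat f^q(\widehat O) = \widehat O + (p,0)$ for some $p \in \Z$, and a direct estimate using boundedness of $\widehat O$ shows that every $z \in O$ has a well-defined rotation number equal to $p/q$. To show $p/q = \rho$, pick any $f^q$-invariant Borel probability measure $\nu$ on the compact $f^q$-invariant set $\partial O \subset \partial K$ and form $\mu_f := \frac{1}{q}\sum_{i=0}^{q-1} f^i_* \nu$, an $f$-invariant measure supported in $\partial K$. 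Setting $\widehat g := \widehat f^q - (p,0)$, which preserves $\widehat O$, a direct computation (in which the $\widehat g$-contribution integrates to zero by $f^q$-invariance of $\nu$ together with boundedness of $\widehat O$) gives $\rot(\widehat f, \mu_f) = p/q$; on the other hand, ergodic decomposition of $\mu_f$ combined with the fact that every point of $\partial K$ has rotation number $\rho$ gives $\rot(\widehat f, \mu_f) = \rho$. Hence $p/q = \rho$, finishing the proof.

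The main obstacle is the topological claim in the second paragraph, namely that two disjoint essential continua $K_\pm$ in $\A$ cobound an open essential annular region contained in $\inte(K)$; this requires a careful argument about the positions of $V_\pm$ relative to the $K_\pm$ and the structure of disjoint essential continua in the annulus. A secondary subtlety is arguing that every connected component of $\partial V_\pm$ (not only the essential one) shares the same rotation number, which comes from the uniqueness of the prime end rotation number on each side of $V_\pm$ via the Koropecki argument underlying Proposition \ref{pr:boundaryannulus}.
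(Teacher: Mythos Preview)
Your overall strategy coincides with the paper's: show $\partial U_-(K)$ and $\partial U_+(K)$ share a common rotation number $\rho$ via Proposition~\ref{pr:boundaryannulus} and the inessentiality of $\inte(K)$, then show each component $O$ of $\inte(K)$ is a periodic disk whose rotation number matches $\rho$ by comparing with $\partial O\subset\partial K$. However, there is a genuine gap in your treatment of the interior components.

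You assert that ``$O$ is inessential, so any lift $\widehat O$ of $O$ to the universal cover of $\A$ is bounded.'' This inference is false: inessentiality of $O$ only guarantees that each connected component of $\widehat\pi^{-1}(O)$ projects homeomorphically onto $O$, not that it is bounded. A thin simply connected neighborhood of a spiral winding around the annulus and accumulating on an essential circle is inessential and pre-compact in $\A$, yet its lift is unbounded in the $\R$-direction. Nothing in the hypotheses rules out such an $O$ arising as a component of $\inte(K)$. Both of your subsequent claims---that every $z\in O$ has rotation number $p/q$ ``by a direct estimate using boundedness of $\widehat O$'', and that $\rot(\widehat f,\mu_f)=p/q$ because ``the $\widehat g$-contribution integrates to zero \dots\ together with boundedness of $\widehat O$''---collapse once $\widehat O$ is allowed to be unbounded: the displacement $p_1\circ\widehat g-p_1$ need not be bounded on $\overline{\widehat O}$, so neither the pointwise nor the integrated argument goes through.

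The paper confronts exactly this dichotomy. In the bounded case it argues as you do (every point of $\overline O$ has rotation number $p_0/q_0$, hence $\rho=p_0/q_0$ via $\partial O\subset\partial K$). In the unbounded case it invokes Lemma~\ref{lm:unbounded_disk}, which uses a triple boundary lemma to produce a point $\bar z\in\partial O$ with $\widehat f^{q_0}(\widehat{\bar z})=\widehat{\bar z}+(p_0,0)$; since $\bar z\in\partial K$ already has rotation number $\rho$, one again gets $\rho=p_0/q_0$. You need an analogous device for the unbounded case; your measure-theoretic detour through $\mu_f$ does not supply one.
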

\begin{proof}
Since $K'$ is essential, so is $K$. By Proposition \ref{pr:boundaryannulus}, one knows that there exist $\rho_1$ and $\rho_2$ such that every point in $\partial U_{-}(K)$ has rotation number $\rho_1$, and that any point in $\partial U_{+}(K)$ has rotation number $\rho_2$. If the interior of $K$ is inessential, we have that $\partial U_{-}(K)$ and $\partial U_{+}(K)$ are not disjoint, and so $\rho_1$ and $\rho_2$ are the same. Note that $\partial K=\partial U_{-}(K)\cup\partial U_{+}(K)$. Let $O$ be a connected component of the interior of $K$, and note that again $O$ is an $f$-periodic open topological disk in $\A$, and there exist integers $p_0, q_0$ with $q_0$ positive, such that if $\widehat O$ is a connected component of $\widehat{\pi}^{-1}(O)$, then $\widehat f^{q_0}(\widehat O)= \widehat O+ (p_0,0)$. We claim that $\rho_1=p_0/q_0$. Indeed, if $\widehat O$ is bounded, then every point in the closure of $O$ has the same rotation number and it is $p_0/q_0$, and since $\partial O\subset \partial K$ we get the claim, and if $\widehat O$ is unbounded, then by Lemma \ref{lm:unbounded_disk} there exists a point $\bar z\in \partial O$ such that $f^{q_0}(\bar z)=\bar z$ and such that the rotation number of $\bar z$ is $p_0/q_0$. Since every point in $\partial O\subset \partial K$ has the same rotation number, we deduce that  $\rho_1=p_0/q_0$. Since $O$ was arbitrary, one deduces that any recurrent point in $K$ has rotation number $\rho_1$, and therefore any ergodic measure supported in $K$ has rotation number $\rho_1$. But if a compact invariant set is such that every ergodic measure supported on it has the same rotation number, this implies that every point in the set has a well defined rotation number and that this number is $\rho_1$.
 \end{proof}

\subsection{Regions of instability}

Let $A$ be an open topological annulus, denote by $S, N$ the two topological ends of $A$ and let $f:A\to A$ be a homeomorphism  preserving both the orientation and the ends of $A$. There are two classical definitions of regions of instability: we say that $A$ is a {\it Birkhoff region of instability} if for any $U, V$, neighborhoods of $S$ and $N$ respectively, there exist $n_1, n_2>0$ such that $f^{n_1}(U)\cap V\not=\emptyset$ and such that $f^{n_2}(V)\cap U\not=\emptyset$. We note that, if the dynamics of $f$ is such that every point is non-wandering, then equivalently $A$ is a Birkhoff region of instability if for any $U, V$, neighborhoods of $S$ and $N$ respectively, the full orbit of $U$ intersects $V$. This implies that,  if the dynamics of $f$ is non-wandering and $A$ is not a Birkhoff region of instability, then there exists $O_{S}$, $O_{N}$, neighborhoods of $S$ and $N$ respectively, which are $f$-invariant and disjoint.

We say that $A$ is a {\it Mather region of instability} if there exist points $z_1, z_2$ in $A$ such that both $f^{-n}(z_1)$ and $f^{n}(z_2)$ converge to $S$ when $n$ goes to infinity, and such that both $f^{n}(z_1)$ and $f^{-n}(z_2)$ converge to $N$ when $n$ goes to infinity. One clearly has that a Mather region of instability is also a Birkhoff region of instability.  Let us introduce a new definition, which is stronger than the first one but weaker than the second. We say that $A$ is a {\it $SN$ mixed region of instability} if, for every neighborhood $U$ of $S$, one can find points $z_1, z_2$ in $U$ such that both $f^{n}(z_1)$ and $f^{-n}(z_2)$ converge to $N$ when $n$ goes to infinity. One defines similarly a $NS$ mixed region of instability if for every neighborhood $V$ of $N$, one can find points $z_1, z_2$ in $V$ such that both $f^{n}(z_1)$ and $f^{-n}(z_2)$ converge to $S$ when $n$ goes to infinity.

A general question which is unknown is to give conditions such that any Birkhoff region of instability must also be a Mather region of instability. This was shown to hold for twists maps, and also generically for area-preserving homeomorphisms (\cite{Mather-regionsofinstavility} and \cite{franks/lecalvez:2003} respectively). Our next proposition shows that, in the area-preserving context, that usually if there exists an annulus that is a Birkhoff region of instability, one can always find a subannulus that is a $SN$ mixed region of instability.

\begin{prop}\label{pr:semi-mather}
Let $f:\Ac\to\Ac$ be a homeomorphism which is isotopic to the identity that preserves a measure of full support such that its interior is a Birkhoff region of instability for the restriction of $f$. Assume further that the rotation set of the restriction of $f$ to $\Ac$ is not a single point.  Then there exists some $A_2\subset \A$, which is also an open topological annulus, such that $A_2$ is a $SN$ mixed region of instability and such that the rotation set of the restriction of $f$ to $A_2^*$ is the same as that of $f$.
\end{prop}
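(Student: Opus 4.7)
The plan is to split into two cases according to whether $\A$ is itself already an $SN$ mixed region of instability for $f$. In the first case, one takes $A_2=\A$: its prime ends compactification $A_2^*$ is canonically identified with $\Ac$ and the unique continuous extension of $f|_{\A}$ to $A_2^*$ is $f$ itself, so $\rot(\widehat{f^*},A_2^*)=\rot(\widehat f,\Ac)$ holds trivially. The genuine work lies in the second case, where $\A$ is not $SN$ mixed. After applying the construction below to $f^{-1}$ if necessary (which only changes the sign of the rotation set and swaps the two defining conditions of $SN$ mixed), we may assume there is a neighborhood $V$ of $\T{1}\times\{0\}$ such that no $z\in V$ satisfies $f^n(z)\to \T{1}\times\{1\}$.

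Given this, consider the family $\mathcal{W}$ of all $f$-invariant open connected essential subsets $W\subset \A$ which contain some annular neighborhood $\T{1}\times(0,\eta)$ of the south boundary and in which no forward orbit converges to $\T{1}\times\{1\}$. The family is nonempty (the connected component of $\bigcup_{n\in\Z}f^{n}(V)$ through a suitable $\T{1}\times(0,\eta)$ lies in it) and closed under arbitrary unions, so it admits a largest element $W_0$. Define $A_2$ as the filling of $W_0$, i.e., $W_0$ together with all precompact connected components of $\A\setminus W_0$. Then $A_2$ is an $f$-invariant essential open annulus whose south end is $\T{1}\times\{0\}$ and whose north end is either $\T{1}\times\{1\}$ (when $W_0=\A$) or an $f$-invariant essential continuum $K_N\subset\A$.

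The central and most delicate step is to verify that $A_2$ is $SN$ mixed. Suppose first that $A_2\neq\A$, with north end the essential continuum $K_N\subset\A$. Given a neighborhood $U$ of the south end inside $A_2$, we need $z_1,z_2\in U$ with $f^{n}(z_1)\to K_N$ and $f^{-n}(z_2)\to K_N$. The existence of $z_1$ follows by maximality of $W_0$: if no such $z_1$ existed, the union of $W_0$ with the $f$-saturation of the set of points in $U$ whose forward orbit avoids some fixed neighborhood of $K_N$ would produce an element of $\mathcal{W}$ strictly containing $W_0$, a contradiction. The existence of $z_2$ follows by the symmetric argument, using that since $f$ is non-wandering (it preserves a full support measure), $\A$ is also a Birkhoff region of instability for $f^{-1}$, so that the same construction applied to $f^{-1}$ yields $z_2$. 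The subcase $W_0=\A$---where no orbit of $\A$ converges to $\T{1}\times\{1\}$---must be handled separately: here one exploits the non-triviality of $\rot(\widehat f)$ to locate an $f$-invariant essential continuum $K\subset\A$ disjoint from both boundary components (e.g., obtained from the support of an ergodic measure realizing a non-extremal rotation number combined with Proposition~\ref{pr:boundaryannulus}), and replaces $\T{1}\times\{1\}$ by $K$ in the definition of $\mathcal{W}$ to obtain a proper sub-annulus. The main anticipated obstacle is rigorously executing this maximality/enlargement argument, especially in the $W_0=\A$ subcase.

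Finally, the identity $\rot(\widehat{f^*},A_2^*)=\rot(\widehat f,\Ac)$ is obtained from the correspondence between measures. Every $f^*$-invariant probability measure on $A_2^*$ is either supported in $A_2\subset\Ac$ (and has the same rotation number under $\widehat{f^*}$ and $\widehat f$) or is supported on one of the two prime end circles of $A_2^*$, in which case Proposition~\ref{pr:boundaryannulus} identifies its rotation number with the common rotation number of all points in the corresponding boundary continuum of $A_2$, itself the rotation number of an ergodic $f$-invariant measure on $\Ac$. Conversely, any ergodic $f$-invariant measure on $\Ac$ is supported either in $A_2$ or in the closure of a connected component of $\Ac\setminus A_2$; in the second case Proposition~\ref{pr:boundaryannulus} again gives that its rotation number equals a prime end rotation number of $A_2^*$. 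Thus the two rotation sets coincide.
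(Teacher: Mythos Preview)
Your maximality scheme does not deliver the $SN$ mixed property. The family $\mathcal{W}$ is defined by the condition ``no forward orbit converges to $\T{1}\times\{1\}$,'' and maximality of $W_0$ in that family says nothing about convergence to the \emph{new} north end $K_N=\partial_N A_2$. In your argument for $z_1$ you assume no point of $U\subset A_2$ has forward orbit converging to $K_N$, and then propose to enlarge $W_0$ by adjoining the $f$-saturation of the points of $U$ whose forward orbit avoids a neighborhood of $K_N$. But those points already lie in the $f$-invariant set $A_2$, so their saturation stays in $A_2$; nothing is added and no contradiction with maximality arises. The same defect affects $z_2$, and the $W_0=\A$ subcase is not handled either: Proposition~\ref{pr:boundaryannulus} does not produce an invariant essential continuum out of the support of an ergodic measure. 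There is also a second, independent gap in the rotation-set comparison: Proposition~\ref{pr:boundaryannulus} controls only the rotation number on the boundary continuum $K_N$, not on measures supported strictly between $K_N$ and $\T{1}\times\{1\}$ (e.g.\ on $\T{1}\times\{1\}$ itself), so the inclusion $\rot(\widehat f,\Ac)\subset\rot(\widehat{f^*},A_2^*)$ is not established.

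The paper takes a different and more concrete route that avoids both issues. For small $\varepsilon$ it introduces the forward Birkhoff set $B_\varepsilon(f)$ and the fully invariant set $\omega_\varepsilon$ (the component through $\T{1}\times\{1\}$ of the points whose entire orbit stays in $\T{1}\times[\varepsilon,1]$), and proves a stabilization lemma: there is $\varepsilon_0$ with $\omega_\varepsilon=\omega_{\varepsilon_0}$ for every $\varepsilon<\varepsilon_0$. This is the technical heart, proved via a separation argument together with a periodic point of negative rotation number furnished by Franks' theorem and the nontrivial rotation set. One then sets $A_2=\A\setminus\omega_{\varepsilon_0}$. The $SN$ mixed property follows directly, since $B_\varepsilon(f)$ and $B_\varepsilon(f^{-1})$ each meet $\T{1}\times\{\varepsilon\}$ and the $\omega$- (resp.\ $\alpha$-) limit of any such point lies in $\omega_\varepsilon=\omega_{\varepsilon_0}$; and the rotation-set comparison is immediate because $\omega_{\varepsilon_0}$ is a single invariant set containing $\T{1}\times\{1\}$ with inessential interior, so every point on it (hence every measure supported there) has the same rotation number.
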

(See Figure \ref{fig:Proofprop218} Left).

\begin{center}
\begin{figure}[h!]
  \centering
    \includegraphics{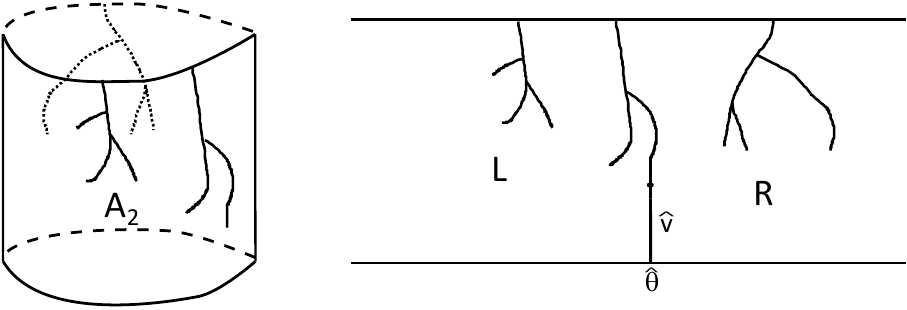}
  \caption{Left side: the annulus $A_2$ in the statement of Proposition \ref{pr:semi-mather}  Right side: The segment $\widehat{v}$ on the proof of Lemma \ref{lm:Lemma219}.}
  \label{fig:Proofprop218}
\end{figure}
\end{center}

Before we begin with the proof of this proposition, let us repeat a construction that dates back to Birkhoff itself, and has more recently been used in the study of homeomorphisms of the annulus and of the $2$-dimensional torus in \cite{transitiveannulusI, transitivetorus, boundedunbounded}. We refer the reader to Section 4.3 of \cite{boundedunbounded} for more details of the constructions. For the remainder of this section we assume that we are under the hypothesis of Proposition \ref{pr:semi-mather}.

For every $0<\varepsilon<1$, let $B_{\varepsilon}(f)$ be the connected component of the set
$$\bigcap_{i\in\N}f^{-i}\left(\T{1}\times[\varepsilon, 1] \right)$$
that contains $\T{1}\times\{1\}$, and we omit the dependence on $f$ whenever the context permit. Its a classical result that $B_{\varepsilon}$ intersects the circle $\T{1}\times\{\varepsilon\}$ and is forward invariant. We also define the set $\omega_{\varepsilon}(f)$ as the connected component of
$$\omega_{\varepsilon}=\bigcap_{i\in\Z}f^{-i}\left(\T{1}\times[\varepsilon, 1] \right)$$
that contains $\T{1}\times\{1\}$, and again we omit the dependence on $f$ whenever the context permit. Let us point remark that $\omega_{\varepsilon}(f)=\omega_{\varepsilon}(f^{-1})$. One verifies trivially that $\omega_{\varepsilon}$ is closed, invariant, and that the $\omega$-limit set of any point in $B_{\varepsilon}$ is contained is $\omega_{\varepsilon}$. Furthermore, since we are assuming that  $\A$ is a Birkhoff region of instability, one has that for each $\varepsilon>0$, the interior of $B_{\varepsilon}$ does not contain an essential annulus.

One also verifies that if $\varepsilon_1>\varepsilon_2$, then $\omega_{\varepsilon_1}\subset\omega_{\varepsilon_2}$. On the other hand, if in this situation $\omega_{\varepsilon_2}\subset \T{1}\times[\varepsilon_1,1]$, then $\omega_{\varepsilon_2}=\omega_{\varepsilon_1}$.

\begin{lemma}\label{lm:Lemma219}
There exists $\varepsilon_0$ such that, if $\varepsilon<\varepsilon_0$, then $\omega_{\varepsilon}=\omega_{\varepsilon_0}$.
\end{lemma}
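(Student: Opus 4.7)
The plan is to reduce the statement to showing that a certain monotone limit is positive, and then to rule out the degenerate case using the Birkhoff instability hypothesis.

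Define $\eta(\varepsilon):=\min\{y\in[0,1]\colon \omega_\varepsilon\cap(\T{1}\times\{y\})\neq\emptyset\}$; by compactness of $\omega_\varepsilon$ this minimum exists, and $\eta(\varepsilon)\geq\varepsilon$ since $\omega_\varepsilon\subset\T{1}\times[\varepsilon,1]$. The nesting $\omega_{\varepsilon'}\supset\omega_\varepsilon$ whenever $\varepsilon'\leq\varepsilon$, recalled just before the lemma, shows that $\eta$ is non-decreasing in $\varepsilon$, so $\eta^*:=\lim_{\varepsilon\to 0^+}\eta(\varepsilon)=\inf_{\varepsilon>0}\eta(\varepsilon)$ exists in $[0,1]$. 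If $\eta^*>0$, pick any $\varepsilon_0\in(0,\eta^*)$; then for every $\varepsilon<\varepsilon_0$ one has $\omega_\varepsilon\subset\T{1}\times[\eta^*,1]\subset\T{1}\times[\varepsilon_0,1]$, and the second observation preceding the lemma immediately yields $\omega_\varepsilon=\omega_{\varepsilon_0}$. Thus the lemma reduces to the positivity of $\eta^*$.

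To prove this, I would argue by contradiction. Suppose $\eta^*=0$; then there exist $\varepsilon_n\searrow 0$ and points $z_n\in\omega_{\varepsilon_n}$ whose second coordinates go to $0$, and after passing to a subsequence, $z_n\to z_\infty\in\T{1}\times\{0\}$. Set
\[
\omega_*:=\overline{\bigcup_{n\geq 1}\omega_{\varepsilon_n}}.
\]
Since the $\omega_{\varepsilon_n}$ form a nested increasing family of connected $f$-invariant continua each containing $\T{1}\times\{1\}$, $\omega_*$ is a closed, connected, $f$-invariant subset of $\Ac$ containing $\T{1}\times\{1\}$ and meeting $\T{1}\times\{0\}$ at $z_\infty$; its projection to the second coordinate is the connected set $[0,1]$, so $\omega_*$ meets every horizontal circle. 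Let $\widehat{\omega}_*$ denote the filling of $\omega_*$, namely the union of $\omega_*$ with all pre-compact connected components of $\Ac\setminus\omega_*$; this is a closed $f$-invariant essential continuum meeting both boundary components of $\Ac$. Using that $f$ preserves a full-support measure (hence is non-wandering) together with the Birkhoff instability of $\A$, one obtains orbits starting in every neighborhood of $\T{1}\times\{1\}$ and visiting every neighborhood of $\T{1}\times\{0\}$. These orbits cannot lie entirely inside $\bigcup_n\omega_{\varepsilon_n}$ (each $\omega_{\varepsilon_n}\subset\T{1}\times[\varepsilon_n,1]$), nor entirely inside the nested $f$-invariant essential open annuli $U_n$ defined as the connected components of $\Ac\setminus\omega_{\varepsilon_n}$ containing $\T{1}\times\{0\}$; tracking how such orbits must cross between the $\omega_{\varepsilon_n}$ and the $U_n$ and using that $\widehat{\omega}_*$ is an invariant essential continuum would then contradict the consequence, stated above the lemma, that the interior of $B_\varepsilon$ cannot contain an essential annulus.

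The main obstacle will be this last step: making the topological incompatibility precise. Since $\omega_*$ itself need not separate $\Ac$, the argument must be performed on the filling $\widehat{\omega}_*$ (or directly on the nested structure of the $U_n$), and a careful use of prime ends in the spirit of Proposition \ref{pr:boundaryannulus} is likely required to control how the $U_n$ shrink. A cleaner alternative, and probably the intended route, would be to show that $\bigcap_n U_n$ contains an $f$-invariant essential open annulus, contradicting directly the non-existence of essential annuli in the interior of any $B_\varepsilon$ that follows from the Birkhoff instability hypothesis.
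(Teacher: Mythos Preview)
Your reduction to showing $\eta^*>0$ is fine, but the contradiction you sketch after assuming $\eta^*=0$ has a genuine gap, and it is not a mere technicality. Having the invariant continuum $\omega_*$ (or its filling) touch both boundary circles does not conflict with Birkhoff instability; indeed nothing you wrote rules out $\omega_*=\Ac$. Your alternative route, arguing that $\bigcap_n U_n$ contains an invariant essential open annulus, is precisely what would fail if $\eta^*=0$: the $U_n$ are decreasing and each contains $\T{1}\times[0,\varepsilon_n)$, but under your assumption $\varepsilon_n\to 0$ and the minimum heights $\eta(\varepsilon_n)\to 0$, so there is no reason for the intersection to contain any neighborhood of $\T{1}\times\{0\}$, let alone an essential annulus. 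The appeal to ``the interior of $B_\varepsilon$ contains no essential annulus'' is about $B_\varepsilon$, not about the complements $U_n$, and I do not see how it transfers.

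The paper's argument is structurally different and uses ingredients you never invoke. It passes to a suitable power $g$ with lift $\widehat g$ whose rotation set contains $[-1,1]$ and whose boundary rotation numbers avoid $\{-1,0,1\}$, then chooses $\varepsilon_0$ so small that balls of that radius centered on $\R\times\{0\}$ are free for $\widehat g$. Assuming some $\omega_{\varepsilon_1}$ with $\varepsilon_1<\varepsilon_0$ dips to a height $\delta\le\varepsilon_0$, one takes a short vertical segment $\widehat v$ from the lower boundary up to a lowest point of $\widehat{\omega_{\varepsilon_1}}$; the set $\widehat v\cup\widehat{\omega_{\varepsilon_1}}$ then separates the strip into a left piece $L$ and a right piece $R$. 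Because the boundary rotation number is nonzero and $\widehat v$ is free, one shows $\widehat g(R)\subset R$. But Franks' theorem provides a periodic point of rotation number $-1$ for $\widehat g$; by Proposition~\ref{pr:continuumwithinessentialinterior} it cannot lie in $\omega_{\varepsilon_1}$, so some translate of its lift lies in $R$, yet forward iteration by $\widehat g$ drives it into $L$---a contradiction. The essential inputs are the rotation-theoretic information (periodic points with prescribed rotation number, boundary rotation numbers) and the barrier construction in the universal cover; your outline uses neither, and without them the contradiction does not materialize.
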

\begin{proof}
First note that, for every positive integer $p$, $\omega_{\varepsilon}(f)\subset\omega_{\varepsilon}(f^p)$. Let $g$ be a power of $f$ and $\widehat{g}$ be a lift of $g$ such that the $\rot(\widehat g)$ contains the interval $[-1,1]$, and such that the rotation number of the restriction of $g$ to both boundaries does not belong to $\{-1,0,1\}$. There exists some $\varepsilon_0$ such that, if $\hat{z}\in\R\times\{0\}$, then the ball with radius $\varepsilon_0$ and center $\widehat z$ is free for $\widehat g$. Suppose, for a contradiction that there exists some $\varepsilon_1<\varepsilon_0$ such that $\omega_{\varepsilon_1}=\omega_{\varepsilon_1}(g)$ is not equal to $\omega_{\varepsilon_0}=\omega_{\varepsilon_0}(g)$. If this is true, and if $$\delta=\min_{s\in [0,1]}\{\exists \theta\in\T{1}, (\theta,s)\in \omega_{\varepsilon_1}\},$$
then $\varepsilon_1\le\delta\le\varepsilon_0$, because otherwise one would have that $\omega_{\varepsilon_1}$ is $g$-invariant and contained in $\bigcap_{i\in\Z}g^{-i}\left(\T{1}\times[\varepsilon_0, 1] \right)$, and therefore must be a subset of  $\omega_{\varepsilon_0}$. There exists some $\theta\in\T{1}\times\{0\}$ such that $(\theta,\delta)\in\omega_{\varepsilon_1}$. Let $v=\theta\times[0,\delta)$ be a line segment. Note that $v$ is disjoint from $\omega_{\varepsilon_1}$.

Let now $\widehat v$ be a connected component of $\widehat{\pi}^{-1}(v)$, that contains a point $(\widehat{\theta},0)$ with $0\le \widehat{\theta}<1$, let $\widehat{\omega_{\varepsilon_1}}=\widehat{\pi}^{-1}(\omega_{\varepsilon_1})$ and let $F=\widehat v\cup \widehat{\omega_{\varepsilon_1}}$. First note that the complement of $\widehat{\omega_{\varepsilon_1}}$ has a connected component $A$ that contains the strip $\R\times [0,\delta)$. We claim that $A$ is the unique connected component of the complement of $\widehat{\omega_{\varepsilon_1}}$. Indeed, the complement of $A$ is invariant and contained in $\R\times[\varepsilon_1,1]$. Furthermore, since any connected component of $\widehat{\pi}(A)^C$ has a point of $\omega_{\varepsilon_1}$ and the later is connected, we have that $\widehat{\pi}(A)^C$ is connected, and therefore it is contained in $\omega_{\varepsilon_1}$.

The complement of $F$ can have at most two connected components, $L$, which contains $(-\infty,\widehat{\theta})\times\{0\}$, and $R$, which contains $(\widehat{\theta},\infty)\times\{0\}$. Let us show that these are different connected components. (See Figure \ref{fig:Proofprop218} Right). If not, there would be an arc $\gamma$ joining $(-1, 0)$ and $(1,0)$ entirely contained in $F^C$. If $\beta=\gamma\cup\left([-1,1]\times\{0\}\right)$, then $\beta$ is the image of a simple closed curve,  which is disjoint from $\widehat{\omega_{\varepsilon_1}}$ but such that the point $(\widehat{\theta},\delta)$ is in a different connected component from the complement of $\beta$ than $\R\times\{1\}$. This contradicts the fact that of $\omega_{\varepsilon_1}$ is connected and also contains $\T{1}\times\{1\}$.

Note also that, if $\hat z$ is a point in the complement of $\widehat{\omega_{\varepsilon_1}}$, one can find an arc $\alpha$ in $(\widehat{\omega_{\varepsilon_1}})^C$ joining $\hat z$ to a point $(a,0)$. Since $\widehat{\omega_{\varepsilon_1}}$ is invariant by integer horizontal translations, one gets that if $p>\abs{a}+1$ is sufficiently large, $\alpha+(p,0)$ and $\alpha-(p,0)$ are both disjoint from  $\widehat{v}$ and $\widehat{\omega_{\varepsilon_1}}$. One checks that $\alpha+(p,0)\subset R$, since it contains $(a+p,0)$ and that $\alpha-(p,0)\subset L$, since it contains $(a-p,0)$. We get that, for each $\widehat z\notin \widehat{\omega_{\varepsilon_1}}$, there exists a sufficiently large $p$ such that $\widehat{z}+(p,0)$ is in $R$ and $\widehat{z}-(p,0)$ is in $L$.

There are two cases to consider. Either the rotation number of $\T{1}\times\{0\}$ for $\widehat{g}$ is positive or it is negative. The rest of the proof is similar in both situations, so we will assume that it is positive. This implies that $\widehat{g}\left((\widehat{\theta},0)\right)$ belongs to $R$ and $\widehat{g}^{-1}\left((\widehat{\theta},0)\right)$ belongs to $L$. Since $\widehat{v}$ is disjoint from $\widehat{\omega_{\varepsilon_1}}$, we get that the image of $\widehat{v}$ by $\widehat{g}$ is contained in $R$, and that its preimage is contained in $L$. This implies that $\widehat{g}(R)$ does not intersect $\widehat{v}$. Since $\widehat{g}(R)$ is also disjoint from $\widehat{\omega_{\varepsilon_1}}$ and is connected, one has that it must be contained in a connected component of the complement of $F$, and as the image by $\widehat{g}$ of $(\widehat{\theta}, \infty)\times\{0\}$ intersects itself, we deduce that $\widehat{g}(R)\subset R$.

Finally, since $g$ preserves a measure of full support, by the hypothesis on the rotation set of $\widehat{g}$, one can find  there exists some $\bar{z}$ in $\Ac$ such that, if $\widehat{\bar{z}}$ is a lift of $\bar{z}$, then $\widehat{g}(\widehat{\bar{z}})=\widehat{\bar{z}}-(1,0)$. By Proposition \ref{pr:continuumwithinessentialinterior}, one knows that every point $\omega_{\varepsilon_1}$ has the same rotation number for $\widehat{g}$, and since $\T{1}\times\{1\}$ belongs to $\omega_{\varepsilon_1}$, this number is not $-1$. This implies that $\bar{z}$ does not belong to $\omega_{\varepsilon_1}$, and therefore there exists some integer $p$ such that $\widehat{\bar{z}}+(p,0)$ belongs to $R$, and such that $\widehat{g}^{2p}(\widehat{\bar{z}}+(p,0))=\widehat{\bar{z}}-(p,0)$ belongs to $L$, which is a contradiction since that $R$ is positively invariant. This shows that $\omega_{\varepsilon}(g)\subset\omega_{\varepsilon_0}(g)$ for all $\varepsilon<\varepsilon_0$.

Now, if $\varepsilon<\varepsilon_0$, since $\omega_{\varepsilon}(f)\subset \omega_{\varepsilon}(g)$, we get that $\omega_{\varepsilon}(f)\subset \omega_{\varepsilon_0}(g)\subset \T{1}\times [\varepsilon_0, 1]$, and since $\omega_{\varepsilon}(f)$ is $f$-invariant, connected and contains $\T{1}\times \{1\}$, we deduce that it is contained in $\omega_{\varepsilon_0}(f)$. Since it holds that $\omega_{\varepsilon_0}(f)\subset\omega_{\varepsilon}(f)$, we have the result.
\end{proof}

\textit{End of the proof of Proposition \ref{pr:semi-mather}.} Let $A_2$ be the complement of $\omega_{\varepsilon_0}\cup (\T{1}\times \{0\})$. Note that $A_2$ is open, contains $\T{1}\times (0, \varepsilon_0)$ and therefore separates $\T{1}\times \{0\}$ and $\T{1}\times \{1\}$, and its complement has exactly two connected components. Therefore $A_2$ is an essential open topological annulus. Let $S$ be the end of $A_2$ corresponding to $\T{1}\times \{0\}$, and let $N$ be the other end.

If $U$ is a neighborhood in $A_2$ of $S$, then there exists $\varepsilon>0$ such that $\T{1}\times (0, 2\varepsilon)$ is contained in $U$. As noted before, $B_{\varepsilon}(f)$ has a point in $z_1$ in $\T{1}\times \{\varepsilon\}  \subset U$ and the $\omega$-limit set of any point in $B_{\varepsilon}(f)$ is contained in $\omega_{\varepsilon}=\omega_{\varepsilon_0}$, one has that the future orbit of $z_1$ in $U$ converges to $N$. Likewise, one knows that $\omega_{\varepsilon}(f^{-1})=\omega_{\varepsilon}(f)=\omega_{\varepsilon_0}(f)$, and since $B_{\varepsilon}(f^{-1})$ has a point $z_2$ in $U$ whose $\omega$-limit set for $f^{-1}$, and therefore whose $\alpha$-limit set for $f$, is contained in $\omega_{\varepsilon_0}(f)$. This shows that $A_2$ is a $SN$ mixed region of instability.

Finally, since the rotation set of $f$ is not a single point, it is a non-empty interval and since $f$ preserves a measure of full support, one knows that for every $p/q$ in the rotation set of $f$ there exists a periodic point $z_{p/q}$ such that the rotation number of $z_{p/q}$ is $p/q$. Since every point in $\omega_{\varepsilon_0}$ has the same rotation number which is equal to the rotation number of the restriction of $f$ to $\T{1}\times \{1\}$, and since every point in $\T{1}\times \{0\}$ has the same rotation number, we get that for all but possibly two values of $p/q$ in the rotation set of $f$, $z_{p/q}$ must belong to $A_2$. Since the rotation set of the restriction of $f$ to $A_2^*$ must be closed, we deduce it must be equal to the full rotation set of $f$.

\section{Proof of Theorem A} \label{sec: proof of theorem A}

In this section, we prove Theorem A. Let $f$ be a homeomorphism of the closed annulus $\Ac=\T{1}\times [0,1]$ which is isotopic to the identity and preserves a Borel probability measure $\mu$ with full support. Let $\widehat{f}$ be a lift of $f$ to $\R\times [0,1]$. Replacing $f$ by a power $f^q$ and the lift $\widehat{f}$ by a lift $\widehat{f}^q+(p,0)$, one can suppose that $\rot(\widehat{f})=[\alpha,\beta]$ with $\alpha<0<1<\beta$. We can also assume that the rotation numbers of both boundary components of $\Ac$ are  not equal to $1$. Suppose by contradiction that $\rot(\widehat{f},\mu)=\alpha$. We will fix a maximal identity isotopy $I$ of $f$ that lifts to a maximal identity isotopy of $\widehat{f}$, and we also fix an oriented singular foliation $\mathcal{F}$ transverse to $I$ and its lift $\widehat{\mathcal{F}}$.

\subsection{Preliminaries}

We recall first Atkinson's Lemma on Ergodic Theory (see \cite{atkinson}).
\begin{prop}\label{lemmaatkinson}
  Let $(Y,\mathcal{B},\nu)$ be a probability space, and let $\fonc{T}{Y}{Y}$ be an ergodic automorphism. If $\fonc{\varphi}{Y}{\R}$ is an integrable map such that $\int_Y \varphi\, d\nu=0$, then for every $B\in\mathcal{B}$ and every real number $\epsilon>0$, one has
  $$  \nu \left( \left\{ y\in B, \; \exists n\geq 0,\; T^n(y)\in B \text{ and } \abs{\sum_{k=0}^{n-1} \varphi(T^k(y))} <\epsilon \right\} \right)=\nu(B). $$
\end{prop}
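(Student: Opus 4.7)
\medskip

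\noindent\textbf{Proof plan.} The standard approach to Atkinson's lemma is the skew-product construction of Hopf, combined with a recurrence theorem for conservative transformations of $\sigma$-finite measure spaces. On the product space $\widetilde{Y}:=Y\times\R$, equipped with the $\sigma$-finite measure $\widetilde{\nu}:=\nu\otimes\lambda$ (where $\lambda$ is Lebesgue measure), I define the $\R$-extension
$$\widetilde{T}(y,t):=(T(y),\, t+\varphi(y)).$$
Because $T$ preserves $\nu$ and vertical translation preserves $\lambda$, the map $\widetilde{T}$ preserves $\widetilde{\nu}$. Note that $\widetilde{T}^{n}(y,t)=(T^{n}(y),\, t+S_{n}\varphi(y))$, where $S_{n}\varphi(y)=\sum_{k=0}^{n-1}\varphi(T^{k}(y))$.

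The heart of the argument is to show that $\widetilde{T}$ is \emph{conservative}, that is, every measurable set of positive $\widetilde{\nu}$-measure returns to itself almost everywhere under some positive iterate of $\widetilde{T}$. Here is where both hypotheses enter essentially. Using Hopf's decomposition, I would write $\widetilde{Y}=\widetilde{C}\sqcup\widetilde{D}$ into its conservative and totally dissipative parts, both $\widetilde{T}$-invariant, and assume for contradiction that $\widetilde{\nu}(\widetilde{D})>0$. Projecting onto $Y$, ergodicity of $T$ forces the projection of $\widetilde{D}$ to have full $\nu$-measure, so that for $\nu$-a.e.\ $y$ and $\lambda$-a.e.\ $t$ the orbit of $(y,t)$ is eventually wandering. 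A careful analysis of the slices $\widetilde{D}\cap(\{y\}\times\R)$ combined with Birkhoff's ergodic theorem (which gives $S_{n}\varphi(y)/n\to 0$ for $\nu$-a.e.\ $y$ because $\int\varphi\,d\nu=0$) rules out the linear escape that dissipativity would require of the $\R$-coordinate of typical orbits, yielding the contradiction. This is Schmidt's cocycle-recurrence theorem specialised to real-valued coboundaries and is the main technical obstacle.

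Once conservativity is in hand, I would fix $B\in\mathcal{B}$ with $\nu(B)>0$ and $\epsilon>0$, and apply the Halmos recurrence theorem to $\widetilde{T}$ and to the set $\widetilde{B}:=B\times(-\epsilon/2,\,\epsilon/2)$, which has finite, positive $\widetilde{\nu}$-measure equal to $\nu(B)\cdot\epsilon$. This yields that for $\widetilde{\nu}$-almost every $(y,t)\in\widetilde{B}$ there exists an integer $n\geq 1$ with $\widetilde{T}^{n}(y,t)\in\widetilde{B}$, i.e.\ $T^{n}(y)\in B$ and $|t+S_{n}\varphi(y)|<\epsilon/2$; in particular $|S_{n}\varphi(y)|<|t|+\epsilon/2<\epsilon$. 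Let $E\subset\widetilde{B}$ be the exceptional null set.

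To transfer this conclusion from $\widetilde{\nu}$-a.e.\ $(y,t)$ to $\nu$-a.e.\ $y\in B$, I invoke Fubini: since $\widetilde{\nu}(E)=0$, for $\nu$-a.e.\ $y\in B$ the slice $E_{y}=\{t\in(-\epsilon/2,\epsilon/2): (y,t)\in E\}$ has $\lambda$-measure zero, hence is a proper subset of $(-\epsilon/2,\epsilon/2)$, so there exists some $t\in(-\epsilon/2,\epsilon/2)\setminus E_{y}$ providing an integer $n\geq 1$ with $T^{n}(y)\in B$ and $|S_{n}\varphi(y)|<\epsilon$, which is the desired conclusion for $y$. The hardest step, as already noted, is establishing the conservativity of the skew product: this is where the combination of $\int\varphi\,d\nu=0$ and the ergodicity of $T$ plays a decisive role, and where one cannot simply cite Poincaré recurrence since the ambient measure is infinite.
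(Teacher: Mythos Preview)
The paper does not provide a proof of this proposition; it is simply recalled as a known result with a citation to Atkinson's original paper \cite{atkinson}. Your plan via the skew-product $\widetilde{T}(y,t)=(Ty,\,t+\varphi(y))$ on $Y\times\R$, conservativity of $\widetilde{T}$ (Schmidt's recurrence theorem for cocycles of zero mean over an ergodic base), Halmos recurrence applied to $B\times(-\epsilon/2,\epsilon/2)$, and Fubini to pass from $\widetilde{\nu}$-a.e.\ to $\nu$-a.e., is one of the standard correct routes to Atkinson's lemma. Two minor remarks: you mean ``cocycles'' rather than ``coboundaries'' when invoking Schmidt; and note that the statement as written in the paper allows $n=0$, which makes it vacuous, whereas your argument yields the genuine content with $n\geq 1$ (which is what the paper actually uses downstream, e.g.\ in Corollary~\ref{cr:atkinsonourcase}).
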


As a corollary we have the following result (see Corollary 4.6 of \cite{boundedunbounded}).
\begin{coro}\label{cr:atkinsonourcase}
Let $\nu$ be an ergodic invariant measure for $f$ and $\varphi:\Ac\to \R$ be an integrable map such that $\int_{\Ac}\varphi\, d\nu=0$. Then for $\nu$-almost every point $z\in\Ac$, there exists an increasing sequence $q_l\to +\infty$ such that $f^{q_l}(z)\to z$ and $\sum_{j=0}^{q_l-1}\varphi(f^{j}(z))\to 0$.
\end{coro}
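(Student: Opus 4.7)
The plan is to apply Atkinson's Lemma (Proposition \ref{lemmaatkinson}) to the first-return transformations of a countable basis of the topology of $\Ac$, then bootstrap the single-return conclusion into infinitely many returns at arbitrarily large times with Birkhoff sums tending to zero, and finally extract a diagonal subsequence.

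First, I would fix a countable basis $\{U_k\}_{k\in\N}$ of the topology of $\Ac$ and restrict attention to those $U_k$ with $\nu(U_k)>0$. For each such $U_k$, set $r_k(y)=\min\{n\geq 1:f^n(y)\in U_k\}$ and $T_k(y)=f^{r_k(y)}(y)$, which are defined $\nu$-almost everywhere on $U_k$ by Poincar\'e recurrence. By Kakutani's induced system construction, $T_k$ is ergodic for the conditional probability $\nu_k=\nu|_{U_k}/\nu(U_k)$, and by Kac's formula the induced observable $\varphi_k(y)=\sum_{j=0}^{r_k(y)-1}\varphi(f^j(y))$ has $\nu_k$-integral equal to $\nu(U_k)^{-1}\int_{\Ac}\varphi\,d\nu=0$. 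The key identity is $S_j\varphi_k(y)=S_{n_j(y)}\varphi(y)$, where $n_j(y)\geq j$ denotes the time of the $j$-th return of $y$ to $U_k$ under $f$, relating Birkhoff sums of $\varphi_k$ under $T_k$ to Birkhoff sums of $\varphi$ under $f$.

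Applying Proposition \ref{lemmaatkinson} to $(U_k,\nu_k,T_k,\varphi_k)$ with tolerance $1/m$ produces a $\nu_k$-full-measure set $E_{k,m}\subset U_k$ on which some $j\geq 1$ satisfies $|S_j\varphi_k(y)|<1/m$. To upgrade this to arbitrarily large return orders, I would prove by induction on $N$ that for every $m,N\geq 1$ there is a $\nu_k$-full-measure set $G_{k,m,N}\subset U_k$ on which some $j\geq N$ satisfies $|S_j\varphi_k(y)|<1/m$. Indeed, given the conclusion at level $N$ with tolerance $1/(2m)$, one chooses $j_1(y)\geq N$ with $|S_{j_1}\varphi_k(y)|<1/(2m)$; because the stopping-time induced map $y\mapsto T_k^{j_1(y)}(y)$ is itself $\nu_k$-preserving, for $\nu_k$-a.e.\ $y$ its image admits some further $j_2\geq 1$ with $|S_{j_2}\varphi_k(T_k^{j_1}(y))|<1/(2m)$, and concatenation yields a return at order $j_1+j_2\geq N+1$ with total sum bounded by $1/m$. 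Via the identity $S_j\varphi_k=S_{n_j}\varphi$, this gives some $n\geq N$ with $f^n(y)\in U_k$ and $|S_n\varphi(y)|<1/m$.

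Finally, set $G=\bigcap_{k,m,N}\bigl(U_k^c\cup G_{k,m,N}\bigr)$, a countable intersection of $\nu$-full-measure sets, hence of full $\nu$-measure. For every $z\in G$, every basic neighborhood $U_k$ of $z$ and every $m,N\geq 1$, there exists $n\geq N$ with $f^n(z)\in U_k$ and $|S_n\varphi(z)|<1/m$. Choosing for each $l\geq 1$ a basic $U_{k_l}$ containing $z$ of diameter less than $1/l$, tolerance $1/l$, and lower bound $N_l=q_{l-1}+1$, the property produces the desired $q_l>q_{l-1}$ with $f^{q_l}(z)\in U_{k_l}$ and $|S_{q_l}\varphi(z)|<1/l$. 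The main obstacle is the inductive bootstrap step guaranteeing returns of arbitrarily large order with arbitrarily small Birkhoff sums; this rests on the measure-preservation of Kakutani stopping-time induced maps, while the countable intersection and diagonal extraction are routine.
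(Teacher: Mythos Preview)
The paper does not give its own proof of this corollary; it simply refers to Corollary~4.6 of \cite{boundedunbounded}. Your argument is essentially correct and provides a self-contained derivation, but the justification of the bootstrap step contains an inaccuracy. You assert that the ``stopping-time induced map'' $y\mapsto T_k^{j_1(y)}(y)$ is $\nu_k$-preserving; this is false in general for an arbitrary measurable choice of $j_1$ (already for a circle rotation with a two-valued $j_1$ the map fails to preserve Lebesgue measure). Fortunately you do not need this. What you actually need is that $T_k^{j_1(y)}(y)\in G_{k,2m,1}$ for $\nu_k$-a.e.\ $y$, and this follows directly from the fact that $T_k$ itself preserves $\nu_k$: since $G_{k,2m,1}$ has full $\nu_k$-measure, so does $\bigcap_{j\ge 0}T_k^{-j}(G_{k,2m,1})$, and for any $y$ in this countable intersection one has $T_k^{j}(y)\in G_{k,2m,1}$ for \emph{every} $j$, in particular for $j=j_1(y)$. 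With this correction the inductive step goes through and the remainder of your argument (countable intersection over $k,m,N$ and diagonal extraction along a shrinking basis at $z$) is sound.

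As an aside, the passage through induced transformations is not strictly necessary. One may apply Proposition~\ref{lemmaatkinson} directly to $(\Ac,f,\nu,\varphi)$ with $B=U_k$ and tolerance $1/(mN)$, then iterate $N$ times using the same ``intersect all $f$-preimages of the Atkinson good set'' trick; concatenation yields a return time $n_1+\cdots+n_N\ge N$ with $f^{n_1+\cdots+n_N}(y)\in U_k$ and $\bigl|S_{n_1+\cdots+n_N}\varphi(y)\bigr|<1/m$. This reaches the same conclusion without invoking Kakutani or Kac.
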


We also have the following result which can be derived directly from Proposition 4.3 of \cite{boundedunbounded}.
\begin{lemma}\label{le:atkinsonourcase2}
For every borelian $B\subset \Ac$ such that $\mu(B)>0$, there exists an $f$-invariant ergodic measure $\nu$ such that $\rot(\widehat{f},\nu)=\alpha$ and such that $\nu(B)>0$.
\end{lemma}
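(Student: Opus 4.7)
The plan is to appeal directly to the ergodic decomposition of $\mu$ together with the fact that the map $\nu\mapsto\rot(\widehat f,\nu)=\int_{\Ac}\rho_1\,d\nu$ is affine on $\mathcal{M}_f(\Ac,\Ac)$. Write
$$\mu=\int_{\mathcal{M}^e_f(\Ac,\Ac)} \nu\,d\tau(\nu)$$
as given by the Ergodic Decomposition Theorem stated earlier in the excerpt. By the affine property of $\rot(\widehat f,\cdot)$,
$$\alpha=\rot(\widehat f,\mu)=\int \rot(\widehat f,\nu)\,d\tau(\nu).$$
Since $\alpha$ is the left endpoint of $\rot(\widehat f)$, the integrand satisfies $\rot(\widehat f,\nu)\ge \alpha$ for every $\nu$ in the support of $\tau$, hence equality holds $\tau$-almost everywhere. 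In other words, the set $E_1=\{\nu\in \mathcal{M}^e_f(\Ac,\Ac)\colon \rot(\widehat f,\nu)=\alpha\}$ has full $\tau$-measure. This is exactly the remark recalled in the excerpt about ergodic components of measures whose rotation number is an endpoint.

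Next, I would use the hypothesis $\mu(B)>0$. By the decomposition,
$$0<\mu(B)=\int \nu(B)\,d\tau(\nu),$$
so the set $E_2=\{\nu\in\mathcal{M}^e_f(\Ac,\Ac)\colon \nu(B)>0\}$ has strictly positive $\tau$-measure. Since $E_1$ has full $\tau$-measure and $E_2$ has positive $\tau$-measure, their intersection is non-empty; any $\nu\in E_1\cap E_2$ is an $f$-invariant ergodic measure with $\rot(\widehat f,\nu)=\alpha$ and $\nu(B)>0$, proving the lemma.

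There is no real obstacle here: the statement is essentially a packaging of the Ergodic Decomposition Theorem with the linearity of rotation numbers, and the only observation needed beyond routine measure theory is that the extremality of $\alpha$ forces the rotation number to be constant almost surely along the decomposition. Consequently, the proof should be one short paragraph citing the Ergodic Decomposition Theorem and the identity displayed above.
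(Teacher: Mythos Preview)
Your argument is correct and is exactly the standard ergodic-decomposition proof one expects here. The paper itself does not spell out a proof but merely cites Proposition~4.3 of \cite{boundedunbounded}; your paragraph supplies precisely the argument underlying that citation, so there is nothing to add.
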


Finally, we deduce the following proposition.
\begin{prop}\label{consequences}
  There exists a set $X_\alpha$ in $\Ac$ with full $\mu$ measure such that, for every $z\in X_\alpha$ we have
\begin{itemize}
  \item[(i)] $z$ is a bi-recurrent point of $f$;
  \item[(ii)] the rotation number of $z$ is well-defined and $\rot(\widehat{f},z)=\alpha$; and
  \item[(iii)] one can find a sequence $(p_l,q_l)_{l\in\N}$ in $(-\N)\times \N$ such that, if $\widehat{z}$ belongs to $\widehat \pi^{-1}(z)$:
  $$ \lim_{l\to +\infty}q_l=+\infty,  \quad \lim_{l\to +\infty} (p_l-\alpha q_l)  = 0, \quad \lim_{l\to +\infty} \widehat{f}^{q_l}(\widehat{z})-\widehat{z}-(p_l,0)= 0 $$
  and a sequence $(p'_l,q'_l)_{l\in\N}$ in $\N\times \N$ satisfying:
  $$\lim_{l\to +\infty}q'_l=+\infty,  \quad \lim_{l\to +\infty} (p'_l+\alpha q'_l)  = 0, \quad \lim_{l\to +\infty} \widehat{f}^{-q'_l}(\widehat{z})-\widehat{z}+(p'_l,0)= 0.  $$
\end{itemize}
\end{prop}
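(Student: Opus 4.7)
The proof is an assembly of standard ergodic-theoretic tools: the ergodic decomposition, the Birkhoff and Poincar\'e theorems, and Atkinson's lemma (Proposition \ref{lemmaatkinson}, Corollary \ref{cr:atkinsonourcase}). I would start from the ergodic decomposition $\mu=\int \nu\,d\tau(\nu)$ with $\tau$-a.e. $\nu$ ergodic. Because $\alpha$ is an endpoint of $\rot(\widehat f)$ and $\alpha=\rot(\widehat f,\mu)=\int\rot(\widehat f,\nu)\,d\tau(\nu)$ with each $\rot(\widehat f,\nu)\in\rot(\widehat f)$, the remark in \S 2.5.1 forces $\rot(\widehat f,\nu)=\alpha$ for $\tau$-a.e. $\nu$. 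Thus it suffices to produce, for each such ergodic $\nu$, a $\nu$-full measure set $X_\alpha(\nu)$ on which (i), (ii), (iii) hold; the union $X_\alpha=\bigcup_{\nu} X_\alpha(\nu)$ then has full $\mu$-measure by the definition of the decomposition.

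Fix such an ergodic $\nu$. Property (i) follows from Poincar\'e recurrence applied to both $(f,\nu)$ and $(f^{-1},\nu)$, giving bi-recurrence for $\nu$-a.e.\ $z$. Property (ii) follows from Birkhoff's ergodic theorem applied to the (bounded, hence integrable) displacement function $\rho_1$, which yields $\lim_n\frac{1}{n}\sum_{k=0}^{n-1}\rho_1(f^k(z))=\int\rho_1\,d\nu=\alpha$ for $\nu$-a.e.\ $z$; telescoping gives $\rot(\widehat f,z)=\alpha$. For the forward sequence in (iii), I would apply Corollary \ref{cr:atkinsonourcase} to the zero-mean integrable observable $\varphi=\rho_1-\alpha$: for $\nu$-a.e.\ $z$ there exist $q_l\to+\infty$ with $f^{q_l}(z)\to z$ and $\sum_{j=0}^{q_l-1}(\rho_1-\alpha)(f^j(z))\to 0$. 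Since the partial sums of $\rho_1$ telescope to $p_1(\widehat f^{q_l}(\widehat z))-p_1(\widehat z)$, we get $p_1(\widehat f^{q_l}(\widehat z))-p_1(\widehat z)-\alpha q_l\to 0$. Using $f^{q_l}(z)\to z$, for each $l$ there is a unique integer $p_l$ such that $\widehat f^{q_l}(\widehat z)-(p_l,0)$ lies in a prescribed fundamental domain around $\widehat z$; matching with the previous estimate gives $\widehat f^{q_l}(\widehat z)-\widehat z-(p_l,0)\to 0$ and $p_l-\alpha q_l\to 0$. Since $\alpha<0$, one has $p_l<0$ for $l$ large, so after passing to a subsequence $p_l\in -\N$.

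For the backward sequence, I would apply the same argument to the homeomorphism $f^{-1}$ with lift $\widehat f^{-1}$. The ergodic measure $\nu$ is also $f^{-1}$-invariant and ergodic, and the displacement function $\rho_1^{-}(z):=p_1(\widehat f^{-1}(\widehat z))-p_1(\widehat z)$ integrates to $-\alpha>0$ against $\nu$. Applying Corollary \ref{cr:atkinsonourcase} to $\varphi=\rho_1^{-}+\alpha$ and $f^{-1}$ yields $q'_l\to+\infty$ with $f^{-q'_l}(z)\to z$ and $p_1(\widehat f^{-q'_l}(\widehat z))-p_1(\widehat z)+\alpha q'_l\to 0$; choosing $p'_l\in\N$ so that $\widehat f^{-q'_l}(\widehat z)-\widehat z-(p'_l,0)\to 0$ produces the claimed sequence. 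There is no serious obstacle: everything is a direct application of the preliminary results, and the only care required is the bookkeeping of integer translations, which is made possible by the convergence $f^{\pm q_l}(z)\to z$ in the compact annulus and the continuity of the projection $\widehat\pi$.
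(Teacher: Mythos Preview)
Your proposal is correct and follows essentially the same route as the paper: reduce to ergodic measures with rotation number $\alpha$, then use Poincar\'e recurrence for (i), Birkhoff for (ii), and Corollary~\ref{cr:atkinsonourcase} applied to $\rho_1-\alpha$ (and to $\rho_1^{-}+\alpha$ for $f^{-1}$) for (iii). The only packaging difference is that the paper argues by contradiction via Lemma~\ref{le:atkinsonourcase2} (if the bad set $B$ had positive $\mu$-measure, pick an ergodic $\nu$ with $\rot(\widehat f,\nu)=\alpha$ and $\nu(B)>0$, then derive a contradiction), which sidesteps the measurability bookkeeping implicit in your uncountable union $\bigcup_\nu X_\alpha(\nu)$; your version is fine once one instead defines $X_\alpha$ as the (measurable) set of points satisfying (i)--(iii) and observes it has full $\nu$-measure for $\tau$-a.e.\ $\nu$.
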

\begin{proof}
Let $B$ be the complement of the set of points that satisfy properties (i) to (iii), and assume for a contradiction that $\mu(B)>0$. By Lemma \ref{le:atkinsonourcase2}, we can find an $f$-invariant ergodic measure $\nu$ such that $\rot(\widehat f,\nu)=\alpha$ and such that $\nu(B)>0$. Since $\nu$ is ergodic, $\nu$-almost every point in $B$ is bi-recurrent and has rotation number equal to $\alpha$. Applying Corollary \ref{cr:atkinsonourcase} using $\varphi(z)= p_1(\widehat{f}(\widehat{z})-\widehat{z})-\alpha$, one has that there exists a sequence of integers $q_l\to +\infty$ such that $f^{q_l}(z)\to z$ and $\sum_{j=0}^{q_l-1}\varphi(f^{j}(z))=p_1(\widehat{f}^{q_l}(\widehat{z})-\widehat{z})-q_i\alpha\to 0$.  Since $f^{q_l}(z)\to z$ one deduces there exists a sequence of integers $p_l$ such that $\widehat{f}^{q_l}(\widehat{z})-\widehat{z}-(p_l,0)\to(0,0)$ and so $p_l-q_l\alpha\to 0$. So $\nu$-almost every point in $B$ satisfy the first part of property (iii). A similar argument, using Corollary \ref{cr:atkinsonourcase} with $f^{-1}$ in place of $f$ and $\varphi'(z)= p_1(\widehat{f}^{-1}(\widehat{z})-\widehat{z})+\alpha$, gives us that $\nu$-almost every point in $B$ satisfy the second part of property (iii). Therefore $\nu$-almost every point of $B$ satisfy properties (i)-(iii), a contradiction.
\end{proof}

We have the following result, whose proof is immediate.
\begin{lemma} \label{consequences2}
 Let $\alpha<0$ and $r$ be a real number, and assume there exists a sequence $(p_l,q_l)_{l\in\N}$ in $(-\N)\times \N$ satisfying:
  $$ \lim_{l\to +\infty}q_l=+\infty,  \quad \lim_{l\to +\infty} (p_l-\alpha q_l)  = 0 .$$
  If $k>-\alpha r$, then there exists some integer  $l(k)\in \N $ such that for every integer $l\geq l(k)$, we have
   $$ \frac{p_l-k}{q_l+r}< \alpha. $$
\end{lemma}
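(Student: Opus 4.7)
The plan is to rearrange the target inequality into a form where the hypothesis $\lim_{l\to\infty}(p_l-\alpha q_l)=0$ can be applied directly, then dispose of sign issues using $q_l\to+\infty$.

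First, I would observe that since $q_l\to+\infty$, for all sufficiently large $l$ one has $q_l+r>0$. This lets us clear the denominator in $(p_l-k)/(q_l+r)<\alpha$ without flipping the inequality, obtaining the equivalent condition
$$p_l-k<\alpha(q_l+r)=\alpha q_l+\alpha r,$$
which rearranges to
$$p_l-\alpha q_l<k+\alpha r.$$

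Next, I would use the hypothesis $k>-\alpha r$, which (because $\alpha$ is a real number) is precisely the statement $k+\alpha r>0$. Since $p_l-\alpha q_l\to 0$ by assumption, there exists an integer $l(k)$ such that for every $l\geq l(k)$ we have $|p_l-\alpha q_l|<k+\alpha r$, and in particular $p_l-\alpha q_l<k+\alpha r$, which is the desired inequality. Taking $l(k)$ large enough so that $q_l+r>0$ simultaneously holds completes the argument.

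Since this is purely an elementary manipulation of limits, there is no real obstacle; the lemma is essentially the algebraic rearrangement of the limit hypothesis combined with the sign condition $k+\alpha r>0$. The sign of $\alpha$ itself does not play a role in the computation (only $k+\alpha r>0$ does), which is why the hypothesis is stated as $k>-\alpha r$ rather than $k>|\alpha|r$ or similar.
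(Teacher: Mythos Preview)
Your argument is correct and is precisely the elementary manipulation one expects; the paper itself gives no proof, stating only that the result is immediate. Your remark that the sign of $\alpha$ plays no role beyond the condition $k+\alpha r>0$ is also accurate.
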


\subsection{Proof of Theorem A}

Note that $\rot(\widehat{f})=[\alpha,\beta]$ and $\alpha<0<1<\beta$. We recall that we are assuming that $f$ preserves a Borel probability measure of full support, so by Frank's Theorem (Theorem \ref{frankstheorem}) one can find a fixed point $z$ of $f$ in the interior of the annulus such that, if $\widehat z$ is a lift of $z$, then $\widehat{f}(\widehat{z})=\widehat z +(1,0)$. Let $I^{\Z}_{\mathcal{F}}(z)$ be the whole transverse trajectory of $z=\widehat{\pi}(\widehat{z})$. We start by recalling some facts about the transverse path $I^{\Z}_{\mathcal{F}}(z)$. By definition, we have $I^{\Z}_{\mathcal{F}}(z)(0)=I^{\Z}_{\mathcal{F}}(z)(1)$. Let $\Gamma'$ be the loop naturally defined by the closed path $I^{\Z}_{\mathcal{F}}(z)|_{[0,1]}$. We know that every leaf, $\phi$, that meet $\Gamma'$ is wandering, that is, for every point $z\in \phi$ has a trivialization neighborhood $W$ such that each leaf of $\mathcal{F}$ contains no more than one leaf of $\mathcal{F}\vert_{W}$ (see \cite{lct} for more details). Consequently, if $t$ and $t'$ are sufficiently close, one has $\phi_{\Gamma'(t)}\neq \phi_{\Gamma'(t')}$. Moreover, because $\Gamma'$ is positively transverse to $\mathcal{F}$, one cannot find an increasing sequence $(a_n)_{n\in\N}$ and a decreasing sequence $(b_n)_{n\in\N}$, such that $\phi_{\Gamma'(a_n)}=\phi_{\Gamma'(b_n)}$. So, there exist real numbers $a,b$ with $0\leq a<b\leq 1$ such that $t\mapsto \phi_{I^{\Z}_{\mathcal{F}}(z)(t)}$ is injective on $[a,b)$ and satisfies $\phi_{I^{\Z}_{\mathcal{F}}(z)(a)}=\phi_{I^{\Z}_{\mathcal{F}}(z)(b)}$. Replacing $I^{\Z}_{\mathcal{F}}(z)$ by an equivalent transverse path, one can suppose that $I^{\Z}_{\mathcal{F}}(z)(a)=I^{\Z}_{\mathcal{F}}(z)(b)$. Let $\Gamma$ be the loop naturally defined by the closed path $I^{\Z}_{\mathcal{F}}(z)|_{[a,b]}$. The set $U_{\Gamma}=\bigcup_{t\in [a,b] } \phi_{I^{\Z}_{\mathcal{F}}(z)(t)}$ is an open annulus and $\Gamma$ is a simple loop. As $z$ is a periodic point we have the following result. This lemma is contained in the proof of Proposition 2 from \cite{lct}.

\begin{lemma}[\cite{lct}]\label{lemmaproofProposition2}
  Suppose that there exists $t<a$ such that $I^{\Z}_{\mathcal{F}}(z)(t)\notin U_{\Gamma}$. Then there exists $t'\in \R$ with $b<t'$ such that $I^{\Z}_{\mathcal{F}}(z)(t)$ and $I^{\Z}_{\mathcal{F}}(z)(t')$ are in the same connected component of the complement of $U_{\Gamma}$. Moreover $I^{\Z}_{\mathcal{F}}(z)|_{[t,t']}$ has a $\mathcal{F}$-transverse self-intersection.
\end{lemma}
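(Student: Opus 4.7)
The plan exploits that $z$ is fixed by $f$, so $f^k(z)=z$ for every $k\in\Z$ and consequently every piece $I_{\mathcal{F}}^1(f^k(z))$ of the whole transverse trajectory is the same path in $\Ac$. Hence $I^{\Z}_{\mathcal{F}}(z)(s+k)=I^{\Z}_{\mathcal{F}}(z)(s)$ in $\Ac$ for all $s\in\R$ and all integers $k$, while in the universal cover $\widehat\Ac$ the lifted trajectory is equivariant under the deck translation $T\colon(x,y)\mapsto(x+1,y)$, that is $\widehat I^{\Z}_{\mathcal{F}}(\widehat z)(s+k)=T^k\widehat I^{\Z}_{\mathcal{F}}(\widehat z)(s)$. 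This equivariance is the key structural fact.

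For the first assertion, given $t<a$ with $I^{\Z}_{\mathcal{F}}(z)(t)\notin U_\Gamma$, I would simply pick any positive integer $k$ with $t+k>b$ and set $t':=t+k$. By the $\Ac$-periodicity above, $I^{\Z}_{\mathcal{F}}(z)(t')=I^{\Z}_{\mathcal{F}}(z)(t)\notin U_\Gamma$, so the two points coincide in $\Ac$ and, in particular, lie in the same connected component of $\Ac\setminus U_\Gamma$.

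For the $\mathcal{F}$-transverse self-intersection of $\gamma:=I^{\Z}_{\mathcal{F}}(z)|_{[t,t']}$, I would lift to $\widehat\Ac$. The preimage $\widehat{U_\Gamma}=\widehat\pi^{-1}(U_\Gamma)$ is $T$-invariant, connected and essential, so $\widehat\Ac\setminus \widehat{U_\Gamma}$ has exactly two connected components $\widehat V_+,\widehat V_-$, both $T$-invariant; without loss of generality $\widehat I^{\Z}_{\mathcal{F}}(\widehat z)(t)\in\widehat V_-$. Writing $\widehat\Gamma_0:=\widehat I^{\Z}_{\mathcal{F}}(\widehat z)|_{[a,b]}$, a lift of $\Gamma$ from $\widehat z_a$ to $T^m\widehat z_a$ for some $m\ge 1$, the $T$-equivariance then forces the lifted trajectory to enter $\widehat V_-$ on each side of every translate $T^{jm}\widehat\Gamma_0$. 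Hence $\widehat\gamma:=\widehat I^{\Z}_{\mathcal{F}}(\widehat z)|_{[t,t']}$ is a transverse path from $\widehat q\in\widehat V_-$ to $T^k\widehat q\in\widehat V_-$ that contains a chain of translates of $\widehat\Gamma_0$ separated by dips into $\widehat V_-$. Now take a lift $\widetilde\gamma$ of $\gamma$ to $\widetilde{\dom}(\mathcal{F})$ and consider the deck transformation $S$ sitting above $T^{m}$. The translate $S\widetilde\gamma$ passes through the lift of $T^m\widehat\Gamma_0$; the plan is to show that the dip of $\widehat\gamma$ into $\widehat V_-$ between $\widehat\Gamma_0$ and $T^m\widehat\Gamma_0$ crosses a leaf common to $\widetilde\gamma$ and $S\widetilde\gamma$ in such a way that the adjacent half-leaves of $\widetilde\gamma$ and $S\widetilde\gamma$ are flipped in the above/below order relative to that common leaf, which is the very definition of an $\widetilde{\mathcal{F}}$-transverse intersection, and hence projects to an $\mathcal{F}$-transverse self-intersection of $\gamma$.

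The main obstacle is the last geometric verification: certifying that the dip really induces the required inversion of the above/below relation among the four half-leaves $\phi_{\widetilde\gamma(a_1)},\phi_{\widetilde\gamma(b_1)},\phi_{S\widetilde\gamma(a_2)},\phi_{S\widetilde\gamma(b_2)}$ relative to the common leaf $\phi_{\widetilde\gamma(t_1)}=\phi_{S\widetilde\gamma(t_2)}$. This should follow from Jordan-type arguments applied to $\widehat\Gamma_0$ inside $\widehat{U_\Gamma}$ together with the fact (recalled just before the lemma) that every leaf meeting $\Gamma$ is wandering, which ensures that distinct times along the trajectory hit distinct leaves and lets one identify the relevant common leaf at the crossing induced by the dip.
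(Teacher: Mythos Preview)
The paper itself gives no proof here; it simply points to the proof of Proposition~2 in \cite{lct}. So the only question is whether your proposal stands on its own.

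Your treatment of the first assertion is correct and pleasantly direct: since $z$ is fixed, the whole transverse trajectory is $1$-periodic in $\Ac$, so $t':=t+k$ for any integer $k$ with $t+k>b$ lands on the very same point of $\Ac$, hence in the same complementary component of $U_\Gamma$.

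The second assertion, however, is only a plan. You explicitly identify ``the main obstacle'' as the verification that the dip produces the required inversion of the above/below order among the four half-leaves, and you do not carry it out: no common leaf $\phi_{\widetilde\gamma(t_1)}=\phi_{S\widetilde\gamma(t_2)}$ is exhibited, and the sentence ``this should follow from Jordan-type arguments'' is not a proof. There is also a small imprecision in the choice of $S$: the deck transformation of $\widetilde{\dom}(\mathcal F)$ relevant to a self-intersection is the one represented by the loop $\Gamma$ in $\pi_1(\dom(\mathcal F))$, not one ``sitting above $T^m$'' in $\pi_1(\Ac)$; these agree here only because $U_\Gamma$ is essential in $\Ac$, a point that should be made explicit. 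The argument in \cite{lct} works in $\widetilde{\dom}(\mathcal F)$, where each lift of $\Gamma$ is a line; a transverse path that enters and exits $U_\Gamma$ on the same side while containing a full turn of $\Gamma$ is then forced to cross an $S$-translate of itself with the correct leaf configuration. Your sketch points at this picture but stops short of the actual separation argument that pins down the common leaf and checks the two above/below relations. Until that is written out, the proof is incomplete.
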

\begin{proof}
  See the proof of Proposition 2 from \cite{lct}.
\end{proof}

Therefore, as $z$ is a periodic point, there are two possibilities for the whole transverse trajectory of $z$, $I^{\Z}_{\mathcal{F}}(z)$, namely:
\begin{itemize}
\item[I)] $I^{1}_{\mathcal{F}}(z)$ has a $\mathcal{F}$-transverse self-intersection; or
\item[II)] $I^{\Z}_{\mathcal{F}}(z)$  is equivalent to the natural lift of a simple loop $\Gamma$ of $\A$.
\end{itemize}
We will analyze each case separately.

\subsubsection{Case I}

Note that we can collapse the two boundary components of $\Ac$ to two points $S, N$ to obtain a sphere, and that there exist natural extensions of $f$ and $I$ to this sphere, as well as a new transversal foliation that has singularities in $S$ and $N$, which for simplicity we denote by $f^*, I^*$ and $\mathcal{F}^*$.  Since for this induced map $\Gamma'^*= I^{*1}_{\mathcal{F}^*}(z)$ is a loop homologous to zero  with a $\mathcal{F}^*$-transverse self-intersection, one can apply Proposition 7 of \cite{lct} and deduce that $I^{*2}_{\mathcal{F}^*}(z)$, which is admissible of order $2$, has a $\mathcal{F}^*$-transverse self-intersection. It follows that, $I^{2}_{\mathcal{F}}(z)$, has a $\mathcal{F}$-transverse self-intersection and that, if $\widehat{z}$ is a lift of $z$, one can find an integer $p_0$ such that the transverse path $\widehat{\gamma}:=I^{2}_{\widehat{\mathcal{F}}}(\widehat{z})$ has a $\widehat{\mathcal{F}}$-transverse intersection with $\widehat{\gamma}+(p_0,0)$. Lemma \ref{lemma10LCT} provides us a sufficiently small neighborhood $W$ of $\widehat{z}$ such that, for every $\widehat y \in W$, one has that
\begin{itemize}
\item[(a)] $I^{4}_{\widehat{\mathcal{F}}}(\widehat{f}^{-3}(\widehat y))$ contains a subpath equivalent to $I^{2}_{\widehat{\mathcal{F}}}(\widehat f^{-2}(\widehat z))= \widehat{\gamma}-(2,0)$, and
\item[(b)] $I^{4}_{\widehat{\mathcal{F}}}(\widehat f^{-1}(\widehat{y}))$ contains a subpath equivalent to $\widehat{\gamma}$.
\end{itemize}
Let $k>0$ be an integer such that $k-p_0>-2\alpha$. Fix $W_k\subset W$ be a neighborhood of $\widehat z$ such that $\widehat f^{k}(W_k)\subset W+(k,0)$, and let $\widehat{z_*}$ be a point in $W_k\cap \widehat{\pi}^{-1}(X_{\alpha})$, where $X_{\alpha}$ is the set provides by Proposition \ref{consequences}. By Proposition \ref{consequences} and Lemma \ref{consequences2}, one finds integers $p,\, q$, with $q>k+8$ sufficiently large, such that
\begin{itemize}
\item $\widehat f^{q}(\widehat{z_*})$ belongs to $W+(p,0)$;
\item $\frac{p+p_0-k}{q+1}<\alpha$.
\end{itemize}

Now, if $\widehat{y_*}=\widehat{f}^{k-1}(\widehat{z_*})$, then $\widehat{f}(\widehat{y_*})\in W+(k,0)$, which implies that $I^{4}_{\widehat{\mathcal{F}}}(\widehat{y_*})$ has $\gamma_1=\widehat{\gamma}+(k,0)$ as a subpath. Furthermore, as $\widehat f^{q}(\widehat{z_*})$ belongs to $W+(p,0)$, the transverse path  $I^{4}_{\widehat{\mathcal{F}}}(\widehat f^{q-3}(\widehat{z_*}))$ has $\gamma_2=\widehat{\gamma}+(p-2,0)$ as a subpath. Note that the path $\gamma^*=I^{q+1}_{\widehat{\mathcal{F}}}(\widehat{z_*})$ is admissible of order $q+1$, and $\gamma_2$ has a $\widehat{\mathcal{F}}$-transverse intersection with $\gamma_2+(p_0,0)=\gamma_1+(p+p_0-k-2,0)$. Therefore the path $\gamma^*\vert_{[q-3,q+1]}=I^{4}_{\widehat{\mathcal{F}}}(\widehat f^{q-3}(\widehat{z_*}))$ has a $\widehat{\mathcal{F}}$-transverse intersection with $\left(\gamma^* +(p+p_0-k-2,0)\right)\vert_{[k-1,k+3]}= I^{4}_{\widehat{\mathcal{F}}}(\widehat{y_*})+(p+p_0-k-2,0)$. By Proposition \ref{pr:newperiodicforcing2}, $\frac{p+p_0-k-2}{q+1}$ is in the rotation set of $\widehat{f}$, which is a contradiction.

\subsubsection{Case II}

Assume now that $I^{\Z}_{\mathcal{F}}(z)$ is equivalent to the natural lift of a simple loop $\Gamma$ in $\A$. Let consider $\fonc{\gamma}{\R}{\A}$ the natural lift of $\Gamma$ such that $\gamma(t+1)=\gamma(t)$ for every $t\in \R$ and $\widehat{\gamma}$ the lift of $\gamma$ to $\widehat{\Ac}$. One gets that the set of leafs intersecting $\Gamma$ is an open topological sub-annulus $U_{\Gamma}$ of $\A$, and since $\widehat{f}(\widehat{z})=\widehat{z}+(1,0)$, one gets that  $\widehat{U_{\Gamma}}=\widehat{\pi}^{-1}(U_{\Gamma})$ has a single connected component, and that $U_{\Gamma}$ is an essential annulus.

\begin{prop}\label{propo existence good transverse path}
There exist an admissible transverse path $\gamma^*$ and real numbers $a<a'<b'<b$ such that, if $\widehat{\gamma^*}$ is a lift of $\gamma^*$ to $\widehat{\Ac}$, then
 \begin{itemize}
    \item $\widehat{\gamma^*}\vert_{[a',b']}$ is equivalent to $\widehat{\gamma}\vert_{[s,s+1]}$ for some $s\in \R$ and $\widehat{\gamma^*}(a')=\widehat{\gamma^*}(b')$;
    %\item the set $U_{\Gamma}:=\cup_{t\in [a',b']} \phi_{\gamma(t)}$ is an essential open annulus;
    \item $\widehat{\gamma^*}|_{(a,a')}$ is included in $\widehat{U_{\Gamma}}$ but it do not meet $\phi_{\widehat{\gamma}(s)}-(1,0)$ and $\widehat{\gamma^*}|_{(b',b)}$ is included in $\widehat{U_{\Gamma}}$ but it do not meet $\phi_{\widehat{\gamma}(s+1)}+(1,0)$; and
    \item $\widehat{\gamma^*}(a)$ and $\widehat{\gamma^*}(b)$ belong to the same connected component of the complement to $\widehat{U_{\Gamma}}$.
  \end{itemize}
\end{prop}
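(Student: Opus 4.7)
The plan is to build $\widehat{\gamma^*}$ as essentially a subpath of the transverse trajectory of a well-chosen bi-recurrent point $z_*\in X_\alpha$ lying close to the central loop $\Gamma$, exploiting the conflict between the rotation number $1$ of the fixed point $z$ and the rotation number $\alpha<0$ of $z_*$. The presence of $z$ forces a short orbit of $z_*$ to shadow one turn around $\Gamma$, which via Lemma \ref{lemma10LCT} supplies the middle segment $\widehat{\gamma^*}|_{[a',b']}$; the genuine negative rotation number of $z_*$ then forces longer pieces of its orbit to drift out of $\widehat{U_{\Gamma}}$ in both time directions, supplying the two tails.

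First I would set up the geometric picture. Using $\widehat{f}(\widehat{z})=\widehat{z}+(1,0)$ and the fact that the leaves intersecting $\Gamma$ are wandering, one verifies that $\widehat{U_{\Gamma}}=\widehat{\pi}^{-1}(U_{\Gamma})$ is connected, invariant by the integer horizontal translation, and that its complement in $\widehat{\Ac}$ has exactly two components $C_+$ (adjacent to $\R\times\{1\}$) and $C_-$ (adjacent to $\R\times\{0\}$). Using the full support of $\mu$ and the density of $X_\alpha$, pick $z_*\in X_\alpha$ whose lift $\widehat{z_*}$ lies arbitrarily close to $\widehat{\gamma}(s)$ for an appropriate $s$. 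Lemma \ref{lemma10LCT} then gives, for sufficiently close $z_*$, that $I^{3}_{\widehat{\mathcal F}}(\widehat{f}^{-1}(\widehat{z_*}))$ contains a subpath equivalent to $I^{1}_{\widehat{\mathcal F}}(\widehat{z})\simeq \widehat{\gamma}|_{[s,s+1]}$, and this subpath is taken as the middle piece $\widehat{\gamma^*}|_{[a',b']}$. Extending further along $z_*$'s trajectory to a larger admissible path $I^{N}_{\widehat{\mathcal F}}(\widehat{f}^{-M}(\widehat{z_*}))$, Proposition \ref{consequences} guarantees that the backward iterates of $\widehat{z_*}$ drift arbitrarily far to the right and the forward iterates drift arbitrarily far to the left, so on both sides the extended path must eventually cross into $\widehat{\Ac}\setminus \widehat{U_{\Gamma}}$; I declare $a$ (resp. $b$) to be the first parameter value at which this happens. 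The conditions that the tail $\widehat{\gamma^*}|_{(a,a')}$ avoids $\phi_{\widehat{\gamma}(s)}-(1,0)$ and that $\widehat{\gamma^*}|_{(b',b)}$ avoids $\phi_{\widehat{\gamma}(s+1)}+(1,0)$ are then automatic, since reaching those translated leaves while remaining inside $\widehat{U_{\Gamma}}$ would require an additional full loop around $\Gamma$, which cannot occur before the path first exits $\widehat{U_{\Gamma}}$.

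The main obstacle is the last clause: arranging $\widehat{\gamma^*}(a)$ and $\widehat{\gamma^*}(b)$ to lie in the \emph{same} component of $\widehat{\Ac}\setminus \widehat{U_{\Gamma}}$. There are only four possible combinations of exit sides, so a pigeonhole argument combined with the freedom to vary $s$ (along the dense set $X_\alpha$) and the time parameters $M, N$ should produce a configuration with matching exits. If direct pigeonhole is insufficient, one can instead build $\widehat{\gamma^*}$ by splicing together two admissible transverse paths, one coming from a recurrent point $z_*^{-}$ whose backward trajectory exits in the prescribed component, and another from a point $z_*^{+}$ whose forward trajectory exits in the same component; the splicing is carried out at the common middle piece $\widehat{\gamma}|_{[s,s+1]}$ via the fundamental proposition (Proposition \ref{proposition20LCT}). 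Finally, Corollary \ref{corollary24LCT} is applied, if necessary, to replace $\widehat{\gamma^*}$ by an equivalent admissible path with no $\mathcal{F}$-transverse self-intersection on the tails, while keeping the middle piece and the components of the endpoints intact.
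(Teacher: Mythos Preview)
Your overall strategy---pick $z_*\in X_\alpha$ near $\Gamma$, use Lemma~\ref{lemma10LCT} to capture a copy of $\widehat{\gamma}|_{[s,s+1]}$ inside its transverse trajectory, and then extend until the trajectory leaves $\widehat{U_\Gamma}$---is the same starting point as the paper's. But two steps in your write-up do not go through as stated.

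First, the justification of the tail conditions is incorrect. You declare $a$ to be the first backward exit from $\widehat{U_\Gamma}$ and then assert that $\widehat{\gamma^*}|_{(a,a')}$ cannot meet $\phi_{\widehat{\gamma}(s)}-(1,0)$ because that ``would require an additional full loop around $\Gamma$, which cannot occur before the path first exits.'' There is nothing preventing the trajectory of $z_*$ from winding several times around $\Gamma$ inside $\widehat{U_\Gamma}$ before exiting; in fact the paper explicitly allows this, recording that $\widehat{\gamma_0}|_{[t'_1,t'_2]}$ is equivalent to $\widehat{\gamma}|_{[j^-,j^+]}$ with $j^+-j^-\geq 2$, and then uses a $\widehat{\mathcal F}$-transverse self-intersection (between the long piece and a translate of itself) together with Proposition~\ref{proposition20LCT} to cut the middle down to a single turn while keeping the tails short. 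Your argument skips this reduction.

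Second, and more seriously, your handling of the ``same component'' condition has a real gap. The pigeonhole over $s$, $M$, $N$ is not an argument: varying these parameters does not obviously change which component the trajectory of a \emph{fixed} recurrent point $z_*$ exits into. Your alternative---splicing two trajectories ``at the common middle piece $\widehat{\gamma}|_{[s,s+1]}$ via Proposition~\ref{proposition20LCT}''---does not work as written, because sharing a subpath is the opposite of intersecting $\mathcal F$-transversally; Proposition~\ref{proposition20LCT} requires a genuine transverse crossing. The paper resolves this with a different mechanism: when the first two exits of $\widehat{\gamma_0}$ land in \emph{opposite} components, bi-recurrence of $z_0$ forces the trajectory to later re-cross $\widehat{U_\Gamma}$ and exit on the original side, and then Lemma~\ref{lemmaproofProposition2} (the structural lemma about paths crossing the annulus $U_\Gamma$) supplies a $\widehat{\mathcal F}$-transverse intersection between the two crossings $\widehat{\gamma_0}|_{[t_1,t_2]}$ and a translate of $\widehat{\gamma_0}|_{[t_3,t_4]}$. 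That transverse intersection is what makes Proposition~\ref{proposition20LCT} applicable and produces the desired path with both endpoints in the same component. This use of Lemma~\ref{lemmaproofProposition2} is the missing ingredient in your proposal.
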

(See Figure \ref{fig:goodtransversepath}).

\begin{center}
\begin{figure}[h!]
  \centering
    \includegraphics{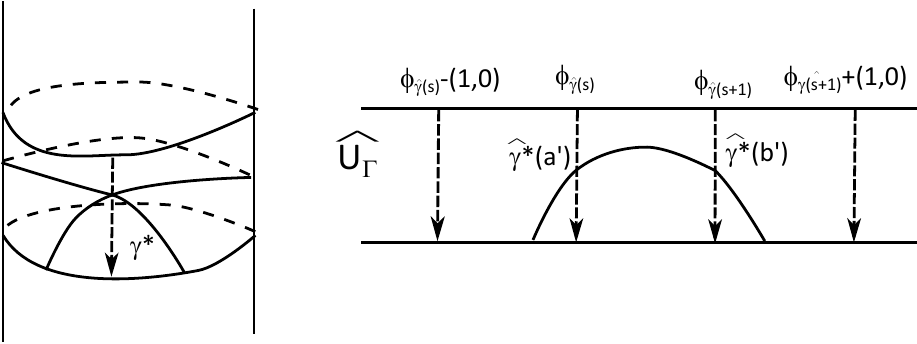}
  \caption{Left side: The transverse path $\gamma^*$. Right side: A lift of $\gamma^*$.}
  \label{fig:goodtransversepath}
\end{figure}
\end{center}

\begin{proof}[Proof of Proposition \ref{propo existence good transverse path}]
By density of the set $X_\alpha$, provides by Proposition \ref{consequences}, and Lemma \ref{lemma10LCT}, we can suppose that $\widehat{\gamma}\vert_{[0,2]}$ is equivalent to a subpath of $I_{\widehat{\mathcal{F}}}^\Z(\widehat{z_0})$, the whole transverse trajectory of a point $\widehat{z_0}$ that lifts a point $z_0$ in $X_\alpha$. We will denote $\widehat{\gamma_0}=I_{\widehat{\mathcal{F}}}^\Z(\widehat{z_0})$. Since the point $z_0$ has a negative rotation number, the transverse path $\widehat{\gamma_0}$ cannot be contained in $\widehat{U_\Gamma}$. Hence one can find real numbers
  $$t_1<t'_1< t'_2<t_2$$
  and integer $j^-$ and $j^+$ uniquely determined such that
\begin{itemize}
  %\item $\hat{\gamma}_0|_{[t_1,t_2]}$ is an admissible path of order $\leq N$, with $N\geq M$;
  \item $\widehat{\gamma_0}|_{[t'_1,t'_2]}$ is equivalent to $\widehat{\gamma}\vert_{[j^-,j^+]}$;
    \item $\widehat{\gamma_0}|_{[t_1,t'_1]}$ and $\widehat{\gamma_0}|_{[t'_2,t_2]}$ are included in $\widehat{U_\Gamma}$ but do not meet $\phi_{\widehat \gamma(0)}+(j^--1,0)$ and $\phi_{\widehat \gamma(0)}+(j^++1,0)$ respectively;
   \item $\widehat{\gamma_0}(t_1)$ and $\widehat{\gamma_0}(t_2)$ do not belong to $\widehat{U_\Gamma}$.
\end{itemize}
We claim that, if $\widehat{\gamma_0}(t_1)$ and $\widehat{\gamma_0}(t_2)$ belong to the same connected component of the complement of $\widehat{U_\Gamma}$ then, we are done. Indeed, we have that $j^+-j^-\geq 2$, we know that $ \widehat{\gamma_0}|_{[t_1,t'_2]}+ (j^+-1-j^-,0)$ and $ \widehat{\gamma_0}|_{[t'_1,t_2]}$ intersect $\widehat{\mathcal{F}}$-transversally at $\widehat{\gamma_0}(t)+ (j^+-1-j^-,0)=\widehat{\gamma_0}(t'_2)$. By Proposition \ref{proposition20LCT} the transverse path
$$  \widehat{\gamma}:= (\widehat{\gamma_0} + (j^+-1-j^-,0) )|_{[t_1,t]}\widehat{\gamma_0} |_{[t'_2,t_2]} $$
is a subpath of an admissible transverse path $\widehat{\gamma^*}$ which satisfies the proposition. In that follows we assume that
\begin{itemize}
  \item $\widehat{\gamma_0}(t_1)$ and $\widehat{\gamma_0}(t_2)$ belong to different connected components of the complement of $\widehat{U_\Gamma}$.
\end{itemize}
Since the point $z_0$ is bi-recurrent and the complement of the annulus $\widehat{U_\Gamma}$ saturated, that is, it is the union of singular point and leaves of $\widehat{\mathcal{F}}$, one can find real numbers $$t_2\leq t_3<t_4$$ such that
\begin{itemize}
  \item $\widehat{\gamma_0}(t_4)$ belongs to the same connected component of the complement of $\widehat{U_\Gamma}$ than $\widehat{\gamma_0}(t_1)$;
    \item $\widehat{\gamma_0}|_{[t_2,t_4)}$ does not meet this connected component of the complement of $\widehat{U_\Gamma}$;
   \item $\widehat{\gamma_0}|_{(t_3,t_4)}$ is included in $\widehat{U_{\Gamma}}$; and
   \item $\widehat{\gamma_0}(t_3)$ belongs to the same connected component of the complement of $\widehat{U_\Gamma}$ than $\widehat{\gamma_0}(t_2)$.
\end{itemize}
Observe now that, by Lemma \ref{lemmaproofProposition2}, there exists a non-zero integer $j$, such that $\widehat{\gamma}|_{[t_1,t_2]}$ and $\widehat{\gamma}|_{[t_3,t_4]}+(j,0)$ intersect $\widehat{\mathcal{F}}$-transversally at $\widehat{\gamma}(s)=\widehat{\gamma}(t)$ with $t_1<s<t_2<t_3<t<t_4$. Hence by Proposition \ref{proposition20LCT}, one has that the path
$$ \widehat{\gamma''_0}:= \widehat{\gamma}|_{[t_1,s]} (\widehat{\gamma}|_{[t,t_4]}+(j,0)),$$
is a subpath of an admissible transverse path. We can construct an admissible transverse path $\widehat{\gamma^*}$ as above.\\
\end{proof}

\subsection{End of the proof of Theorem A}

Let consider a lift $\fonc{\widehat{\gamma^*}}{[a,b]}{\widehat{\Ac}}$ of $\gamma^*$, provides by Proposition \ref{propo existence good transverse path}, to $\widehat{\Ac}$ and let $\widehat{\gamma}$ be a lift of the natural lift of $\Gamma$ such that $\widehat{\gamma}(0)=\widehat{\gamma}^*(a')$ and $\widehat{\gamma}(1)=\widehat{\gamma}^*(b')$. We suppose that $\widehat{\gamma^*}$ is admissible of order $N\geq 1$. We will denote by $\widehat{\phi_0}$ the leaf $\phi_{\widehat{\gamma}(0)}=\phi_{\widehat{\gamma^*}(a')}$ of $\widehat{\mathcal{F}}$.
Let $\widehat{U_{\Gamma}}$ be a lift of $U_{\Gamma}$ which is homeomorphic to $\R^2$. We have the following corollary to Proposition \ref{propo existence good transverse path} .

\begin{coro}
  For every integer $k\geq 1$, there exist an admissible of order $kN$ transverse path $\fonc{\widehat{\gamma_k}}{[a,b]}{\widehat{\Ac}}$, real numbers $a<a'_k<b'_k<b$ such that
  \begin{itemize}
    \item $\widehat{\gamma_k}|_{[a'_k,b'_k]}$ is equivalent to $\widehat{\gamma}|_{[0,k]}$;
    \item $\widehat{\gamma_k}|_{(a,a'_k)}$ is included in $\widehat{U_{\Gamma}}$ but it do not meet $\widehat{\phi}_0-(1,0)$ and $\widehat{\gamma}_k|_{(b'_k,b)}$ is included in $\widehat{U_{\Gamma}}$ but it do not meet $\widehat{\phi_0}+(k+1,0)$;
    \item $\widehat{\gamma_k}(a)$ and $\widehat{\gamma_k}(b)$ belong to the same connected component of the complement to $\widehat{U_{\Gamma}}$.
  \end{itemize}
\end{coro}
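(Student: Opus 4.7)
The plan is to proceed by induction on $k$, using the fundamental Proposition \ref{proposition20LCT} to concatenate admissible paths at each stage. The base case $k=1$ is immediate: take $\widehat{\gamma_1}:=\widehat{\gamma^*}$, $a'_1:=a'$ and $b'_1:=b'$, whereupon the three bulleted properties reduce to the conclusions of Proposition \ref{propo existence good transverse path}.

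For the inductive step, suppose $\widehat{\gamma_k}$ has been constructed with the required properties. The translated path $\widehat{\gamma^*}+(k,0)$ is admissible of order $N$ (horizontal integer translations are deck transformations of $\widehat\pi$ and commute with $\widehat f$), its middle portion is equivalent to $\widehat{\gamma}|_{[k,k+1]}$, and its entry and exit segments stay in $\widehat{U_\Gamma}$ while avoiding the leaves $\widehat{\phi_{k-1}}$ and $\widehat{\phi_{k+2}}$ respectively. Both $\widehat{\gamma_k}$ and $\widehat{\gamma^*}+(k,0)$ pass through the common point $\widehat{\gamma}(k)$ on the leaf $\widehat{\phi_k}$: indeed $\widehat{\gamma_k}(b'_k)=\widehat{\gamma}(k)=(\widehat{\gamma^*}+(k,0))(a')$. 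The key claim is that the two paths have a $\widehat{\mathcal{F}}$-transverse intersection at this common point; granting this, Proposition \ref{proposition20LCT} yields that
$$\widehat{\gamma_{k+1}}\;:=\;\widehat{\gamma_k}|_{[a,\,b'_k]}\,\bigl(\widehat{\gamma^*}+(k,0)\bigr)|_{[a',\,b+k]}$$
is admissible of order $(k+1)N$. Setting $a'_{k+1}:=a'_k$ and $b'_{k+1}:=b'+k$, the middle of $\widehat{\gamma_{k+1}}$ then traverses $\widehat{\gamma}|_{[0,k+1]}$, as required.

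The verification of the $\widehat{\mathcal{F}}$-transverse intersection at $\widehat{\gamma}(k)$ is the main obstacle. I would work inside $\widehat{U_\Gamma}\cong\R^2$, where $\widehat{\mathcal{F}}$ restricts to a non-singular foliation and the leaves $\{\widehat{\phi_j}\}_{j\in\Z}$ form an ordered family of translates of $\widehat{\phi_0}$. The positional data to exploit is as follows: just before $b'_k$, the path $\widehat{\gamma_k}$ lies on leaves $\phi_{\widehat{\gamma}(t)}$ with $t$ slightly less than $k$ (hence strictly between $\widehat{\phi_{k-1}}$ and $\widehat{\phi_k}$ in the leaf order), and just after $b'_k$ it enters its exit segment, which is in $\widehat{U_\Gamma}$ and does not cross $\widehat{\phi_{k+1}}$; symmetrically, just before $a'$, $\widehat{\gamma^*}+(k,0)$ lies in its entry segment, which is in $\widehat{U_\Gamma}$ and does not cross $\widehat{\phi_{k-1}}$, and just after $a'$ it lies on leaves $\phi_{\widehat{\gamma}(t)}$ with $t$ slightly greater than $k$. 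Choosing lifts to the universal cover of $\dom(\widehat{\mathcal{F}})$ so that both paths go through the same lift of $\widehat{\gamma}(k)$, the avoidance clauses force the relative positions of the four leaves $\phi_{\widetilde{\gamma}_i(a_i)},\phi_{\widetilde{\gamma}_i(b_i)}$, $i=1,2$, to satisfy the above/below relations in Le Calvez's definition.

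Finally, for the three properties of $\widehat{\gamma_{k+1}}$: the entry segment $\widehat{\gamma_{k+1}}|_{(a,\,a'_{k+1})}$ coincides with $\widehat{\gamma_k}|_{(a,\,a'_k)}$, inheriting the first part of the second bullet; the exit segment $\widehat{\gamma_{k+1}}|_{(b'_{k+1},\,b+k)}$ coincides with $(\widehat{\gamma^*}+(k,0))|_{(b',\,b)}$, which is in $\widehat{U_\Gamma}$ and avoids $\widehat{\phi_{k+2}}$, giving the second part. For the third bullet, $\widehat{\gamma_{k+1}}(a)=\widehat{\gamma^*}(a)$ and $\widehat{\gamma_{k+1}}(b+k)=\widehat{\gamma^*}(b)+(k,0)$; since $U_\Gamma$ is an essential open sub-annulus of $\A$, the set $\widehat{\Ac}\setminus\widehat{U_\Gamma}$ has exactly two connected components (one on each side of $\widehat{U_\Gamma}$), each invariant under the horizontal integer translations, so both endpoints remain in the same component that already contained $\widehat{\gamma^*}(a)$ and $\widehat{\gamma^*}(b)$.
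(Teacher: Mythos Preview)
Your argument is correct and is essentially the paper's own proof unfolded into an induction: the paper observes once that $\widehat{\gamma^*}|_{[a',b]}$ and $(\widehat{\gamma^*}+(1,0))|_{[a,b']}$ intersect $\widehat{\mathcal{F}}$-transversally at $\widehat{\gamma^*}(b')=\widehat{\gamma^*}(a')+(1,0)$ (for exactly the positional reasons you describe), translates this to get the same transversality between consecutive shifts $(\widehat{\gamma^*}+(i,0))$ and $(\widehat{\gamma^*}+(i+1,0))$, and then applies the chain lemma (Lemma~\ref{corollary22LCT}) in one stroke to obtain
\[
\widehat{\gamma_k}=\widehat{\gamma^*}|_{[a,b']}\Bigl(\textstyle\prod_{i=1}^{k-2}(\widehat{\gamma^*}+(i,0))|_{[a',b']}\Bigr)(\widehat{\gamma^*}+(k-1,0))|_{[a',b]}.
\]
Since Lemma~\ref{corollary22LCT} is itself just iterated Proposition~\ref{proposition20LCT}, your inductive concatenation produces the same path and the same admissibility order $kN$; the only cosmetic difference is that the paper verifies the transversality once (between two translates of $\widehat{\gamma^*}$) rather than at each inductive step. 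Your parameter bookkeeping has a small slip (the domain of $(\widehat{\gamma^*}+(k,0))$ is still $[a,b]$, not $[a',b+k]$), but this is just notation for the concatenated domain and does not affect the argument.
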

\begin{proof}
  Note that the paths $\widehat{\gamma^*}|_{[a',b]}$ and $(\widehat{\gamma^*}+(1,0))|_{[a,b']}$ intersect $\widehat{\mathcal{F}}$-transversally at $\widehat{\gamma^*}(b')=\widehat{\gamma^*}(a')+(1,0)$. It follows that for every integer $i\in \{1,\cdots, k-1\}$ the paths $(\widehat{\gamma^*}+(i,0))|_{[a',b]}$ and $(\widehat{\gamma^*}+(i+1,0))|_{[a,b']}$ intersect $\widehat{\mathcal{F}}$-transversally at $\widehat{\gamma^*}(b')+(i,0)=\widehat{\gamma^*}(a')+(i+1,0)$. Applying Lemma \ref{corollary22LCT}, one knows that the path
  $$ \widehat{\gamma_k}:= \widehat{\gamma^*}|_{[a,b']}\left(\prod_{i=1}^{k-2}(\widehat{\gamma^*}+(i,0))|_{[a',b']}\right) (\widehat{\gamma^*}|_{[a',b]}+(k-1,0))$$ is admisible of order $kN$. This completes the proof of the corollary.
\end{proof}

For every integer $k$ large enough consider real numbers $a''_k$ and $b''_k$ with $a'_k<a''_k<b''_k<b'_k$ such that $\phi_{\widehat{\gamma_k}(a''_k)}=\widehat{\phi_0}+(1,0)$ and  $\phi_{\widehat{\gamma_k}(b''_k)}=\widehat{\phi_0}+(k-1,0)$. Let $\widehat{z_k}$ be a point in $\dom(\widehat{I})$ such that $\widehat{\gamma_k}$ is a subpath of the path $I_{\widehat{\mathcal{F}}}^{kN}(\widehat{z_k})$, and consider the smaller integer $i_k$ and the larger integer $N_k$ in $\{1,\cdots, kN\}$ such that
  \begin{itemize}
    \item $\widehat{\gamma_k}|_{[a,a''_k]}$ is a subpath of $I_{\widehat{\mathcal{F}}}^{i_k}(\widehat{z_k})$;
    \item $\widehat{\gamma_k}|_{[b''_k,b]}$ is a subpath of $I_{\widehat{\mathcal{F}}}^{kN-i_k-N_k}(\widehat{f}^{i_k+N_k}(\widehat{z_k}))$.
  \end{itemize}
Let put $\widehat{y_k}= \widehat{f}^{i_k}(\widehat{z_k})$ and $\widehat{f}^{N_k}(\widehat{y_k})= \widehat{f}^{i_k+N_k}(\widehat{z_k})$. We have the following result.

\begin{coro}
  For every integer $k$ large enough the paths $I_{\widehat{\mathcal{F}}}^{i_k}(\widehat{f}^{-i_k}(\widehat{y_k}))$ and $I_{\widehat{\mathcal{F}}}^{kN-N_k}(\widehat{f}^{N_k}(\widehat{y_k}))-(k,0)$ intersect $\widehat{\mathcal{F}}$-transversally.
\end{coro}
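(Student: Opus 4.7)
The plan is to deduce this corollary directly from the $\widehat{\mathcal{F}}$-transverse intersection that was already used to assemble the factors of $\widehat{\gamma_k}$ in the proof of the previous corollary, exploiting only translation invariance. I would first unwind the concatenation
\[
\widehat{\gamma_k}=\widehat{\gamma^*}|_{[a,b']}\cdot\prod_{i=1}^{k-2}\bigl(\widehat{\gamma^*}+(i,0)\bigr)|_{[a',b']}\cdot\bigl(\widehat{\gamma^*}+(k-1,0)\bigr)|_{[a',b]}
\]
and identify precisely which subpaths of $\widehat{\gamma_k}$ appear in the statement. Since the leaf $\widehat{\phi_0}+(1,0)$ is met by $\widehat{\gamma_k}$ only at the boundary between the first and second factors (the parameter $b'$ in the first factor's parametrization), the defining condition $\phi_{\widehat{\gamma_k}(a''_k)}=\widehat{\phi_0}+(1,0)$ forces $\widehat{\gamma_k}|_{[a,a''_k]}$ to coincide, up to equivalence of transverse paths, with $\widehat{\gamma^*}|_{[a,b']}$. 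The symmetric argument at the other end identifies $\widehat{\gamma_k}|_{[b''_k,b]}$ with $(\widehat{\gamma^*}+(k-1,0))|_{[a',b]}$, so that translating by $-(k,0)$ gives $\widehat{\gamma_k}|_{[b''_k,b]}-(k,0)\equiv\widehat{\gamma^*}|_{[a',b]}-(1,0)$.

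Next, I would recall that the construction of $\widehat{\gamma_k}$ rested on the $\widehat{\mathcal{F}}$-transverse intersection of $\widehat{\gamma^*}|_{[a',b]}$ and $\widehat{\gamma^*}|_{[a,b']}+(1,0)$ at the point $\widehat{\gamma^*}(b')=\widehat{\gamma^*}(a')+(1,0)$, and that these parameters sit strictly inside the respective intervals because $a<a'<b'<b$. Since the lifted foliation $\widehat{\mathcal{F}}$ is preserved by the deck transformation $(x,y)\mapsto(x-1,y)$, translating both paths by $-(1,0)$ yields the same conclusion for $\widehat{\gamma^*}|_{[a,b']}$ and $\widehat{\gamma^*}|_{[a',b]}-(1,0)$, which then intersect $\widehat{\mathcal{F}}$-transversally at $\widehat{\gamma^*}(a')$, again at interior parameters. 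Inserting the identifications of the first step, this is exactly the $\widehat{\mathcal{F}}$-transverse intersection of $\widehat{\gamma_k}|_{[a,a''_k]}$ and $\widehat{\gamma_k}|_{[b''_k,b]}-(k,0)$. To close, observe that by the very choice of $\widehat{z_k}$, $i_k$ and $N_k$ these two subpaths are equivalent to subpaths of $I_{\widehat{\mathcal{F}}}^{i_k}(\widehat{f}^{-i_k}(\widehat{y_k}))$ and of $I_{\widehat{\mathcal{F}}}^{kN-i_k-N_k}(\widehat{f}^{N_k}(\widehat{y_k}))-(k,0)$; since the transverse intersection lies at an interior parameter of each subpath, it remains interior in the larger trajectories, completing the proof.

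The main obstacle I foresee is the bookkeeping in the first step: one must rule out the possibility that $\widehat{\gamma_k}$ meets the leaves $\widehat{\phi_0}+(1,0)$ or $\widehat{\phi_0}+(k-1,0)$ at any parameter other than the expected junction between two consecutive factors of the concatenation. This is where one needs to combine the avoidance properties of $\widehat{\gamma^*}|_{(a,a')}$ and $\widehat{\gamma^*}|_{(b',b)}$ given by Proposition \ref{propo existence good transverse path} with the fact that each factor is positively transverse and that in $\widetilde{\dom}(\widehat{\mathcal{F}})$ a positively transverse path meets every leaf at most once. Once this identification is secured, everything else is a direct application of translation invariance of $\widehat{\mathcal{F}}$ and of the construction of $\widehat{\gamma_k}$.
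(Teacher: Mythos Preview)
Your proposal is correct and follows precisely the line of argument that the paper leaves implicit: the corollary is stated without proof in the paper, and your reconstruction---identifying $\widehat{\gamma_k}|_{[a,a''_k]}$ with $\widehat{\gamma^*}|_{[a,b']}$ and $\widehat{\gamma_k}|_{[b''_k,b]}-(k,0)$ with $\widehat{\gamma^*}|_{[a',b]}-(1,0)$, then invoking the same $\widehat{\mathcal{F}}$-transverse intersection at $\widehat{\gamma^*}(b')=\widehat{\gamma^*}(a')+(1,0)$ used to build $\widehat{\gamma_k}$---is exactly what the authors intend. Your remark about the bookkeeping obstacle (uniqueness of the crossing of $\widehat{\phi_0}+(1,0)$) is apt and is handled just as you say, using monotonicity of transverse paths in the trivially foliated strip $\widehat{U_\Gamma}$ together with the avoidance properties from Proposition~\ref{propo existence good transverse path}.
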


%Fix us an integer $k$ such that $$ \frac{k-2}{kN} < -\alpha. $$
By density of the set $X_\alpha$ given by Proposition \ref{consequences} and Lemma \ref{lemma10LCT}, considering a point close to $\widehat{y_k}$, we can suppose that $\widehat{y_k}$ belongs to $X_\alpha$. We have the consequence following, which results of Proposition \ref{consequences}.

\begin{lemma}
  For every positive real number $L$, there exist two integers $p\in -\N$ and $q\in\N$ satisfying $q-i_k>L$ such that:
  \begin{itemize}
    \item $\widehat{\gamma_k}|_{[a,a''_k]}$ is a subpath of $I_{\widehat{\mathcal{F}}}^{i_k+2}(\widehat{f}^{q-i_k-1}(\widehat{y_k}))-(p,0)$.
  \end{itemize}
\end{lemma}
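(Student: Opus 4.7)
The plan is to combine the recurrence/rotation data for $\widehat{y_k}\in X_\alpha$ provided by Proposition \ref{consequences} with the stability of transverse trajectories furnished by Lemma \ref{lemma10LCT}. Recall that by construction $\widehat{\gamma_k}|_{[a,a''_k]}$ is a subpath of $I^{i_k}_{\widehat{\mathcal{F}}}(\widehat{f}^{-i_k}(\widehat{y_k}))$. Also, since $\widehat{y_k}\in X_\alpha$ (and $\alpha<0$), Proposition \ref{consequences} yields a sequence $(p_l,q_l)_{l\in\N}$ in $(-\N)\times\N$ with $q_l\to+\infty$, $p_l-\alpha q_l\to 0$ and $\widehat{f}^{q_l}(\widehat{y_k})-\widehat{y_k}-(p_l,0)\to 0$.

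First I would push this convergence back by $i_k$ iterates: since $\widehat{f}$ commutes with the integer horizontal translation $(x,y)\mapsto (x-p_l,y)$, applying the continuous map $\widehat{f}^{-i_k}$ gives
$$ \widehat{f}^{q_l-i_k}(\widehat{y_k})-(p_l,0)\;\longrightarrow\;\widehat{f}^{-i_k}(\widehat{y_k}) \quad\text{as }l\to+\infty. $$
Now I would invoke the second assertion of Lemma \ref{lemma10LCT} at the point $\widehat{f}^{-i_k}(\widehat{y_k})$ to obtain a neighborhood $W$ of $\widehat{f}^{-i_k}(\widehat{y_k})$ in $\widehat{\Ac}$ with the property that, for every $z',z''\in W$, the path $I^{i_k}_{\widehat{\mathcal{F}}}(z')$ is equivalent to a subpath of $I^{i_k+2}_{\widehat{\mathcal{F}}}(\widehat{f}^{-1}(z''))$.

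Next I would choose $l$ large enough so that simultaneously $q_l-i_k>L$ and the point $z''_l:=\widehat{f}^{q_l-i_k}(\widehat{y_k})-(p_l,0)$ lies in $W$; this is possible thanks to the two limits above. Taking $z'=\widehat{f}^{-i_k}(\widehat{y_k})\in W$ and $z''=z''_l$, Lemma \ref{lemma10LCT} implies that $I^{i_k}_{\widehat{\mathcal{F}}}(\widehat{f}^{-i_k}(\widehat{y_k}))$ is equivalent to a subpath of $I^{i_k+2}_{\widehat{\mathcal{F}}}(\widehat{f}^{-1}(z''_l))$. Using equivariance of $\widehat{\mathcal{F}}$ under integer horizontal translations, we rewrite
$$ \widehat{f}^{-1}(z''_l)=\widehat{f}^{q_l-i_k-1}(\widehat{y_k})-(p_l,0), \qquad I^{i_k+2}_{\widehat{\mathcal{F}}}(\widehat{f}^{-1}(z''_l))=I^{i_k+2}_{\widehat{\mathcal{F}}}\!\left(\widehat{f}^{q_l-i_k-1}(\widehat{y_k})\right)-(p_l,0). $$

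Setting $q:=q_l$ and $p:=p_l$, both conditions $p\in-\N$ and $q\in\N$ with $q-i_k>L$ are satisfied, and since $\widehat{\gamma_k}|_{[a,a''_k]}$ is a subpath of $I^{i_k}_{\widehat{\mathcal{F}}}(\widehat{f}^{-i_k}(\widehat{y_k}))$, it is automatically a subpath of $I^{i_k+2}_{\widehat{\mathcal{F}}}(\widehat{f}^{q-i_k-1}(\widehat{y_k}))-(p,0)$, giving the lemma. The only subtle point, which is not really an obstacle, is verifying that translating the basepoint by $(p_l,0)$ translates the whole transverse trajectory by $(p_l,0)$; this follows from the equivariance of $\widehat{\mathcal{F}}$ under the deck transformation group, so the argument is essentially a direct unpacking of definitions once the recurrence data from Proposition \ref{consequences} is in hand.
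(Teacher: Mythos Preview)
Your proposal is correct and follows essentially the same route as the paper's proof: use the recurrence data for $\widehat{y_k}\in X_\alpha$ from Proposition~\ref{consequences} to bring $\widehat{f}^{q_l}(\widehat{y_k})-(p_l,0)$ close to $\widehat{y_k}$, then invoke the second assertion of Lemma~\ref{lemma10LCT} (with $n=i_k$) to embed $I^{i_k}_{\widehat{\mathcal{F}}}(\widehat f^{-i_k}(\widehat{y_k}))$ into the longer path. The only difference is that you spell out explicitly the step of pushing the convergence back by $\widehat f^{-i_k}$ and the equivariance of $\widehat{\mathcal{F}}$ under integer translations, which the paper leaves implicit.
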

\begin{proof}
 By Proposition \ref{consequences}, one can find an integer $l$ sufficiently large such that $q_l>L+i_k$ and such that $\widehat{f}^{q_l}(\widehat{y_k})-(p_l,0)$ is close to $\widehat{y_k}$. Lemma \ref{lemma10LCT} permits us to conclude that the path $I_{\widehat{\mathcal{F}}}^{i_k}(\widehat{f}^{-i_k}(\widehat{y_k}))$ is a subpath of $I_{\widehat{\mathcal{F}}}^{i_k+2}(\widehat{f}^{q_l-i_k-1}(\widehat{y_k}))-(p_l,0)$. Hence choosing $q=q_l$ and $p=p_l$ we have that $\widehat{\gamma_k}|_{[a,a''_k]}$ is a subpath of $I_{\widehat{\mathcal{F}}}^{i_k+2}(\widehat{f}^{q-i_k-1}(\widehat{y_k}))-(p,0)$. This completes the proof of the lemma.
\end{proof}

Let $\widehat{\gamma^{**}}$ be the transverse path $I_{\widehat{\mathcal{F}}}^{q+1-N_k}(\widehat{f}^{N_k}(\widehat{y_k}))$. We deduce the next result.

\begin{coro}
   The paths $\widehat{\gamma^{**}}$ and $\widehat{\gamma^{**}}+(p-k,0)$ intersect $\widehat{\mathcal{F}}$-transversally at some leaf
   $\phi_{\widehat{\gamma^{**}}(t)}=\phi_{\left(\widehat{\gamma^{**}}+(p-k,0)\right)(s)}$ where $s<t$.
\end{coro}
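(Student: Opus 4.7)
The strategy is to locate inside $\widehat{\gamma^{**}}$ two subpaths whose transverse intersection, after a horizontal integer translation, reduces exactly to the transverse intersection supplied by the previous corollary. First, I would choose $q$ large enough so that $q+1\geq kN$ and $q-i_k-1\geq N_k$. With that choice, $\widehat{\gamma^{**}}=I_{\widehat{\mathcal{F}}}^{q+1-N_k}(\widehat{f}^{N_k}(\widehat{y_k}))$ contains as an initial subpath $\sigma_2:=I_{\widehat{\mathcal{F}}}^{kN-N_k}(\widehat{f}^{N_k}(\widehat{y_k}))$, and as a terminal subpath $\tau:=I_{\widehat{\mathcal{F}}}^{i_k+2}(\widehat{f}^{q-i_k-1}(\widehat{y_k}))$. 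The preceding lemma then gives that $\widehat{\gamma_k}|_{[a,a''_k]}$ is a subpath of $\tau-(p,0)$, while the minimality of $i_k$ (resp.\ maximality of $N_k$) forces $\widehat{\gamma_k}|_{[a,a''_k]}$ to be a subpath of $\sigma_1:=I_{\widehat{\mathcal{F}}}^{i_k}(\widehat{f}^{-i_k}(\widehat{y_k}))$ (resp.\ $\widehat{\gamma_k}|_{[b''_k,b]}$ to be a subpath of $\sigma_2$).

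The previous corollary supplies a $\widehat{\mathcal{F}}$-transverse intersection between $\sigma_1$ and $\sigma_2-(k,0)$. I would next argue that this transverse intersection is actually realized between the subpaths $\widehat{\gamma_k}|_{[a,a''_k]}\subset\sigma_1$ and $\widehat{\gamma_k}|_{[b''_k,b]}-(k,0)\subset\sigma_2-(k,0)$. This is the geometric content of the mechanism producing the previous corollary: since $\widehat{\gamma_k}(a)$ and $\widehat{\gamma_k}(b)$ lie in the same connected component of the complement of $\widehat{U_{\Gamma}}$ while the central portion $\widehat{\gamma_k}|_{[a'_k,b'_k]}$ winds $k$ times around the annulus, Lemma \ref{lemmaproofProposition2} combined with Proposition \ref{proposition20LCT} pins the transverse crossing to the two loop-back ends shifted by $(k,0)$. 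Translating by $(p,0)$, the subpaths $\widehat{\gamma_k}|_{[a,a''_k]}+(p,0)$ and $\widehat{\gamma_k}|_{[b''_k,b]}+(p-k,0)$ still intersect $\widehat{\mathcal{F}}$-transversally.

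Finally, $\widehat{\gamma_k}|_{[a,a''_k]}+(p,0)$ sits as a subpath of $\widehat{\gamma^{**}}$ inside its terminal region (coming from $\tau$), whereas $\widehat{\gamma_k}|_{[b''_k,b]}+(p-k,0)$ sits as a subpath of $\widehat{\gamma^{**}}+(p-k,0)$ inside its initial region (coming from $\sigma_2+(p-k,0)$). Consequently the $\widehat{\mathcal{F}}$-transverse intersection of these translated subpaths promotes to a $\widehat{\mathcal{F}}$-transverse intersection of $\widehat{\gamma^{**}}$ with $\widehat{\gamma^{**}}+(p-k,0)$ at some point $\phi_{\widehat{\gamma^{**}}(t)}=\phi_{(\widehat{\gamma^{**}}+(p-k,0))(s)}$; the parameter $t$ lies in the terminal segment of $\widehat{\gamma^{**}}$ while $s$ lies in the initial segment of $\widehat{\gamma^{**}}+(p-k,0)$, so the inequality $s<t$ is automatic.

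The principal obstacle is justifying rigorously the assertion in the second paragraph — that the transverse intersection furnished by the previous corollary is truly realized on the specific subpaths $\widehat{\gamma_k}|_{[a,a''_k]}$ and $\widehat{\gamma_k}|_{[b''_k,b]}-(k,0)$, rather than elsewhere along $\sigma_1$ or $\sigma_2-(k,0)$. This requires unpacking the loop-back geometry of $\widehat{\gamma_k}$ that underlies Proposition \ref{propo existence good transverse path} and its iterated version, using the fact that $\widehat{\gamma_k}(a)$ and $\widehat{\gamma_k}(b)$ are on the same side of $\widehat{U_{\Gamma}}$ to localize the forced crossing.
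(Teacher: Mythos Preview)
Your proposal is correct and follows essentially the same route as the paper's proof: identify $\widehat{\gamma_k}|_{[a,a''_k]}+(p,0)$ as a subpath of the terminal portion $\widehat{\gamma^{**}}|_{[q-1-i_k-N_k,\,q+1-N_k]}$ and $\widehat{\gamma_k}|_{[b''_k,b]}+(p-k,0)$ as a subpath of the initial portion $\left(\widehat{\gamma^{**}}+(p-k,0)\right)|_{[0,\,kN-N_k]}$, then use the largeness of $q$ to ensure these two parameter intervals are disjoint with the correct order, forcing $s<t$. Your stated obstacle is not a genuine gap: the transverse intersection in the previous corollary arises, by its very construction from the loop-back geometry of $\widehat{\gamma_k}$ (both ends on the same side of $\widehat{U_{\Gamma}}$, crossing at $\widehat{\phi_0}$), precisely on the subpaths $\widehat{\gamma_k}|_{[a,a''_k]}$ and $\widehat{\gamma_k}|_{[b''_k,b]}-(k,0)$, and the paper's own argument is equally terse on this localisation.
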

\begin{proof}
 One knows that the path $\widehat{\gamma_k}|_{[a,a''_k]}+(p,0)$ is a subpath of  $I_{\widehat{\mathcal{F}}}^{i_k+2}(\widehat{f}^{q-1-i_k}(\widehat{y_k}))=\widehat{\gamma^{**}}|_{[q-1-i_k-N_k,q+1-N_k]}$ and $\widehat{\gamma_k}|_{[b''_k,b]}+(p-k,0)$ is a subpath of $\widehat{\gamma^{**}}|_{[0, kN-N_k]}+(p-k,0)$, which implies that $\widehat{\gamma^{**}}|_{[q-1-i_k-N_k,q+1-N_k]}$ and $\widehat{\gamma^{**}}|_{[0, kN-N_k]}+(p-k,0)$ have a $\widehat{\mathcal{F}}$-transverse intersection. The result follows since we took $q$ sufficiently large so that $q-1-i_k-N_k$ is larger than $kN-N_k$.
\end{proof}

As a consequence of Proposition \ref{pr:newperiodicforcing2} we deduce the following corollary.

\begin{coro}
  We have that $\frac{p-k}{q+1-N_k}$ belongs to $\rot(\widehat{f})$.
\end{coro}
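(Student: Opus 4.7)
The plan is to apply Proposition \ref{pr:newperiodicforcing2} essentially verbatim to the path $\widehat{\gamma^{**}}$. First, I would note that by construction this transverse path is admissible of order $q+1-N_k$, and the preceding corollary provides exactly the required $\widehat{\mathcal{F}}$-transverse self-intersection: parameters $s<t$ satisfying $\phi_{\widehat{\gamma^{**}}(t)}=\phi_{(\widehat{\gamma^{**}}+(p-k,0))(s)}$. Moreover, by taking $q$ large enough in the application of Proposition \ref{consequences}(iii) (one is free to take $q=q_l$ with $l$ arbitrarily large), I can arrange $q+1-N_k>0$, so that $\theta:=\frac{p-k}{q+1-N_k}$ is a well-defined negative real number.

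Feeding this data into Proposition \ref{pr:newperiodicforcing2}, with shift $p-k\in\Z$, order $q+1-N_k\geq 1$, and the intersection pattern above, would give a nonempty compact $f$-invariant subset $Q_\theta\subset\Ac$ such that every $z\in Q_\theta$ has $\rot(\widehat{f},z)=\theta$. Then, by Krylov--Bogolyubov, $Q_\theta$ supports an $f$-invariant Borel probability measure $\nu$, which by construction satisfies $\rot(\widehat{f},\nu)=\theta$; this places $\theta=\frac{p-k}{q+1-N_k}$ in $\rot(\widehat{f})$, as required.

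The one subtlety worth flagging is the sign convention: Proposition \ref{pr:newperiodicforcing2} is stated only for $0<\theta\leq p/q$, whereas here $p-k<0$ and hence $\theta<0$. This will be handled by the evident symmetric version of the forcing argument --- running the transverse paths backwards, or equivalently passing to $\widehat{f}^{-1}$ with the foliation reoriented --- which is built into Theorem M of \cite{newlct} (from which Proposition \ref{pr:newperiodicforcing2} is extracted) and yields the analogous range $p/q\leq\theta<0$ when the translation integer is negative. That is the only step requiring real care in the deduction; everything else is direct bookkeeping.
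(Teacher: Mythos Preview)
Your argument is correct and is essentially the paper's own: the paper simply states that the corollary is ``a consequence of Proposition~\ref{pr:newperiodicforcing2}'' without further comment, and the Krylov--Bogolyubov step you spell out is the obvious way to pass from the compact realizing set $Q_\theta$ to membership in $\rot(\widehat f)$. Your explicit flag on the sign convention (and the remedy via the symmetric case of Theorem~M of \cite{newlct}, e.g.\ by conjugating with a reflection or passing to $\widehat f^{-1}$) is a point the paper glosses over, so if anything your write-up is slightly more careful here.
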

As $N_k\ge 1$, this implies also that $\frac{p-k}{q+1-N_k}<\frac{p-k}{q}$, but since $p-\alpha q <1$, we have that $\frac{p-k}{q} -\alpha<0$, a contradiction since we assume that $\rot(\widehat{f})=[\alpha,\beta]$.

\section{Proof of Theorem B}\label{sec:proofoftheoremB}

In this section, we will prove Theorem B. We start by proving a result of uniformly boundedness for the diameter of the projection onto the first coordinate of the leaves of $\widehat{\mathcal{F}}$, where $\widehat{\mathcal F}$ is the lift of a foliation that is transverse to a maximal identity isotopy whose endpoint is a homeomorphism that satisfies hypotheses of Theorem B. This result play a key role in the proof of Theorem B.

\subsection{A result of uniformly boundedness}\label{subsection dynamics ... }

 Let $f$ be a homeomorphism of the closed annulus $\Ac:=\T{1}\times [0,1]$ which is isotopic to the identity, that is, $f$ preserves the orientation and each boundary component of $\Ac$. Suppose that $\A:=\T{1}\times (0,1)$ is a Birkhoff region of instability of $f$.  Let $I'$ be an identity isotopy of $f$ and let $\widehat{f}$ be the lift of $f$ to $\R\times [0,1]$ associated to $I'$. Suppose that $\rot(\widehat{f})=[\alpha,\beta]$ with $\alpha<0<\beta$ and that both boundary component rotation numbers are positive.\\

 We consider the open annulus $\A:=\T{1}\times (0,1)$. We will denote by $N$ (resp. by $S$) the upper (resp. lower) end of $\A$. We recall that the homeomorphism $f$ restrict to the open annulus $\A$ can be extended to a homeomorphism, which we still denote $f$, of the end compactification of $\A$, which is a topological sphere, and this homeomorphism fixes both ends of $\A$. Let $I$ be a maximal identity isotopy larger than $I'$ (isotopy associated to the lift $\widehat{f}$) and let $\widehat{I}$ be a lift of $I$. Let $\mathcal{F}$ be a singular foliation transverse to $I$, and let $\widehat{\mathcal{F}}$ be a lift of $\mathcal{F}\vert_{\A}$. We note that $N$ and $S$ are in $\fix(I)$ and that these are isolated singularities of $\mathcal{F}$. The next result follows from Proposition \ref{propofoliationnearsingularity} which describes the dynamics of a foliation near to an isolated singularity and the fact that the boundary component rotation numbers are positive.

\begin{lemma}\label{lemmaSsinkNsource}
 The isolated singularity $S$ (resp. $N$) is a sink (resp. a source) of $\mathcal{F}$.
\end{lemma}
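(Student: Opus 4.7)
The plan is to apply the trichotomy of Proposition \ref{propofoliationnearsingularity} at each of the isolated singularities $S$ and $N$, and rule out every case except $S$ being a sink and $N$ being a source, using the positivity of the boundary rotation numbers and the Brouwer line property of leaves of $\widehat{\mathcal{F}}$.

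To rule out case (1) at $S$ (closed leaves accumulating on $S$): a sufficiently small closed leaf $\phi$ of $\mathcal{F}$ near $S$ on the sphere must surround $S$, as otherwise it would bound a foliated disk free of singularities, which is impossible by index reasons. Returning to $\A$, $\phi$ is therefore an essential simple closed curve close to $\T{1}\times\{0\}$, and any lift $\widehat\phi\subset \R\times(0,1)$ is a Brouwer line for $\widehat f$ invariant under the deck transformation $T(x,y)=(x+1,y)$. The Brouwer condition $\widehat f(\widehat\phi)\subset L(\widehat\phi)$, combined with the $\widehat{f}$-invariance of $\R\times\{0\}$ and $\R\times\{1\}$, forces exactly one of the two sub-annuli of $\A$ determined by $\phi$ (depending on the orientation of $\widehat\phi$) to be forward $f$-invariant. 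Either alternative directly contradicts the Birkhoff instability of $\A$, since no forward orbit could travel between appropriately chosen small neighborhoods of $\T{1}\times\{0\}$ and $\T{1}\times\{1\}$. The analogous argument at $N$ rules out case (1) there.

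For case (2) and the source subcase of (3) at $S$, it suffices to exclude the existence of any leaf $\phi$ of $\mathcal{F}$ with $\alpha(\phi)=\{S\}$, since a source at $S$ means every leaf near $S$ has this property. Suppose such a leaf $\phi$ exists. By Proposition \ref{propolocallytransverse}, which applies since the sphere minus $S$ contains the other isolated singularity $N$ and so is not a topological sphere, there is a neighborhood $W_S$ of $S$ such that the transverse trajectory $I^1_{\mathcal{F}}(z)$ stays in a prescribed neighborhood $V_S\subset \T{1}\times[0,\delta]$ for every $z\in W_S\setminus\{S\}$. By Lemma \ref{lm: lemma29}, taking $\delta$ small enough, any lift $\widehat z$ in $\R\times[0,\delta]$ has horizontal displacement $p_1(\widehat f(\widehat z))-p_1(\widehat z)>m/2>0$. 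Take $z\in \phi\cap W_S$ close to $S$ and a lift $\widehat z$ close to $\R\times\{0\}$; the corresponding lift $\widehat\phi$ has its $\alpha$-end on $\R\times\{0\}$ and is oriented away from that end, so near its $\alpha$-end it looks essentially vertical pointing upward. A direct orientation computation (obtaining the positively transverse direction by a ninety-degree clockwise rotation of the leaf's tangent to identify the right side, and then observing that positive transversality amounts to crossing from the right to the left) shows that positively transverse paths cross such an upward-oriented leaf only in the $-x$ direction. Inside $V_S$ lifted to the strip $\R\times[0,\delta]$, the arc of $\widehat\phi$ separates its portion of the strip into two components, with $\widehat z$ on the separating leaf and $\widehat{f}(\widehat z)$ strictly to its right by the displacement estimate. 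Hence the transverse trajectory from $\widehat z$ to $\widehat f(\widehat z)$ must cross $\widehat\phi$ in the $+x$ direction, violating positive transversality. This contradiction rules out case (2) and the source subcase of (3) at $S$, so $S$ must be a sink.

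A symmetric argument at $N$ completes the proof: near $\T{1}\times\{1\}$ the positivity of the rotation number again gives uniform rightward displacement, so that the positively transverse direction across leaves near $N$ is $+x$, forcing every leaf in a neighborhood of $N$ to be oriented \emph{downward} (away from $N$). This rules out any leaf with $\omega(\phi)=\{N\}$ by the same orientation contradiction, eliminating both the $\omega$-limit leaf of case (2) and the sink subcase of (3) at $N$; Birkhoff instability rules out case (1) at $N$ exactly as before. The main technical obstacle throughout is the orientation bookkeeping in the universal cover, both for identifying which sub-annulus becomes forward $f$-invariant in case (1) and for pinning down that the positively transverse direction across an upward-oriented leaf near $\R\times\{0\}$ is $-x$ (and across a downward-oriented leaf near $\R\times\{1\}$ is $+x$); once these orientations are verified, the contradictions all follow from the Brouwer line property and Lemma \ref{lm: lemma29}.
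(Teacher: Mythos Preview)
Your overall strategy matches the paper's: apply the trichotomy of Proposition~\ref{propofoliationnearsingularity} and eliminate every alternative except sink at $S$, source at $N$. Your treatment of case~(1) is correct and equivalent to the paper's (the paper works on the sphere: a small closed leaf bounds a disk $D$ about $S$, and transversality gives $f(D)\subset D$ or $f^{-1}(D)\subset D$, contradicting Birkhoff instability).

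The gap is in case~(2) and the source subcase. You assert that because $p_1(\widehat f(\widehat z))>p_1(\widehat z)+m/2$, the point $\widehat f(\widehat z)$ lies ``strictly to the right'' of the arc of $\widehat\phi$ in the strip, so the transverse trajectory would have to cross $\widehat\phi$ in the $+x$ direction. But ``larger first coordinate'' is not the same as ``lying in $R(\widehat\phi)$'' unless the leaf is genuinely vertical, and your claim that $\widehat\phi$ ``looks essentially vertical pointing upward'' near its $\alpha$-end is unjustified: for a topological foliation a leaf with $\alpha(\phi)=\{S\}$ may oscillate wildly in the $x$-direction as it approaches $\R\times\{0\}$, and then $L(\widehat\phi)$ can contain points with first coordinate as large as you like. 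A single application of Lemma~\ref{lm: lemma29} therefore gives no contradiction. You acknowledge that the orientation bookkeeping is the ``main technical obstacle'', but you do not actually carry it out.

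The paper resolves this with an extra normalization: it first conjugates via Schoenflies so that $\phi_S^-\vert_{(-\infty,0]}$ is literally contained in $\{0\}\times(0,1)$; with the leaf straightened, right-to-left crossing really does mean nonincreasing first coordinate inside a small enough lifted neighborhood $\widehat U$. It then takes $z$ so close to the boundary that $\{z,\dots,f^{n-1}(z)\}\subset V$, so that $I^n_{\widehat{\mathcal F}}(\widehat z)$ stays in $\widehat U$ and can only cross the vertical translates $\{k\}\times(\cdot)$ from right to left, whence $p_1(\widehat f^n(\widehat z))<0$ and $\rho_n(\widehat f,z)\le 1/n$. Letting $n\to\infty$ forces the lower boundary rotation number to be nonpositive, the desired contradiction. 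The Schoenflies straightening is exactly what turns your heuristic orientation claim into a rigorous statement; once you insert it, your argument can be completed along either the paper's lines or (with a bit more care) via your single-iterate idea.
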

\begin{proof}
  We will prove that $S$ is a sink of $\mathcal{F}$ (one proves analogously that $N$ is a source). By Proposition \ref{propofoliationnearsingularity} it is sufficient to prove that both Cases (1) and (2) of Proposition \ref{propofoliationnearsingularity} do not hold and that $S$ is not a source. We will prove it by contradiction.\\
  Suppose first that Case (1) of Proposition \ref{propofoliationnearsingularity} holds, that is, there exists an open topological disk $D$ containing $S$ and contained in a small neighborhood of $S$ whose boundary is a closed leaf of $\mathcal{F}$. By transversality of the foliation either $f(D)\subset D$ or $f^{-1}(D)\subset D$. This contradicts the fact that $\A$ is a Birkhoff region of instability of $f$.\\
  Suppose now that Case (2) of Proposition \ref{propofoliationnearsingularity} holds, that is, there exist leaves $\phi_S^+$ and $\phi_S^-$ of $\mathcal{F}$ whose $\omega$-limit and $\alpha$-limit set are reduced to $S$ respectively. We will prove that the existence of $\phi_S^-$ implies that the rotation number of the boundary component $\R\times \{0\}$ is negative or zero. This contradicts our hypothesis.

  \begin{claim}
    Suppose that there exists a leaf $\phi_S^-$ of $\mathcal{F}$ whose $\alpha$-limit set is reduced to $S$. Then the rotation number of the boundary component $\R\times \{0\}$ is negative or zero.
  \end{claim}
  \begin{proof}
Let us parameterize the leaf  $\phi_S^-:\R\to\A$. Conjugating $f$ by a homeomorphism given by Schoenflies' Theorem, we may suppose that $\phi_S^-\vert_{(-\infty,0]}$ is contained in $\{0\}\times (0,1)$. Let $U$ be a Euclidean circle centered at $S$ whose boundary meets $\phi^-_S\vert_{(-\infty,0]}$. By Proposition \ref{propolocallytransverse}, $\mathcal{F}$ is locally transverse to $I$ at $S$. Let $V$ be a neighborhood of $S$ contained in $U$ given by the local transversality of $\mathcal{F}$ to $I$ at $S$: the trajectory of each $z\in V$, $z\neq S$ along $I$ is homotopic, with fixed endpoints, to an arc $I^1_{\mathcal{F}}(z)$ which is transverse to $\mathcal{F}$ and included in $U$. In particular, the arc $I^1_{\mathcal{F}}(z)$ must cross $\phi_S^-$ from right to left. More precisely, let $\widehat{f}$ be the lift of $f\vert_\A$ associated to $I$, and let $\widehat{\mathcal{F}}$ be the lift of $\mathcal{F}\vert_\A$. Let $\widehat{U}$ and $\widehat{V}$ be lifts of the sets $U\setminus\{S\}$ and $V\setminus\{S\}$ respectively. Let $\hat{\phi}_S^-$ be the lift of $\phi_S^-$ contained in the line $\{0\}\times (0,1)$. Let $z\in V\setminus \{S\}$ and let $n\in\N$ such that $\{z,\cdots ,f^{n-1}(z)\}\subset V$. Let $\widehat{z}$ be the lift of $z$ such that $-1<p_1(\hat{z})\leq 0$ and $I^1_{\widehat{\mathcal{F}}}(\widehat{z})$ the lift of the arc $I^1_{\mathcal{F}}(z)$ from $\widehat{z}$. Since the path $ I_{\widehat{\mathcal{F}}}^n(\hat{z})$ is transverse to $\widehat{\mathcal{F}}$ and does not meet the boundary of $\widehat{U}$, we obtain that $p_1(\widehat{f}^n(\widehat{z}))<0$ and thus $$\rho_n(\widehat{f},z):=\frac{1}{n}(p_1(\widehat{f}^n(\widehat{z}))-p_1(\widehat{z}))\leq \frac{1}{n}.$$   This implies that the rotation number of the boundary component $\R\times \{0\}$ is negative or zero.
\end{proof}

We note finally that if $S$ is a source, then by the above claim we deduce that the rotation number of the boundary component $\R\times \{0\}$ is negative or zero. This contradicts again our hypothesis. This completes the proof of the lemma.
\end{proof}

Let $\Gamma_S$ and $\Gamma_N$ be two $\mathcal{F}$-transverse loops close enough to $S$ and $N$ respectively, and let $\gamma_S$ and $\gamma_N$ be their respective natural lifts such that the annuli
$$  U_S:= \bigcup_{t\in\R} \phi_{\gamma_S(t)} \quad \text{ and } \quad U_N:= \bigcup_{t\in\R} \phi_{\gamma_N(t)}.$$
coincide with the attracting and repelling basin of $S$ and $N$ for $\mathcal{F}$ respectively. Let $\widehat{\mathcal{F}}$ be a lift of $\mathcal{F}\vert_\A$ to $\R\times (0,1)$. Now we can state the main result of this subsection, which will also be useful in the proof of Theorem B.

\begin{prop}\label{the diamter of the leaves is bounded}
Up to a suitable change of coordinate, the diameter (on the first coordinate) of the leaves of $\widehat{\mathcal{F}}$ are uniformly bounded.
\end{prop}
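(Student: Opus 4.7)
The plan is to construct a homeomorphism $h$ of $\Ac$ isotopic to the identity that conjugates $\mathcal{F}$ to a foliation whose leaves are vertical in neighborhoods of both boundary components, and then bound the $p_1$-diameter of leaves of the resulting lifted foliation in the remaining compact middle strip.

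First, I would use Lemma \ref{lemmaSsinkNsource} (which says that $S$ is a sink and $N$ is a source of $\mathcal{F}$) together with the local transversality of $\mathcal{F}$ to $I$ at $S$ and $N$ provided by Proposition \ref{propolocallytransverse}. The attracting basin $U_S$, equipped with the restriction of $\mathcal{F}$, is an open essential sub-annulus whose leaves all share $\{S\}$ as their common $\omega$-limit; the transverse loop $\Gamma_S$ is a cross-section to this foliation. A standard classification for oriented non-singular foliations of the open annulus whose leaves share a common end as $\omega$-limit yields a homeomorphism from $U_S$ to $\T{1}\times(0,a_0)$ sending $\mathcal{F}\vert_{U_S}$ to the vertical foliation oriented downward, and $\Gamma_S$ to $\T{1}\times\{a_0\}$. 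An analogous statement holds for $U_N$. These conjugations can be glued with the identity on a compact collar to produce the desired $h$. After conjugation, one may assume $\Gamma_S=\T{1}\times\{a_0\}$, $\Gamma_N=\T{1}\times\{b_0\}$ with $0<a_0<b_0<1$, and the lifted leaves of $\widehat{\mathcal{F}}$ contained in $\widehat{U_S}=\R\times(0,a_0)$ or $\widehat{U_N}=\R\times(b_0,1)$ are vertical segments, hence of zero $p_1$-diameter.

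For the middle strip $\R\times[a_0,b_0]$, I observe that each leaf of $\mathcal{F}$ meets $\Gamma_S$ and $\Gamma_N$ in at most one point by transversality of the foliation to a simple essential closed loop, so every lifted leaf $\widehat{\phi}$ meets each of the horizontal lines $\R\times\{a_0\}$ and $\R\times\{b_0\}$ in at most one point. Consequently the restriction of $\widehat\phi$ to the middle strip consists of at most one connected arc. To bound the $p_1$-diameter of such an arc I would define a partial ``first-exit'' map $\Phi$ from $\R\times\{b_0\}$ to $\R\times\{a_0\}$ by following a leaf downward through the strip. Disjointness of leaves of $\widehat{\mathcal{F}}$ forces $\Phi$ to be monotone in the $p_1$-coordinate, and the $\Z$-equivariance of $\widehat{\mathcal{F}}$ gives $\Phi(x+1)=\Phi(x)+1$. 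Therefore $x\mapsto p_1(\Phi(x))-x$ is $1$-periodic and bounded by a constant $D$, bounding the $p_1$-displacement between the two endpoints of a middle arc. A similar argument, using that $\widehat\phi$ cannot cross any of its $\Z$-translates, controls the $p_1$-oscillation of the arc in its interior and yields the required uniform bound.

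The main obstacle is the normal-form step: extending the purely local radial sink/source structure at $S$ and $N$ to a global conjugation of $\mathcal{F}$ on the full basins $U_S$ and $U_N$ with the vertical foliation, while remaining compatible with a homeomorphism of $\Ac$ isotopic to the identity. A secondary technical point is the treatment of lifted leaves that remain entirely in the middle strip without entering $\widehat{U_S}$ or $\widehat{U_N}$, whose $\alpha$- and $\omega$-limits are $\mathcal{F}$-invariant compact sets in the interior of the strip; for these, the disjointness of $\widehat\phi$ with its $\Z$-translates combined with compactness of the middle strip modulo integer translations suffices to rule out infinite horizontal extent.
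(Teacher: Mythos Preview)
Your plan diverges from the paper's approach in a way that leaves real gaps.

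First, the normal form you propose is too strong. You want a homeomorphism of $\Ac$ that straightens $\mathcal{F}$ to vertical on the \emph{entire} basins $U_S$ and $U_N$, so that $U_S=\T{1}\times(0,a_0)$ and $U_N=\T{1}\times(b_0,1)$. But the frontier of $U_S$ in $\A$ need not be a simple closed curve (it can be a complicated continuum), so there is no reason such coordinates exist on $\Ac$. The paper only straightens $\mathcal{F}$ on a small neighborhood of each end, which is all that the sink/source property guarantees; this is why your ``main obstacle'' is not merely a technical glueing issue but a genuine obstruction.

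Second, and more seriously, your treatment of leaves in the middle strip is incomplete, and this is where the actual content of the proposition lies. The assertion that ``disjointness of $\widehat{\phi}$ with its $\Z$-translates combined with compactness suffices to rule out infinite horizontal extent'' is false as stated: a non-closed leaf in a compact essential annulus can lift to a line of infinite $p_1$-diameter (think of a leaf spiraling toward a closed leaf, or an irrational linear flow), and all its integer translates are pairwise disjoint. Your first-exit map $\Phi$ argument only controls leaves that actually traverse from $\R\times\{b_0\}$ to $\R\times\{a_0\}$; nothing you have written forces leaves to do so. Crucially, you never use the Birkhoff instability hypothesis after the sink/source step, and that hypothesis is exactly what the paper exploits here.

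The paper's argument is genuinely different: after straightening only small collars, it uses Birkhoff instability to find points $z_0,z_1$ whose orbits travel from near $S$ to near $N$ and back, yielding admissible transverse paths that join a leaf $\phi_S\subset U_S$ to a leaf $\phi_N\subset U_N$. Concatenating these gives a transverse loop $\Gamma$ which is null-homologous on the sphere; a dual-function (algebraic intersection) argument then shows every leaf meets $\Gamma$ at most $K$ times. Since each lift of $\phi_S\cup\Gamma\cup\phi_N$ separates the universal cover, this bounds the $p_1$-diameter of every lifted leaf uniformly. The instability hypothesis is doing essential work in producing the separating object, and there is no purely foliation-theoretic shortcut of the kind you sketch.
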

\begin{proof}
Up to a suitable change of coordinate we can suppose that the foliation $\mathcal{F}$ restrict to a neighborhood of $S$ (resp. $N$) coincides with the foliation of vertical lines downward (resp. upward) on $\T{1}\times (0,1)$.
By Lemma \ref{lm: lemma29} and Corollary \ref{Co: corollary210} (applied at both ends of $\A$), there exist a neighborhood $V_S\subset U_S$ (resp. $V_N\subset U_N$) of $S$ (resp. of $N$) and integers $n_S\geq 1$ and $n_N\geq 1$ such that for every $z\in V_S$, $z\neq S$ (resp. $z\in V_N$, $z\neq N$) the closed path $\gamma_S|_{[0,1]}$ (resp. $\gamma_N|_{[0,1]}$) is a subpath of $I_{\mathcal{F}}^{n_S}(z)$ (resp. $I_{\mathcal{F}}^{n_N}(z)$). On the other hand, since $\A$ is a Birkhoff region of instability of $f$ one can find two points $z_0$ and $z_1$ and integers $n_0\geq 1$ and $n_1\geq 1$ satisfying:
$$     z_0, f^{n_1}(z_1) \in \cap_{i=0}^{n_S} f^{-i}(V_S) \quad  \left(\text{resp.} \,\,   z_1, f^{n_0}(z_0) \in \cap_{i=0}^{n_N} f^{-i}(V_N) \right) $$
We will write $\gamma_0:=I_{\mathcal{F}}^{n_0+n_S}(z_0)$ and $\gamma_1:=I_{\mathcal{F}}^{n_1+n_N}(z_1)$ for convenience.
Therefore there exist leaves $\phi_S\subset  U_S$ and $\phi_N\subset  U_N$ of $\mathcal{F}$ and real numbers $s_0<t_0$ and $s_1<t_1$ such that:
\begin{itemize}
  \item $\phi_{\gamma_0(s_0)}=\phi_S$ and $\phi_{\gamma_0(t_0)}=\phi_N$;
  \item $\phi_{\gamma_1(s_1)}=\phi_N$ and $\phi_{\gamma_1(t_1)}=\phi_S$.
\end{itemize}
Replacing $\gamma_0$ by an equivalent transverse path, one can suppose that $\gamma_0(s_0)=\gamma_1(t_1)$ and $\gamma_0(t_0)=\gamma_1(s_1)$. Let $\Gamma$ be the loop naturally defined by the closed path $\gamma_0|_{[s_0,t_0]}\gamma_1|_{[s_1,t_1]}$ which is transverse to $\mathcal{F}$. Since the loop $\Gamma$ is homologous to zero in the sphere, one can define a \textit{dual function} $\delta$ defined up to an additive constant on $\s^2\setminus \Gamma$ as follows: for every $z$ and $z'$ in $\s^2\setminus \Gamma$, the difference $\delta(z)-\delta(z')$ is the algebraic intersection number $\Gamma \wedge \gamma'$, where $\gamma'$ is any path from $z$ to $z'$. As $\Gamma$ is transverse to $\mathcal{F}$ the function $\delta$ decreases along each leaf with a jump at every intersection point. One proves that $\delta$ is bounded and that the space of leaves that meet $\Gamma$, furnished with the quotient topology is a (possibly non Hausdorff) one dimensional manifold (see \cite{lct} for more details). In particular, there exists an integer $K\geq 1$ such that $\Gamma$ intersects each leaf of $\mathcal{F}$ at most $K$ times. Moreover any lift of the set $\phi_{S}\cup \gamma_0|_{[s_0,t_0]}\gamma_1|_{[s_1,t_1]} \cup \phi_N$ separates the plane $\R^2$. More precisely, every set whose diameter on the first coordinate is large enough must intersect a lift of $\phi_{S}\cup \gamma_0|_{[s_0,t_0]}\gamma_1|_{[s_1,t_1]} \cup \phi_N$. Hence, we have that each leaf of $\widehat{\mathcal{F}}$ intersects at most $K$ translates of a lifted of the closed path $\gamma_0|_{[s_0,t_0]}\gamma_1|_{[s_1,t_1]}$. Hence one deduces that for every leaf $\hat{\phi}$ of $\widehat{\mathcal{F}}$ the diameter of the projection on the first coordinate of $\hat{\phi}$ is smaller than the constant $\diam(\gamma_0|_{[s_0,t_0]}\gamma_1|_{[s_1,t_1]})+K+2$. This completes the proof of the proposition.
\end{proof}

\subsection{Proof of Theorem B}

Let $f$ be a homeomorphism of the closed annulus $\Ac:=\T{1}\times [0,1]$ which is isotopic to the identity. Let $\hat{f}$ be a lift of $f$ to $\R\times [0,1]$. We assume that $\rot(\hat{f})=[\alpha, \beta]$ and that the rotation number of both boundary components are strictly larger than $\alpha$, the case where the rotation number of both boundary components are strictly smaller than $\beta$ is similar. Hence considering a rational number $p/q$ between the left endpoint of $\rot(\hat{f})$ and the minimum of the two boundary component rotation numbers, we can replace $f$ by a power $f^q$ and the lift $\hat{f}$ by a lift $\hat{f}^q+(p,0)$, and so suppose that $\rot(\hat{f})=[\alpha,\beta]$ with $\alpha<0<\beta$ and that both boundary component rotation numbers are strictly positive.

We consider the open annulus $\A:=\T{1}\times (0,1)$. We will denote by $N$ (resp. $S$) the upper (resp. lower) end of $\A$. We recall that the homeomorphism $f$ restricted to the open annulus $\A$ can be extended to a homeomorphism, denoted still $f$, of the end compactification of $\A$, which is a topological sphere, and this homeomorphism fixes both ends of $\A$. Let $I$ be a maximal identity isotopy larger than $I'$ (isotopy associated to the lift $\hat{f}$) and let $\hat{I}$ be a lift of $I$. Let $\mathcal{F}$ be a singular foliation transverse to $I$, and let $\widehat{\mathcal{F}}$ be a lift of $\mathcal{F}\vert_\A$.\\

We know from the previous subsection that $S$ and $N$ are a sink and a source of $\mathcal{F}$ respectively and that up to a  conjugation the leaves of $\widehat{\mathcal{F}}$ are uniformly bounded on the first coordinate. Moreover for a positive real number $\delta$ the foliation $\widehat{\mathcal{F}}$ restrict to $\R\times (0,\delta)$ (resp. $\R\times (1-\delta,1)$) is the foliation in vertical lines on $\R\times (0,1)$ oriented downwards (resp. upwards).  Let $\fonc{\hat{\gamma}_N}{\R}{\R\times (0,1)}$ and $\fonc{\hat{\gamma}_S}{\R}{\R\times (0,1)}$ be the transverse path defined by
$$ \hat{\gamma}_N(t):= (t,1-\delta/2) \quad \text{ and } \quad \hat{\gamma}_S(t):= (t,\delta/2).$$
Let us consider
$$\widehat{U}_N:=\cup_{t\in\R} \phi_{\hat{\gamma}_N(t)} \quad \text{ and } \quad \widehat{U}_S:=\cup_{t\in\R} \phi_{\hat{\gamma}_S(t)}.$$

We will begin by proving the following result.

\begin{lemma}\label{lemma1prooftheoB}
  There exist two admissible transverse paths $\hat{\gamma}^*_0:[a_0,b_0]\to \widehat{\A}$ and $\hat{\gamma}^*_1:[a_1,b_1]\to \widehat{\A}$, real numbers $a_0<t_0<b_0, \, a_1<t_1<b_1$ and a real number $K^*>0$ such that:
  \begin{itemize}
    \item[(i)] $\hat{\gamma}^*_0\vert_{[a_0,t_0]}$ and $\hat{\gamma}^*_1\vert_{[t_1,b_1]}$ intersect $\hat{\mathcal{F}}$-transversally; and
    \item[(ii)] for every transverse path $\hat{\gamma}:[a,b]\to \widehat{\A}$ such that $p_1(\hat\gamma(b)-\hat\gamma(a))<-K^{*}$, there exist two integers $p_0$ and $p_1$ such that $\hat{\gamma}$ intersects $\hat{\mathcal{F}}$-transversally both $\left(\hat{\gamma}^*_0+(p_0,0)\right)\vert_{[t_0, b_0]}$ and $\left(\hat{\gamma}^*_1+(p_1,0)\right)\vert_{[a_1,t_1]}$.
  \end{itemize}
\end{lemma}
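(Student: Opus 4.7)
The plan is to use the negative rotation number in $\rot(\widehat{f})$ together with the Birkhoff region of instability hypothesis to construct admissible transverse paths $\widehat{\gamma}^*_0, \widehat{\gamma}^*_1$ that each make a complete vertical excursion between the basins $\widehat{U}_S$ and $\widehat{U}_N$, and then to verify the barrier property (ii) via the uniform horizontal bound on leaves of $\widehat{\mathcal F}$ provided by Proposition~\ref{the diamter of the leaves is bounded}.

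I would combine two ingredients. Since $\alpha<0$ belongs to $\rot(\widehat{f})$, the Ergodic Decomposition Theorem together with Birkhoff's Ergodic Theorem provides an ergodic $f$-invariant measure $\nu$ and a generic bi-recurrent point $z$ of $\nu$ whose lift $\widehat{z}$ satisfies $\rot(\widehat{f},\widehat{z})=\rho<0$, so that $p_1(\widehat{f}^n(\widehat{z}))\to-\infty$ as $n\to+\infty$ and $p_1(\widehat{f}^n(\widehat{z}))\to+\infty$ as $n\to-\infty$. Separately, the Birkhoff region of instability hypothesis provides orbits whose forward iterates traverse between arbitrarily small neighborhoods of the two boundary components, hence produces transverse trajectories that $\widehat{\mathcal F}$-transversely cross $\widehat{\gamma}_S$ and $\widehat{\gamma}_N$ in prescribed ways (using that the foliation is formed by vertical oriented leaves in the regions covered by $\widehat{\gamma}_S, \widehat{\gamma}_N$, together with Lemma~\ref{lm: lemma29}). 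By combining these ingredients via Proposition~\ref{proposition20LCT}, I expect to produce an admissible transverse path that makes a full vertical excursion through both basins while having net leftward horizontal displacement.

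Having obtained such a path, $\widehat{\gamma}^*_0$ (resp.\ $\widehat{\gamma}^*_1$) would be chosen to contain two such excursions---one in $[a_i,t_i]$ and one in $[t_i,b_i]$---so that the second half of $\widehat{\gamma}^*_0$ and the first half of $\widehat{\gamma}^*_1$ each cross translates of both $\widehat{\gamma}_S$ and $\widehat{\gamma}_N$ transversely. A further application of Proposition~\ref{proposition20LCT} and Lemma~\ref{lemma10LCT}, together with density of bi-recurrent points, then yields the $\widehat{\mathcal F}$-transverse intersection required by condition (i). For condition (ii), I would set $K^*$ larger than the horizontal extent of $\widehat{\gamma}^*_0\cup\widehat{\gamma}^*_1$ plus twice the uniform leaf-diameter bound $D$ from Proposition~\ref{the diamter of the leaves is bounded}. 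Any positively transverse path $\widehat{\gamma}$ with $p_1(\widehat{\gamma}(b))-p_1(\widehat{\gamma}(a))<-K^*$ then horizontally straddles suitable translates $\widehat{\gamma}^*_0+(p_0,0)$ and $\widehat{\gamma}^*_1+(p_1,0)$; the bounded-leaf property combined with the vertical-excursion structure of those translates forces $\widehat{\gamma}$ to $\widehat{\mathcal F}$-transversely cross the second half of $\widehat{\gamma}^*_0+(p_0,0)$ and the first half of $\widehat{\gamma}^*_1+(p_1,0)$, with transversality arising at a leaf where the foliation is vertical and the direction mismatch between $\widehat{\gamma}$'s net leftward sweep and the excursion's rightward crossing of $\widehat{\gamma}_S$ or $\widehat{\gamma}_N$ forces the required $\widehat{\mathcal F}$-transverse order of leaves.

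The main obstacle is the construction step: producing, through a careful concatenation of the orbit of $\widehat{z}$ with an instability orbit, an admissible transverse path with two well-separated vertical excursions whose inner endpoints admit an $\widehat{\mathcal F}$-transverse intersection of the required ``first half versus second half'' form. This step requires a delicate interplay between the negative-rotation information (to provide net leftward motion) and the Birkhoff region of instability hypothesis (to provide vertical mixing), all while carefully tracking the cyclic order of leaves near the basin boundaries to ensure that the resulting intersection is genuinely $\widehat{\mathcal F}$-transverse rather than merely topological.
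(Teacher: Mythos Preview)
Your proposal takes a more circuitous route than the paper's and introduces an ingredient---the point $\widehat z$ with negative rotation number---that is not needed for this lemma at all. In the paper the paths $\widehat\gamma^*_0$ and $\widehat\gamma^*_1$ are simply the transverse trajectories $I_{\widehat{\mathcal F}}^{n_0}(\widehat z_0)$ and $I_{\widehat{\mathcal F}}^{n_1}(\widehat z_1)$ of two points supplied directly by the Birkhoff instability (one whose orbit travels from near $S$ to near $N$, the other from near $N$ to near $S$), exactly as in the proof of Proposition~\ref{the diamter of the leaves is bounded}. No concatenation via Proposition~\ref{proposition20LCT}, no ``two excursions'' per path, and no negative-rotation orbit are used in the construction.

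For item (i) the paper's mechanism is concrete and differs from the vague scheme you outline: since $S$ is a sink, one can arrange that $\widehat\gamma^*_0|_{[s_0^-,s_0^+]}$ is equivalent to $\widehat\gamma_S|_{[-1,0]}$ and $\widehat\gamma^*_1|_{[s_1^-,s_1^+]}$ is equivalent to $\widehat\gamma_S|_{[0,1]}$, with both paths leaving $\widehat U_S$ on either side of these subpaths. The $\widehat{\mathcal F}$-transverse intersection then occurs at the common leaf $\phi_{\widehat\gamma_S(0)}$ and is immediate from the relative positions of $\phi_{\widehat\gamma_S(-1)}$, $\phi_{\widehat\gamma_S(1)}$, and the exit points outside $\widehat U_S$. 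This is simpler and more robust than what you propose.

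For item (ii) your barrier idea is in the right spirit, but the transversality does not arise ``at a leaf where the foliation is vertical.'' The paper instead isolates a middle segment $\widehat\gamma^*_0|_{[r_0^-,r_0^+]}$ lying in the complement of $\widehat U_S\cup\widehat U_N$, flanked by points in $\widehat U_S$ and $\widehat U_N$. The union $B$ of leaves met by a slight enlargement of this segment is a simply connected foliated band separating $\R\times(0,1)$ into left and right components $L(B)$, $R(B)$; by the leaf-diameter bound and compactness, $B$ sits in a strip of width $2K$. Any transverse path with horizontal drop exceeding $2K+1$ must cross a translate of $B$ from $R(B)$ to $L(B)$, and since $\widehat\gamma^*_0$ traverses $B$ from a boundary leaf of $L(B)$ to one of $R(B)$, the crossing is $\widehat{\mathcal F}$-transverse. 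The argument for $\widehat\gamma^*_1$ is symmetric.
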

\begin{proof}
\textit{Let us prove (i).}
As in the proof of Proposition \ref{the diamter of the leaves is bounded}, one can find two points $\hat{z}_0$, $\hat{z}_1$ in $\widehat{\A}=\R\times (0,1)$ and two positive integers $n_0$ and $n_1$ such that, if $\hat{\gamma}^*_0:=I_{\hat{\mathcal{F}}}^{n_0}(\hat{z}_0)$ and $\hat{\gamma}^*_1:=I_{\hat{\mathcal{F}}}^{n_1}(\hat{z}_1)$, then there exist real numbers
$$a_0< s^{-}_0<s^{+}_0<t_0\le r_0^{-}<r_0^{+}<b_0$$ satisfying:
\begin{itemize}
  \item $\hat{\gamma}^*_0|_{[s_0^{-},s_0^{+}]}$ is equivalent to the path $\hat{\gamma}_S|_{[-1,0]}$;
  \item $\hat{\gamma}^*_0|_{(s_0^{+},t_0)}$ is contained in $\widehat{U}_{S}$ but it do not meet $\phi_{\widehat{\gamma}_S(1)}$;
  \item $\hat{\gamma}^*_0(t_0)$ does not belong to $\widehat{U}_{S}$;
  \item $\hat{\gamma}^*_0|_{[r_0^{-},r_0^{+}]}$ belongs to the complement of $\widehat{U}_{S}\cup\widehat{U}_{N}$;
  \item If $\varepsilon$ is sufficiently small, then $\hat{\gamma}^*_0(r_0^{-}-\varepsilon)\in \widehat{U}_{S}$ and $\hat{\gamma}^*_0(r_0^{+}+\varepsilon)\in \widehat{U}_{N}$.
\end{itemize}
and real numbers
$$a_1< r_1^{-}<r_1^{+}<t_1\le s_1^{-}<s_1^{+}<b_1$$
satisfying:
\begin{itemize}
  \item $\hat{\gamma}^*_1|_{[s_1^{-},s_1^{+}]}$ is equivalent to the path $\hat{\gamma}_S|_{[0,1]}$;
  \item $\hat{\gamma}^*_1|_{(t_1,s_1^{-})}$ is contained in $\hat{U}_{S}$ but it do not meet $\phi_{\hat{\gamma}_S(-1)}$;
  \item $\hat{\gamma}^*_1(t_1)$ does not belong to $\widehat{U}_{S}$;
   \item $\hat{\gamma}^*_1|_{[r_1^{-},r_1^{+}]}$ belongs to the complement of $\widehat{U}_{S}\cup\widehat{U}_{N}$;
  \item If $\varepsilon$ is sufficiently small, then $\hat{\gamma}^*_1(r_1^{-}-\varepsilon)\in \widehat{U}_{N}$ and $\hat{\gamma}^*_1(r_1^{+}+\varepsilon)\in \widehat{U}_{S}$.
\end{itemize}
We can also assume, by using Corollary \ref{corollary24LCT}, that neither the path $\hat{\gamma}^*_0\vert_{[a_0, b_0]}$ nor the path $\hat{\gamma}^*_1\vert_{[a_1,b_1]}$ have $\widehat{\mathcal{F}}$-transverse self-intersections.
We note that the paths $\hat{\gamma}^*_0\vert_{[s_0^{-},t_0]}$ and $\hat{\gamma}^*_1\vert_{[t_1,s_1^{+}]}$ have a $\hat{\mathcal{F}}$-transverse intersection at $\phi_{\hat{\gamma}_S(0)}$, and so item (i) is proved.\\

\textit{Let us prove (ii).} Let $\varepsilon$ be sufficiently small such that $\hat{\gamma}^*_0(t)$ belongs to $\widehat{U}_{S}$ for any $r_0^{-}-\varepsilon<t<r_0^{-}$ and such that $\hat{\gamma}^*_0(t)$ belongs to $\widehat{U}_{N}$ for any $r_0^{+}<t<r_0^{+}+\varepsilon$.  By Proposition \ref{the diamter of the leaves is bounded} there exists a real number $K_0>0$ such that for each leaf $\hat{\phi}$  of $\widehat{\mathcal{F}}$ the diameter of $p_1(\hat{\phi})$ is bounded by $K_0$ and by compactness there exists a real number $K'_0>0$ such that $\hat{\gamma}^*_0\vert_{[r_0^{-}-\varepsilon,r_0^{+}+\varepsilon]}$ is contained in $(-K'_0,K'_0)\times (0,1)$. Let $K=K_0+K'_0$, and consider a transverse path $\hat{\gamma}$ with diameter on the first coordinate larger than $K^*:=2K+1$. Then there exists an integer $p_0$ such that $\hat{\gamma}$ meets both $(-\infty, p_0-K)\times (0,1)$ and $(p_0+K,+\infty)\times (0,1)$.

Let $B$ be the union of leafs met by $\hat{\gamma}^*_0\vert_{(r_0^{-}-\varepsilon,r_0^{+}+\varepsilon)}$. Note that $B$ is a foliated subset of $\widehat{\A}$ homeomorphic to the plane, and as $\hat{\gamma}^*_0\vert_{[r_0^{-}-\varepsilon,r_0^{+}+\varepsilon]}$ has no $\widehat{\mathcal{F}}$-transverse self-intersection, the space of leafs of $\widehat{\mathcal{F}}$ in $B$ is homeomorphic to an open interval. Furthermore,  since $B$ contains a leaf of $\widehat{U}_{S}$ and a leaf of $\widehat{U}_{N}$,  $B$ separates $\R^{2}$ and its complement has exactly two connected components, one denoted $L(B)$ that contains $(-\infty, -K)\times (0,1)$ and the other, denoted $R(B)$, contains $(K,+\infty)\times (0,1)$. Finally, note that $\phi_{\gamma^*_0(r_0^{-}-\varepsilon)}$ belongs to $\partial L(B)$ and that $B$ is locally to the left of this leaf. Likewise,  $\phi_{\gamma^*_0(r_0^{+}+\varepsilon)}$ belongs to $\partial R(B)$ and that $B$ is locally to the right of this leaf (see Figure \ref{fig:the set B proof of Theorem B}).

Since $\hat{\gamma}(a)$ belongs to $R(B)-(p_0,0)$ and $\hat \gamma(b)$ belongs to $L(B)-(p_0,0)$, one find some $a<a'<b'<b$ such that $\hat{\gamma}(a')$ belongs to $\partial\left(R(B)-(p_0,0)\right)$, $\hat{\gamma}(b')$ belongs to $\partial\left(L(B)-(p_0,0)\right)$, and such that $\hat\gamma\vert_{(a',b')}$ is contained in $B-(p_0,0)$. Note that $\phi_{\hat\gamma(a')}+(p_0,0)$ is not $\phi_{\hat{\gamma}^*_0(r_0^{+}+\varepsilon)}$, since the latter has $B$ on its right, and likewise $\phi_{\hat\gamma(b')}+(p_0,0)$ is not $\phi_{\hat{\gamma}^*_0(r_0^{-}-\varepsilon)}$, since the latter has $B$ on its left. One concludes that $\widehat{\gamma}\vert_{[a',b']}$ has a $\widehat{\mathcal{F}}$-transverse intersection with  $\left(\hat{\gamma}^*_0\vert_{[r_0^{-}-\varepsilon,r_0^{+}+\varepsilon]}+(p_0,0)\right)$
  at any leaf $\phi_{\hat\gamma(t)}$ for $a'<t<b'$. Since the same construction holds for every $0<\varepsilon'<\varepsilon$, we get that $\hat\gamma$ and $\hat{\gamma}^*_0\vert_{[r_0^{-},r_0^{+}]}+(p_0,0)$ intersect $\widehat{\mathcal{F}}$-transversally, as claimed. The $\widehat{\mathcal{F}}$-transverse intersection with $\hat{\gamma}^*_1\vert_{[r_1^{-},r_1^{+}]}+(p_1,0)$ can be obtained in a similar way.
\end{proof}

\begin{center}
\begin{figure}[h!]
  \centering
   \includegraphics{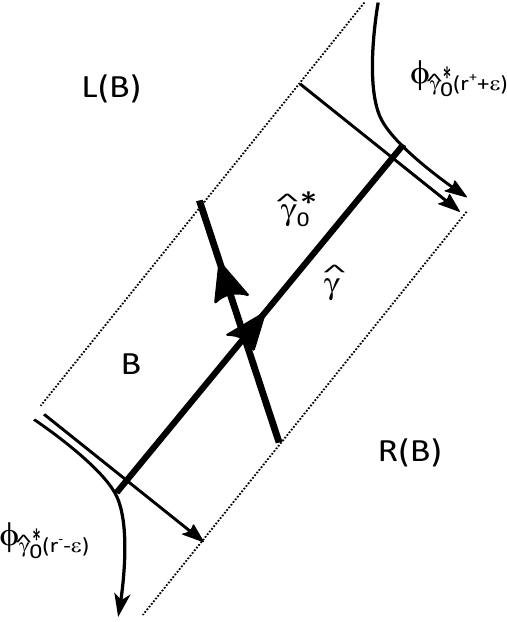}
  \caption{The set $B$.}
  \label{fig:the set B proof of Theorem B}
\end{figure}
\end{center}

\begin{proof}[End of the proof of Theorem B]
One knows by Proposition \ref{the diamter of the leaves is bounded} that the leaves of $\widehat{\mathcal{F}}$ are uniformly bounded on the first coordinate. Hence in order to prove Theorem B it is sufficient to prove that there exists a real constant $L>0$ such that for every admissible transverse path $\hat{\gamma}: [a,b]\to \widehat{\A}$ of order $n\geq 1$, one has
$$  p_1(\hat{\gamma}(b))-p_1(\hat{\gamma}(a))-n\alpha \geq -L.$$

Let $\fonc{\hat{\gamma}}{[a,b]}{\R\times (0,1)}$ be a transverse path such that
$$  p_1(\hat{\gamma}(b))- p_1(\hat{\gamma}(a)) <-2K^*  .$$
One can find $c,d$ in $(a,b)$ with $c<d$ such that
$$  p_1(\hat{\gamma}(b))- p_1(\hat{\gamma}(d)) =-K^* \quad \text{ and } \quad  p_1(\hat{\gamma}(c))- p_1(\hat{\gamma}(a)) =-K^*  .$$
By item (ii) from Lemma  \ref{lemma1prooftheoB} there exist $p_0$ and $p_1$ in $\Z$, $a<l_0<c$, $d<l_1<b$, $r_0^{-}<w_0<r_0^{+}$, $r_1^{-}<w_1<r_1^{+}$  such that
\begin{itemize}
  \item $\hat{\gamma}|_{[a,c]}$ and $\hat{\gamma}^*_0+(p_0,0)$ intersect $\widehat{\mathcal{F}}$-transversally at $\hat{\gamma}(l_0)=\hat{\gamma}^*_0(w_0)+(p_0,0)$;
  \item $\hat{\gamma}|_{[d,b]}$ and $\hat{\gamma}^*_1+(p_1,0)$ intersect $\widehat{\mathcal{F}}$-transversally at $\hat{\gamma}(l_1)=\hat{\gamma}^*_1(w_1)+(p_1,0)$.
\end{itemize}
If $\hat{\gamma}$ is admissible of order $n\geq 1$, then the path
 $$  \hat{\gamma}':= \left(\hat{\gamma}^*_0|_{[a_0,w_0]}+(p_0,0) \right) \hat{\gamma}|_{[l_0,l_1]} \left(\hat{\gamma}^*_1|_{[w_1,b_1]}+(p_1,0) \right)$$
 is admissible of order $n+n_0+n_1$ by Corollary \ref{corollary22LCT} and one has
$$  p_1(\hat{\gamma}(b))- p_1(\hat{\gamma}'(1)) <-K^* \quad \text{ and } \quad  p_1(\hat{\gamma}'(0))- p_1(\hat{\gamma}(a)) <-K^*$$

 Recall that $\hat{\gamma}^*_0$ and $\hat{\gamma}^*_1$ intersect $\widehat{\mathcal{F}}$-transversally (item (i) from  Lemma \ref{lemma1prooftheoB}). One deduces that $\hat{\gamma}'$ intersects $\widehat{\mathcal{F}}$-transversally $\hat{\gamma}'+(p',0)$, where $p'=p_0-p_1$. Proposition \ref{pr:newperiodicforcing2} tells us that $p'/ (n+n_0+n_1)$ belongs to $\rot(\hat{f})$, which implies that $$\alpha \leq \frac{p'}{n+n_0+n_1}.$$
 We write $K^{**}$ by the diameter on the first coordinate of $\hat{\gamma}_0^*$. Observe now that
 $$  p_1(\hat{\gamma}'(1))+p'- p_1(\hat{\gamma}'(0)) >-K^{**}.$$
So, one deduces
$$  p_1(\hat{\gamma}(b))-p_1(\hat{\gamma}(a))-n\alpha \geq -2K^* - K^{**} +\alpha n_0+\alpha n_1.$$
This completes the proof of Theorem B.
\end{proof}

\section{Example}\label{sec:example}

Given a homeomorphism of the closed annulus $\Ac:=\T{1}\times [0,1]$ that is isotopic to the identity and having $\A:=\T{1}\times (0,1)$ as Birkhoff region of instability, and a lift $\hat{f}$ of $f$ to $\R\times [0,1]$ with a nontrivial rotation set, $\rot(\hat{f})=[\alpha,\beta]$, and such that the rotation numbers of both boundary components of $\Ac$ are in the interior of $[\alpha,\beta]$. It follows of Theorem B that there exists a real constant $L>0$ such for every $\hat{z}\in\R\times [0,1]$ and every integer $n\geq 1$ we have
  $$ p_1(\hat{f}^{n}(\hat{z}))-p_1(\hat{z})-\alpha n \geq -L \quad \text{ and } \quad p_1(\hat{f}^{n}(\hat{z}))-p_1(\hat{z})-\beta n \leq L.$$

The following example shows that the hypothesis ``the rotation numbers of both boundary components of $\Ac$ are contained in the interior of the rotation set'' is essential in the conclusion of Theorem B.

\begin{prop}
  There exists a homeomorphism $f$ of the closed annulus $\Ac$ which is isotopic to the identity, such that $\Ac$ is a Birkhoff region of instability of $f$ and that has a lift $\hat{f}$ to $\R\times [0,1]$ satisfying:
  \begin{itemize}
    \item[(i)] $\rot(\hat{f})=[0,1]$, and
    \item[(ii)] for every real number $L>0$ there exist a point $\hat{z}$ in $\R\times [0,1]$ and an integer $n$ such that
  $$ p_1(\hat{f}^{n}(\hat{z}))-p_1(\hat{z}) < -L$$
  \end{itemize}
\end{prop}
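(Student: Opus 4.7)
The plan is to construct $f$ explicitly as a controlled modification of the standard shear $f_0 \colon \Ac \to \Ac$ defined by $f_0(x,y) = (x+y, y)$. The lift $\widehat{f}_0(x,y) = (x+y, y)$ satisfies $\rot(\widehat{f}_0) = [0,1]$ with bottom boundary rotation number $0$ and top boundary rotation number $1$; both of these are endpoints of the rotation set, so the hypotheses of Theorem B are not satisfied. However, $f_0$ is not a Birkhoff region of instability (every horizontal circle is invariant), and moreover the identity $\rho_1(x,y) = y \ge 0$ precludes any negative horizontal displacement. The example will be built from $f_0$ by two successive modifications.

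First, I would compose $f_0$ with a ``vertical pump'' $\tau_1(x,y) = (x, y + v(x,y))$, where $v$ is chosen so that $v(\cdot,0) = v(\cdot,1) = 0$ (so that both boundary components are preserved) and so that, for $f_1 = \tau_1 \circ f_0$, there is an orbit whose $\alpha$-limit is contained in the lower boundary and whose $\omega$-limit is contained in the upper boundary, plus a symmetric orbit with the opposite behaviour. Since the horizontal displacement of $f_1$ agrees with that of $f_0$, namely $\rho_1(x,y) = y$, one automatically has $\rot(\widehat{f}_1) \subset [0,1]$, while the boundary measures still realise the endpoints $0$ and $1$, so $\rot(\widehat{f}_1) = [0,1]$ and $\A$ is a Birkhoff region of instability for $f_1$. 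Second, near a fixed point $p_0$ of $f_1$ on $\T{1}\times\{0\}$ I would modify $f_1$ inside a small topological disk $D$ attached to $p_0$, producing the final map $f$: the modification inserts a special forward orbit of $f$ which, each time it enters $D$, is routed along an arc that, when lifted, connects a neighborhood of a lift $\widehat{p}_0$ to a neighborhood of $\widehat{p}_0 - (1,0)$. Thus every visit to $D$ contributes $-1$ to the horizontal displacement of the special orbit in the lift.

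Since $\A$ is a Birkhoff region of instability for $f$, the special orbit can be forced to re-enter $D$ arbitrarily many times, yielding horizontal displacement $< -L$ for every $L > 0$, which proves item (ii). The main obstacle is verifying that these modifications preserve $\rot(\widehat{f}) = [0,1]$. One addresses this by three checks. (a) The modifications are supported on topological disks, and by Franks' theorem (Theorem \ref{frankstheorem}) this prevents the creation of periodic points with rotation numbers outside $[0,1]$, so the rotation set cannot contain any rational outside $[0,1]$, hence no real number outside $[0,1]$ either. (b) The special orbit can be arranged so that its return times $N_k$ to $D$ satisfy $N_k/k \to +\infty$; then any weak-$\ast$ accumulation point of its empirical measures has rotation number equal to $0$, so this orbit does not generate any invariant measure with rotation number outside $[0,1]$. (c) Any $f$-invariant measure not seeing the special orbit is supported on the unperturbed shear dynamics or on the boundary, and there $\rho_1 \in [0,1]$ pointwise. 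Combining (a)--(c) gives $\rot(\widehat f) = [0,1]$, completing the construction.
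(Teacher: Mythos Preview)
There is a genuine gap in your second modification step.

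If $D$ is a genuinely small disk near $p_0 \in \T{1}\times\{0\}$ and you only alter $f_1$ inside $D$ (so that $f = f_1$ on $\Ac\setminus D$), then $f(D) = f_1(D)$, and since $f_1$ is close to the identity near the bottom boundary, both $D$ and $f(D)$ lift to sets of small diameter near $\widehat{p}_0$. The horizontal displacement of any point of $D$ under $\widehat{f}$ is therefore bounded by the diameter of $\widehat D \cup \widehat{f}(\widehat D)$, which is small; it cannot equal $-1$. So the mechanism ``each visit to $D$ contributes $-1$'' is impossible for a small disk. If instead you intend $D$ to be a long disk whose lift stretches across a fundamental domain, you must say so, and then the control in (a)--(c) becomes much more delicate than you indicate.

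Even granting a mechanism producing displacement $-c$ per visit to $D$, the claim that ``since $\A$ is a Birkhoff region of instability, the special orbit can be forced to re-enter $D$ arbitrarily many times'' is a non sequitur: Birkhoff instability only guarantees orbits connecting neighborhoods of the two ends, not recurrence of a prescribed orbit to a prescribed disk. You would have to \emph{build} this recurrence into the modification. And even then, between visits to $D$ the orbit is governed by $f_1$, whose displacement is $y\ge 0$ per step; if the return times satisfy $N_k/k\to\infty$ as you require for (b), the accumulated positive displacement over those $N_k$ steps may well dominate the $-ck$ from the visits, so it is not clear the total ever tends to $-\infty$. Finally, your appeal to Franks' Theorem in (a) needs $f$ to preserve a measure of full support, which your construction does not provide.

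For comparison, the paper avoids all of this by making the orbits with large negative displacement \emph{wandering} rather than recurrent. It builds two invariant ``tubes'' $U$ and $V$ whose lifts follow the graph of $h(y)=1/y$ near the bottom; a homeomorphism $g$ of $[0,1]$ with $g(y)<y$ and $1/y-1/g(y)\to 0$ moves points up the $V$-tube with horizontal increment $h(g^{-1}(y))-h(y)\to 0$ as $y\to 0$, so the map extends continuously to the boundary, yet an orbit starting at height $y$ accumulates total horizontal displacement $\approx 2-1/y$, unbounded below as $y\to 0$. Since each such orbit travels once from one boundary to the other and never returns, it supports no invariant measure in $\A$; a separate twist supported in a compact middle strip then produces $\rot(\widehat f)=[0,1]$. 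The key idea you are missing is precisely this: obtain unbounded negative displacement from a \emph{family} of wandering orbits (one for each $L$, starting ever closer to the boundary), not from repeated returns of a single orbit.
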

\begin{proof}
For every real number $r$, we write $T_r$ the homeomorphism of $\R\times [0,1]$ defined by $T_r : (x,y)\mapsto (x+r,y)$. Let $h:(0,1)\to \R$ be such that $h(y)=1/y$ if $y<1/2$ and $h(y)=2$ if $y\ge 1/2$.

For every $ y\in(0,1)$, let
$$I'_y:=\left(h(y)+\frac{1}{16},h(y)+\frac{3}{16}\right)\times \{y\}  \subset \left(h(y),h(y)+\frac{1}{4}\right)\times \{y\}:=I_y$$
be two subsets of $\R\times [0,1]$ and consider
$$\hat{U}'_0:= \bigcup_{y\in (0,1)} I'_y  \subset \bigcup_{y\in (0,1)} I_y:=\hat{U}_0 \quad \text{ and } \quad
\hat{V}'_0 := T_{1/2}(\hat{U}'_0) \subset T_{1/2}(\hat{U}_0):= \hat{V}_0. $$

Let finally,
$$\hat{U}':= \bigcup_{n\in \Z} T_1^n(\hat{U}'_0)  \subset \bigcup_{n\in \Z} T_1^n(\hat{U}_0):=\hat{U} \quad \text{ and } \quad
\hat{V}' := \bigcup_{n\in \Z} T_1^n(\hat{V}'_0)  \subset \bigcup_{n\in \Z} T_1^n(\hat{V}_0):=\hat{V}. $$

Let $\fonc{g}{[0,1]}{[0,1]}$ be a homeomorphism of $[0,1]$ satisfying:
\begin{itemize}
  \item  $g(0)=0$, $g(1)=1$;
  \item for every $y\in (0,1)$ we have $g(y)<y$;
  \item { $\lim\limits_{y\to 0} \left(\frac{1}{y}- \frac{1}{g(y)}\right)=0$.}
\end{itemize}
We note that the dynamics of $g$ is well-known, that is, for every $y\in(0,1)$ we have
\begin{equation}\label{equ dynamics of g}
 \lim_{n\to +\infty}  g^n(y)=0 \quad \text{ and } \quad  \lim_{n\to +\infty}  g^{-n}(y)=1.
\end{equation}

To construct $\hat{f}$ we start with a homeomorphism $\hat{f}_0$ satisfying:
$$ \hat{f}_0(x,y)=\begin{cases}
  \left(x-h(y)+h(g(y)),g(y)\right),& \text{ if} \quad (x,y) \in \hat{U}'_0\subset \hat{U}_0,\\
  \left(x-h(y)+h(g^{-1}(y)),g^{-1}(y)\right),& \text{ if} \quad (x,y) \in \hat{V}'_0\subset \hat{V}_0.\\
\end{cases}$$
 Note that $\hat{f}_0$ leaves both $\hat{U}'_0$ and $\hat{V}'_0$ invariant. Let us begin by extending $\hat{f}_0$ to $\hat{U}_0$ and to $\hat{V}_0$ such that $\hat{f}_0$ leaves both this sets invariant, such that  $$ \begin{cases}
  p_2(\hat{f}_0(x,y))<y,& \text{if} \quad (x,y) \in \hat{U}_0,\\
  y< p_2(\hat{f}_0(x,y))  ,& \text{if}\quad (x,y) \in \hat{V}_0,\\
\end{cases}$$
where $p_2:\R\times [0,1] \to [0,1]$ is the projection on the second coordinate, such that $\hat{f}_0$ extends continuously as the identity to the boundary of $\hat{U}_0\cup \hat{V}_0\cap \A$ and finally that $p_1(\hat{f}_0(x,y))=x$ if $g(y)\ge 1/2$. Now extend $\hat{f}_0$ to $\hat{U}$ and $\hat{V}$ by the formula $\hat{f}_0 T_1=T_1 \hat{f}_0$. Finally we extend $\hat{f}_0$ to $\R\times [0,1]$ such that $\hat{f}_0$ coincides with the identity on the complement of the union of $\hat{U}$ and of $\hat{V}$. Note that $\hat{f}_0$ extends continuously to $\R\times\{0,1\}$ since  $\lim_{y\to 0} (h(y)-h(g(y)))=\lim_{y\to 1} (h(y)-h(g(y)))=0$.

We note that by construction $\hat{f}_0$ commutes with $T_1$, and that for every $i\in \Z$ the sets
$T_i(\hat{U}'_0)$ and $T_i(\hat{V}'_0)$ are $\hat{f}_0$-invariant. Hence it is easy to check that if $(x,y)\in \hat{V}'$ and $n\geq 1$, then
\begin{equation*}
\hat{f}_0^n(x,y)= \left(x-\frac{1}{y}+\frac{1}{g^{-n}(y)},g^{-n}(y)\right).
\end{equation*}
Thus,
$$ p_1(\hat{f}_0^n(x,y))-x = \frac{1}{g^{-n}(y)}-\frac{1}{y},$$
so one knows that the function $p_1(\hat{f}_0^n(x,y)- (x,y))$ is not bounded from below.
Moreover from \eqref{equ dynamics of g}, $\A=\T{1}\times (0,1)$ is a Birkhoff region of instability of $f_0$, the projection of $\hat{f}_0$ to $\Ac$. More precisely, the $f_0$-orbit of each point $z\in U'=\hat{\pi}(\hat{U}'_0)$ goes from the upper boundary component of $\Ac$ to the lower one, that is
  \begin{equation}\label{equ A region insta f0}
        \lim_{n\to +\infty } p_2(f_0^{-n}(z))=1 \quad \text{ and } \quad \lim_{n\to +\infty } p_2(f_0^{n}(z))=0.
  \end{equation}
and the $f_0$-orbit of each point $z\in V'=\hat{\pi}(\hat{V}'_0)$ goes from the lower boundary component of $\Ac$ to the upper one.
Let $y_1=\frac{g^{-1}(1/2)+g^{-2}(1/2)}{2}$. Consider $\hat{f}_1(x,y):=T_{\varphi(y)}(x,y)$, where $\varphi:[0,1]\to [0,1]$ is a continuous function satisfying:
\begin{itemize}
  \item $\varphi(y)=0$ if $y\in [0,g^{-1}(1/2)]\cup [g^{-2}(1/2),1]$; and
  \item $\varphi(y_1)=1$.
\end{itemize}
We note that $\hat{f}_1$ commutes with $T_1$, the compact strip $$S_0:=\R\times[g^{-1}(1/2),g^{-2}(1/2)]$$ is an $\hat{f}_1$-invariant set, and $\hat{f}_1$ acts as the identity on the complement of $S_0$. We now consider $\hat{f}:= \hat{f}_1\circ \hat{f}_0$ which is a homeomorphism of $\R\times [0,1]$ and which commutes with $T_1$. It remains to prove that $\hat{f}$ satisfies the properties described in the proposition.\\

To see that $\A$ is a Birkhoff region of instability of $f$, note that if we choose some point $\hat z_0=(x_0,g^{-1}(1/2))\in \hat{V}'$ then $\hat{f}^{n}(\hat z_0)$ belongs to $\R\times [g^{-2}(1/2), 1]$ for all $n>0$, and to $\R\times [0, g^{-1}(1/2)]$ for all $n\le 0$. In particular,  $\hat{f}^{n}(\hat z_0)=\hat{f}_0^{n}(\hat z_0)$. Therefore $z_0=\hat{\pi}(\hat z_0)$ goes from the lower boundary to the upper boundary component of $\Ac$. We can likewise choose a point $\hat z_1 = (x_1, g^{-1}(1/2))\in \hat{U}'$ and then $\hat{f}^{n}(\hat z_1)$ belongs to $\R\times [g^{-2}(1/2), 1]$ for all $n<0$, and to $\R\times [0, g^{-1}(1/2)]$ for all $n\ge 0$.  Therefore $z_1=\widehat{\pi}(\hat z_1)$ goes from the upper boundary component to the lower boundary component of $\Ac$.

We note also that any point in $\R\times [0, g^{-1}(1/2)]$ belongs to either $\hat V$, to $\hat U$ or is a fixed point of $\hat f_0$. One verifies trivially that the set $\R\times [0, g^{-1}(1/2)]\cap \hat V$ is forward invariant for $\hat f_0$ and therefore also for $\hat f$, and every point is a wandering point. One also verifies that  $\R\times [0, g^{-1}(1/2)]\cap \hat U$ is backward invariant for $\hat f$, and thus all its points are also wandering. Therefore any point in $\R\times [0, g^{-1}(1/2)]$ that is not wandering must be fixed by $\hat f$. A similar argument shows that any point of $\R\times [g^{-2}(1/2), 1]$ that is not wandering for $\hat f$ is also fixed. Thus any periodic point of $f$ that does not lift to a fixed point of $\hat f$ must have its whole orbit contained in the annulus $\T{1}\times[g^{-1}(1/2),g^{-2}(1/2)]$. Now, for any point $(x,y)$ in $S_0$, we have that $0\le p_1(\hat f(x,y)-(x,y))\le 1$, so one gets that $\rot(\hat{f})\subset [0,1]$. Finally, take the point $(1/2,y_1)$, which lies in the boundary of $\hat U$. Therefore $\hat f_0(1/2,y_1)=(1/2,y_1)$ and therefore $\hat{f}(1/2,y_1)=(1/2+1,y_1)$. Therefore $1\in \rot(\hat{f})$ and since the closed annulus is a region of instability, we get that $\rot(\hat{f})$ is a closed interval containing both $0$ and $1$. We deduce that $\rot(\hat{f})= [0,1]$.
\end{proof}

\section{Realization results}\label{sec:realizationresults}

In this section, we will prove Theorems C and D.

\subsection{Proof of Theorem C}

Theorem C will be a consequence of the following stronger proposition, since being a Mather region of instability is stronger than being a $SN$ mixed region of instability.

\begin{prop}\label{pr:realizationSN}
Let $f:\Ac\to\Ac$ be a homeomorphism which is isotopic to the identity, and let $\widehat f$ be a lift of $f$ to the universal covering. Suppose that $\T{1}\times(0,1)$ is a $SN$ mixed region of instability. For every $\rho$ in $\rot(\widehat{f})$ there exists a compact invariant set $Q_\rho$ such that for every point of $Q_\rho$ has a rotation number well-defined and it is equal to $\rho$. Moreover, if $\rho=p/q$ is a rational number, written in an irreducible way, then  $Q_\rho$ is the orbit of a period point of period $q$.
\end{prop}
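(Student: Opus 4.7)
The plan is to mimic the strategy used for Theorem B, building admissible transverse paths whose horizontal translates have $\widehat{\mathcal{F}}$-transverse self-intersections, then invoking Proposition \ref{pr:newperiodicforcing2} to realize rotation numbers by compact invariant sets. First I would fix a maximal identity isotopy $I$ of $f$ lifting to an identity isotopy of $\widehat f$ (Theorem \ref{existence maximal isotopy}), a transverse foliation $\mathcal{F}$ (Theorem \ref{existence transverse foliation}), and its lift $\widehat{\mathcal{F}}$. Collapsing the boundary circles to points $S$ and $N$ makes them isolated singularities of $\mathcal{F}$. If $\rho$ coincides with the rotation number of one of the boundary components of $\Ac$, then that invariant circle serves as $Q_\rho$, and in the rational case $\rho=p/q$ irreducible, the restriction of $f$ to this circle is a circle homeomorphism with rotation number $p/q$, which gives a period-$q$ orbit. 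So in what follows I restrict to $\rho\in\rot(\widehat f)$ distinct from both boundary rotation numbers.

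The $SN$ mixed region of instability hypothesis gives, in every neighborhood of $S$, a point $z_1$ whose forward orbit converges to $N$ and a point $z_2$ whose backward orbit converges to $N$. Using Lemma \ref{lemma10LCT} on sufficiently long subpaths of $I^{\N}_{\mathcal{F}}(z_1)$ and $I^{-\N}_{\mathcal{F}}(z_2)$, together with the local transversality of $\mathcal{F}$ at the isolated singularities (Proposition \ref{propolocallytransverse}), I produce admissible transverse arcs crossing $\A$ from arbitrarily close to $S$ up to arbitrarily close to $N$, and vice versa. Following the template of Lemma \ref{lemma1prooftheoB}, I would then concatenate horizontal translates of these arcs via Proposition \ref{proposition20LCT} and Lemma \ref{corollary22LCT} to obtain, for each rational $p/q$ with appropriate sign in a dense subset of $\rot(\widehat f)$, an admissible transverse path $\widehat{\gamma}$ of order $q$ together with an integer translate such that $\widehat{\gamma}$ and $\widehat{\gamma}+(p,0)$ intersect $\widehat{\mathcal{F}}$-transversally at interior parameters $s<t$.

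Applying Proposition \ref{pr:newperiodicforcing2} to each such pair yields compact invariant sets $Q_{\theta}$ on which every point has rotation number $\theta$, for a dense set of $\theta$ in $\rot(\widehat f)$. For arbitrary $\rho\in\rot(\widehat f)$ I would take a sequence $\theta_n\to\rho$ of such rational values and extract a Hausdorff limit $Q_\rho$ of a subsequence of $(Q_{\theta_n})$. The set $Q_\rho$ is compact, $f$-invariant, and by Lemma \ref{lm:rotationnumber of the limit} every $f$-invariant ergodic measure supported on $Q_\rho$ has rotation number $\rho$; a standard uniform-convergence argument, again using Lemma \ref{lm:rotationnumber of the limit}, then upgrades this to: every point of $Q_\rho$ has well-defined rotation number equal to $\rho$. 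In the rational case $\rho=p/q$ with $\gcd(p,q)=1$, the forcing argument (inside Theorem M of \cite{newlct} which underpins Proposition \ref{pr:newperiodicforcing2}) in fact produces a horseshoe structure yielding a genuine periodic orbit; since the integer data of the construction give a period dividing $q$ and rotation number $p/q$ with $p$ and $q$ coprime, the period must be exactly $q$.

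The main obstacle is the second paragraph: producing the admissible transverse path $\widehat{\gamma}$ of order $q$ whose self-intersection (after translation by the prescribed integer $p$) lies at parameters $s<t$ with the correct sign, uniformly for $p/q$ ranging over a dense subset of the entire rotation interval on both sides of $\rho$. This requires carefully combining the forward transverse trajectory of $z_1$ (which produces paths going ``upward'' through the foliation) with the backward transverse trajectory of $z_2$ (which produces paths going ``downward'') and controlling the total horizontal displacement modulo $q$ after $q$ iterations. The $SN$ mixed hypothesis is exactly what allows both types of crossings to be combined into a single closed-up admissible path with the desired $\widehat{\mathcal{F}}$-transverse self-intersection, and once this path is built the forcing theory immediately delivers the required compact invariant set and, in the rational case, the periodic orbit of period $q$.
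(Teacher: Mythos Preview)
Your approach has a genuine gap at the Hausdorff-limit step, and it misses the key dichotomy the paper uses. You claim that if $Q_\rho$ is a Hausdorff limit of sets $Q_{\theta_n}$ with $\theta_n\to\rho$, then ``by Lemma~\ref{lm:rotationnumber of the limit} every $f$-invariant ergodic measure supported on $Q_\rho$ has rotation number $\rho$''. But Lemma~\ref{lm:rotationnumber of the limit} \emph{assumes} that hypothesis; it does not prove it. A Hausdorff limit of sets each carrying a single rotation number can easily support ergodic measures with other rotation numbers (think of periodic orbits accumulating on a larger invariant set). So the upgrade from ``dense set of $\theta$'' to ``all $\rho$'' is not justified.

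The paper avoids this entirely by splitting into two cases. For $\rho$ in the \emph{interior} of $\rot(\widehat f)$, no limit is needed: after passing to $g=f^q$ and shifting so that $0<q\rho-p$, one builds (using the $SN$ mixed hypothesis exactly as you describe) a single admissible path $\widehat\gamma$ of order $n$ with a $\widehat{\mathcal{F}}$-transverse intersection with $\widehat\gamma+(j,0)$ for some $j>n$. Proposition~\ref{pr:newperiodicforcing2} then directly yields a compact set $Q_\theta$ for \emph{every} $\theta\in(0,j/n]$, in particular for $\theta=q\rho-p$, so no approximation is required. For $\rho$ an \emph{endpoint} of $\rot(\widehat f)$, say $\rho=\alpha$, the forcing construction cannot reach $\alpha$ (the interval in Proposition~\ref{pr:newperiodicforcing2} is open at $0$), and your limit argument fails as noted. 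Here the paper instead invokes Theorem~B: bounded deviation $p_1(\widehat f^n(\widehat z))-p_1(\widehat z)-n\alpha\ge -L$ combined with Atkinson's Lemma (Proposition~\ref{lemmaatkinson}) shows that the closed set $X_\alpha=\overline{\bigcup_{\rot(\mu)=\alpha}\supp(\mu)}$ has every point with rotation number exactly $\alpha$. This use of Theorem~B is the essential missing ingredient in your outline.
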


The proof of the proposition is immediate when $\rot(\widehat f)$ is a single point, as in this case every point of $\Ac$ has a well defined rotation number and this number is unique. Therefore we can assume that $\rot(\widehat f)=[\alpha, \beta]$, with $\alpha<\beta$.
We divide the proof of Proposition \ref{pr:realizationSN} in two cases, when $\rho$ is in the boundary of the rotation set, and when it is in the interior.

\subsubsection{When $\rho$ is a boundary point}

 We show that $\alpha$ is realized by a compact invariant set, the case for $\beta$ is similar. If any of the boundary components rotation numbers is $\alpha$ it suffices to take $Q_\alpha$ as the boundary component, therefore we assume that $\alpha$ is strictly smaller than the rotation numbers of the boundary components and thus that $f$ satisfies the hypotheses of Theorem B, since being a SN mixed region of instability is stronger than being a Birkhoff region of instability. Let us consider
$$ \mathcal{M}_{\alpha}:=\{ \mu \in\mathcal{M}_{f}(\Ac) : \rot(\mu)=\alpha   \} \quad \text{ and } \quad X_\alpha:= \overline{ \bigcup_{\mu \in \mathcal{M}_\alpha} \supp(\mu)}.   $$
By the following proposition we can take $Q_\alpha$ as the set $X_\alpha$, completing the proof of the proposition in this case

\begin{prop}
  Let $\alpha$ be in the boundary of the rotation set of $\widehat f$. Every measure supported on $X_\alpha$ belongs to $\mathcal{M}_\alpha$. Moreover, if $\widehat{z}$ lifts a point $z$ of $X_\alpha$, then for every integer $n\geq 1$, we have $$  \abs{p_1(\widehat{f}^n(\widehat{z}))-p_1(\widehat{z})-n\alpha} \leq L,  $$
  where $L$ is the constant given by Theorem B.
\end{prop}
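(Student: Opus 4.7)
The plan is to derive both assertions from Theorem B combined with the Atkinson-type recurrence provided by Corollary~\ref{cr:atkinsonourcase}. Since by the standing assumption $\alpha$ is strictly smaller than the rotation numbers of both boundary components, Theorem B applies and yields the one-sided bound
$$p_1(\widehat{f}^n(\widehat{z})) - p_1(\widehat{z}) - n\alpha \geq -L$$
for every $\widehat z \in \R\times[0,1]$ and every $n\geq 1$. The rest of the argument is devoted to obtaining the opposite inequality on the set $X_\alpha$.

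First I would establish the inequality on a dense subset of $X_\alpha$. Because $\alpha$ is an endpoint of $\rot(\widehat f)$, the ergodic decomposition of any $\mu\in\mathcal{M}_\alpha$ is supported on ergodic measures that still belong to $\mathcal{M}_\alpha$, so $X_\alpha$ coincides with the closure of the union of the supports of the \emph{ergodic} measures in $\mathcal{M}_\alpha$. Pick one such ergodic $\nu$ and apply Corollary~\ref{cr:atkinsonourcase} to the displacement function $\varphi(z) = p_1(\widehat{f}(\widehat z)) - p_1(\widehat z) - \alpha$, whose $\nu$-integral vanishes. For $\nu$-almost every $z$ one obtains a sequence $q_l \to \infty$ and integers $p_l$ such that $\widehat f^{q_l}(\widehat z) - (p_l,0) \to \widehat z$ and $p_l - q_l \alpha \to 0$. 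Given any fixed $n \geq 1$, applying the Theorem B lower bound at $\widehat f^n(\widehat z)$ over $q_l - n$ iterates yields
$$p_1(\widehat f^{q_l}(\widehat z)) - p_1(\widehat f^n(\widehat z)) - (q_l - n)\alpha \geq -L,$$
and substituting $p_1(\widehat f^{q_l}(\widehat z)) = p_1(\widehat z) + p_l + o(1)$ together with $p_l = q_l\alpha + o(1)$ gives $p_1(\widehat f^n(\widehat z)) - p_1(\widehat z) - n\alpha \leq L$ after passing to the limit $l\to\infty$. The Atkinson-recurrent points form a $\nu$-conull subset of $\supp(\nu)$, hence a dense subset; since the function $z \mapsto p_1(\widehat f^n(\widehat z)) - p_1(\widehat z)$ is continuous and independent of the chosen lift of $z$, the bound $|p_1(\widehat f^n(\widehat z)) - p_1(\widehat z) - n\alpha| \leq L$ extends by continuity first to each $\supp(\nu)$ and then to all of $X_\alpha$.

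The first assertion now follows quickly: for any $f$-invariant Borel probability $\nu$ supported on $X_\alpha$, the uniform bound combined with $f$-invariance of $\nu$ gives
$$\left| \rot(\widehat f,\nu) - \alpha \right| = \left| \frac{1}{n}\int (p_1(\widehat f^n(\widehat z)) - p_1(\widehat z) - n\alpha)\,d\nu(z) \right| \leq \frac{L}{n}$$
for every $n\geq 1$, so $\rot(\widehat f, \nu) = \alpha$ and $\nu \in \mathcal{M}_\alpha$. The main obstacle of the strategy is the density step: it relies critically on the ergodicity of $\nu$ (to make the set of Atkinson-recurrent points $\nu$-conull and thus dense in $\supp(\nu)$) and on $\alpha$ being an endpoint of the rotation set (to guarantee that almost every ergodic component of each element of $\mathcal{M}_\alpha$ still belongs to $\mathcal{M}_\alpha$, so that ergodic measures alone suffice to generate $X_\alpha$).
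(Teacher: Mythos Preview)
Your proposal is correct and follows essentially the same approach as the paper: both use Theorem B for the lower bound, apply Atkinson-type recurrence (via Corollary~\ref{cr:atkinsonourcase}) to an ergodic $\nu\in\mathcal{M}_\alpha$ to obtain the upper bound on a $\nu$-conull set via the same telescoping identity, and then extend to $X_\alpha$ by density and continuity. Your write-up is in fact slightly more explicit than the paper's in spelling out why ergodic measures suffice to generate $X_\alpha$ and in deriving the first assertion from the second by integration.
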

\begin{proof}
  We note that it is sufficient to prove the second statement. We recall that, if $\mu \in \mathcal{M}_\alpha$, then every ergodic measure $\nu$ that appears on the ergodic decomposition of $\mu$ also belongs to $\mathcal{M}_\alpha$. It is sufficient to prove that for every ergodic measure $\nu$ in $\mathcal{M}_\alpha$, there exists a set $A$ of full measure such that if $\widehat{z}$ lifts a point $z$ of $X_\alpha$, then for every integer $n\geq 1$, we have
$$  \abs{p_1(\widehat{f}^n(\widehat{z}))-p_1(\widehat{z})-n\alpha} \leq L,$$
  since this implies that the above inequality must hold in the whole support of $\nu$.

  By Atkinson's Lemma, Proposition \ref{lemmaatkinson}, there exists a set $A$ of $\nu$-full measure, such that if $\widehat{z}$ lifts a point $z$ of $A$, then there exists a subsequence of integer $(q_l)_{l\in\N}$ such that
    $$  \lim_{l\to +\infty} \widehat{f}^{q_l}(\widehat{z})- \widehat{z}-(q_l\alpha,0)=0.     $$
  On the other hand, by Theorem B we know that for every $\widehat{z}\in \R\times [0,1]$, and every integer $n\geq 1$ we have
    \begin{equation}\label{equ3}
    p_1(\widehat{f}^n(\widehat{z}))-p_1(\widehat{z})-n\alpha \geq - L.
    \end{equation}
     It remains to check that, if $\widehat{z}$ is a lift of $z\in A$, then
       \begin{equation}\label{equ4}
 p_1(\widehat{f}^n(\widehat{z}))-p_1(\widehat{z})-n\alpha \leq  L.
    \end{equation}
     Indeed, if $l$ is large enough such that $q_l$ is greater than $n$, one can write
  \begin{align*}
     & p_1(\widehat{f}^n(\widehat{z}))-p_1(\widehat{z})-n\alpha=\\
     & = p_1(\widehat{f}^{q_l}(\widehat{z}))-p_1(\widehat{z})-q_l\alpha - \left( p_1(\widehat{f}^{q_l-n}(\widehat{f}^n(\widehat{z})))-p_1(\widehat{f}^n(\widehat{z}))-(q_l-n)\alpha    \right); \\      & \leq p_1(\widehat{f}^{q_l}(\widehat{z}))-p_1(\widehat{z})-q_l\alpha  +  L.
  \end{align*}
    Letting $l$ tend to $+\infty$, we obtain inequality \eqref{equ4}. The proposition follows of inequalities \eqref{equ3} and \eqref{equ4}.
\end{proof}

\subsubsection{When $\rho$ is an interior point}

We already know the existence of an $f$-invariant compact set with rotation number equal to $\rho$ if $\rho\in\{\alpha, \beta\}$ or if $\rho$ is the rotation number of one of the boundary components of $\Ac$. So we assume $\rho\in (\alpha, \beta)$ and $\rho$ is not the rotation number of the upper boundary. We can also assume that the rotation number of the upper boundary is $\rho_1$ larger than $\rho$, the case where it is smaller is again similar. We pick some rational number $p/q$ such that
$$\alpha<p/q<\rho<(p+1)/q<(p+2)/q<\rho_1\le \beta.$$

Let $g=f^q$, and $\widehat g =\widehat f^q-(p,0)$. Let $I'$ be an identity isotopy of $g$, such that its lift to $\widehat{\Ac}$, $\widehat{I'}$, is an identity isotopy of $\widehat{g}$. By Theorem \ref{existence maximal isotopy} one can find a maximal identity isotopy larger than $I'$. By Theorem \ref{existence transverse foliation} one can find an oriented singular foliation $\mathcal{F}$ on $\A$ which is transverse to $I$, its lift to $\widehat{\Ac}$, denoted by $\widehat{\mathcal{F}}$ is transverse to $\widehat{I}$ (the lift of $I$). Note that the rotation number of the upper boundary for $\widehat g$ is larger than 2. Our goal is to show that there exist some integer $n>0$ and some transverse trajectory $\widehat \gamma$ such that $\widehat \gamma$ is admissible of order $n$ and such that $\widehat \gamma$ has a $\widehat{\mathcal{F}}$-transverse intersection with $\widehat \gamma+ (j,0)$ where $j>n$. If we do this, Proposition \ref{pr:newperiodicforcing2} implies that for all $0<\theta\le j/n$, there exists a $g$-invariant compact set with rotation number $\theta$ for $\widehat g$, and we deduce that $f$ has a compact set with rotation number $\rho$.\\
To construct the transverse path $\gamma$, let us first note that: the transverse trajectory of a point $z$ in the upper boundary of the annulus is equivalent to the canonical lift $\gamma'$ of a simple transverse loop $\Gamma'$, and if $\widehat{\gamma'}$ is the lift of $\gamma'$, then for every $t\in \R$, $\widehat{\gamma'}(t+1)=\widehat{\gamma'}(t)+(1,0)$. Moreover, if $\epsilon>0$ is fixed and if $m$ is a sufficiently large integer, then for every $\widehat z$ very close to the upper boundary  the transverse path $I_{\widehat{\mathcal{F}}}^{\Z}(\widehat z)\vert_{[0,m]}$ contains a subpath equivalent to a translate of $\widehat{\gamma'}\vert_{[0,(2+\epsilon/2)m]}$ (this last property comes from the fact that the rotation number of the upper boundary is larger than $2$). Let $U_\Gamma$ be the annulus of the leaves crossed by $\Gamma$. Note that, as $\rot({\widehat g})$ has a negative real number, one can deduce that $U_\gamma$ does not intersect a neighborhood $V$ of the lower boundary component of $\Ac$.

Since $\A$ is a $SN$ mixed region of instability for $f$, it is also a $SN$ mixed region of instability for $g$ , we have that there exist points  $z_{S,N}$ and $z_{N,S}$, both lying in $V$, such that the $\omega$-limit set of $z_{S,N}$ for $g$ is contained in $\T{1} \times\{1\}$ and such that the $\alpha$-limit set for $g$ of $z_{N,S}$ is also contained in $\T1 \times\{1\}$.
This implies that there exists an integer $n_0$ sufficiently large  such that, for all integer $n\geq n_0$, $g^{n}(z_{S,N})$ and $g^{-n}(z_{N,S})$ are very close to the lower boundary. Let $\widehat{U_\Gamma}$ be a lift of $U_{\Gamma}$. Taking some lift $\widehat{z_{N,S}}$ of $z_{N,S}$, one get that, if $m$ is an integer sufficiently large, the transverse path $I_{\widehat{\mathcal{F}}}^\Z(\widehat{z_{N,S}})\vert_{[-m-n_0,0]}$ ends outside of $\widehat{U_\Gamma}$ but contains a subpath that is equivalent to a translate of $\widehat{\gamma'}\vert_{[0,(2+\epsilon/2)m]}$. Similarly, taking some lift $\widehat{z_{S,N}}$ of $z_{S,N}$, one get that, if $m$ is an integer sufficiently large, the transverse path $I_{\widehat{\mathcal{F}}}^\Z(\widehat{z_{S,N}})\vert_{[0,m+n_0]}$ starts outside of $\widehat{U_\Gamma}$ but contains a subpath that is equivalent to a translate of $\widehat{\gamma'}\vert_{[0,(2+\epsilon/2)m]}$. Each of those paths is admissible of order $n_0+m$. By Proposition \ref{proposition20LCT}, if $m$ large enough, one can construct a transverse path $\widehat \gamma$ that starts and ends outside of $\widehat{U_\Gamma}$, and that contains as a subpath a translate of $\widehat{\gamma'}\vert_{[0,(2+\epsilon/2)m]}$. The path $\widehat \gamma$ is admissible of order $2(n_0+m)$ and has a $\mathcal{F}$-transverse intersection with $\widehat \gamma +(j,0)$, where $j> (2+\epsilon/4)m$. Note that, as $n_0$ is fixed, if $m$ is sufficiently large, then $j>2(n_0+m)$. This ends the proof of Proposition \ref{pr:realizationSN}.

\subsection{Proof of Theorem D}

Before we start the proof of Theorem D, let us state a simple corollary of Proposition \ref{pr:realizationSN}.

\begin{coro}\label{cor:Birkhoffrealization}
Let $f:\Ac\to\Ac$ be a homeomorphism which is isotopic to the identity, and let $\widehat f$ be a lift of $f$ to the universal covering. Suppose that $f$ preserves a measure of full support, and that $\A=\T1\times(0,1)$ is a Birkhoff region of instability. For every $\rho$ in $\rot(\widehat{f})$ there exists a compact invariant set $Q_\rho$ such that for every point of $Q_\rho$ has a rotation number well-defined and it is equal to $\rho$. Moreover, if $\rho=p/q$ is a rational number, written in an irreducible way, then  $Q_\rho$ is the orbit of a period point of period $q$.
\end{coro}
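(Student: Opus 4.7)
The strategy is to reduce to Proposition~\ref{pr:realizationSN} by first using Proposition~\ref{pr:semi-mather} to pass to an $SN$ mixed region of instability. When $\rot(\widehat f)$ is a singleton $\{\rho_0\}$, every $f$-invariant ergodic measure has rotation number $\rho_0$, and both the realization by a compact set and (in the rational case) by a periodic orbit follow from classical ergodic and periodic-point arguments under the full-support hypothesis; we therefore assume $\rot(\widehat f) = [\alpha,\beta]$ with $\alpha < \beta$. Proposition~\ref{pr:semi-mather} then produces an $f$-invariant open essential sub-annulus $A_2 \subset \A$ which is an $SN$ mixed region of instability for $f|_{A_2}$, whose prime ends compactification $A_2^*$ carries the natural extension $f^*$ of $f|_{A_2}$, and such that $\rot(\widehat{f^*}) = \rot(\widehat f)$ for compatibly chosen lifts.

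Proposition~\ref{pr:realizationSN} applied to $f^*$ on $A_2^*$ yields, for every $\rho \in \rot(\widehat f)$, a compact $f^*$-invariant set $Q^*_\rho \subset A_2^*$ with every point of rotation number $\rho$, which is the orbit of a periodic point of period $q$ when $\rho=p/q$ is written in lowest terms. To produce the desired $Q_\rho \subset \Ac$, consider the natural continuous $f$-equivariant surjection $\Pi: A_2^* \to \overline{A_2} \subset \Ac$ which is the identity on $A_2$ and maps each prime ends circle onto the corresponding invariant essential continuum of $\Ac$: the lower boundary $\T{1}\times\{0\}$, and the invariant continuum $\omega_{\varepsilon_0}$ built in the proof of Proposition~\ref{pr:semi-mather}. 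Setting $Q_\rho := \Pi(Q^*_\rho)$ produces a compact $f$-invariant subset of $\Ac$.

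To verify that every point of $Q_\rho$ has rotation number $\rho$, and to recover the period-$q$ statement in the rational case, there are three situations. For points of $Q_\rho$ already in $A_2$, the claim is immediate by compatibility of the lifts. For points in $\T{1}\times\{0\}$, this is the classical fact that every point of a circle homeomorphism has the same rotation number, which coincides here with the rotation number of the lower prime ends circle of $A_2^*$, hence $\rho$. The main obstacle is the case of points landing in $\omega_{\varepsilon_0}$: this requires the analogue of Proposition~\ref{pr:continuumwithinessentialinterior} in which area-preservation is replaced by the existence of an invariant Borel probability measure of full support. Area-preservation enters the proof of that proposition only through Lemma~\ref{lm:unbounded_disk}, to rule out wandering points in pre-compact $f$-periodic topological disks; this conclusion holds equally under the full-support assumption, since every nonempty open set then has positive measure. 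The same generalization delivers a period-$q$ orbit inside $\omega_{\varepsilon_0}$ in the rational case, completing the argument.
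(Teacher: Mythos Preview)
Your overall strategy coincides with the paper's: apply Proposition~\ref{pr:semi-mather} to obtain an $SN$ mixed sub-annulus $A_2$, apply Proposition~\ref{pr:realizationSN} on the prime ends compactification $A_2^*$, and then transfer the resulting $Q^*_\rho$ back to $\Ac$. The difficulty lies entirely in this last transfer step, and there your argument has a genuine gap.

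You assert the existence of a ``natural continuous $f$-equivariant surjection $\Pi: A_2^* \to \overline{A_2}$'' that is the identity on $A_2$ and maps each prime ends circle onto the corresponding boundary continuum. Such a map does not exist in general. The identity on $A_2$ extends continuously to the prime ends boundary if and only if the corresponding boundary continuum is locally connected (Carath\'eodory's theorem); while this holds for the lower boundary $\T{1}\times\{0\}$, there is no reason for the upper continuum $\omega_{\varepsilon_0}$ to be locally connected. A sequence in $A_2$ converging to a single prime end may accumulate on the entire impression of that prime end, which can be a nondegenerate sub-continuum of $\omega_{\varepsilon_0}$. Without continuity of $\Pi$, you cannot conclude that $\Pi(Q^*_\rho)$ is compact, nor can you transport rotation numbers along it.

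The paper sidesteps this by a dichotomy rather than a projection. Either $Q^*_\rho\subset A_2$, in which case one simply takes $Q_\rho=Q^*_\rho$; or $Q^*_\rho$ meets a prime ends circle, in which case that entire circle has rotation number $\rho$, so the prime ends rotation number of the corresponding boundary continuum $K$ is $\rho$. One then invokes Proposition~\ref{pr:boundaryannulus} (not Proposition~\ref{pr:continuumwithinessentialinterior}) to conclude that every point of $K$ itself has rotation number $\rho$, and takes $Q_\rho=K$. This is the step where the link between the prime ends rotation number and the actual rotation numbers on the boundary is made, and it is exactly what your argument via $\Pi$ was trying to encode. Your observation that the area-preservation hypothesis in these propositions can be relaxed to a full-support invariant measure is well taken and is needed here as well.
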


\begin{proof}
Since $\T1\times(0,1)$ is a Birkhoff region of instability we can find, by Proposition \ref{pr:semi-mather}, that there exists an essential open annulus $A\subset \Ac$ which is a $SN$ mixed region of instability, and such that, if $A^{*}$ is the prime ends compactification of $A$, $f^{*}$ is the extension of $f$ to $A^{*}$, then there exists $\widehat{f^{*}}$ a lift of $f^{*}$ such that $\rot(\widehat{f^{*}})= \rot(\widehat{f})$.  By Proposition \ref{pr:realizationSN} we deduce that for any $\rho$ in  $\rot(\widehat{f})$ there exists a closed subset  $Q^{*}_{\rho}$ which is $f^{*}$-invariant and such that the $\widehat{ f^{*}}$ rotation number of any point in $Q^{*}_{\rho}$ is $\rho$. If $Q^{*}_{\rho}$ is contained in $A$, then it suffices to take $Q_{\rho}=Q^{*}_{\rho}$. Otherwise there exists a point in the boundary of $A^{*}$ with rotation number $\rho$, which implies that the rotation number of the restriction of $f^{*}$ to one of these boundaries is $\rho$. But this implies that there exists a connected component $K$ of the boundary of $A$ which is an essential continuum, such that the prime ends rotation number of $K$ is exactly $\rho$, and as the dynamics preserve a measure of full support, every point in $K$ has the same rotation number by Proposition \ref{pr:boundaryannulus}, in which case it suffices to take $K=Q_{\rho}$.
\end{proof}

\subsubsection{Proof of Theorem D}
In the following let $f$ be an area-preserving homeomorphism of $\Ac$ which is isotopic to the idenity. Let $\widehat{f}$ be a lift of $f$ to $\widehat{\Ac}$. If the rotation set of $\widehat{f}$ is a singleton, then the result is obvious as every point in the annulus will have the same rotation number. If not, then by a result from Franks (see \cite{franksannals1, franksannals2}), for every rational number $p/q$ in the interior of $\rot(\widehat{f})$ there exists a point $z_{p/q}$ in $\Ac$ whose rotation number is $p/q$. Therefore it suffices to show the result for $\rho$ in $\rot(\widehat{f})$ which is an irrational number. We can also assume that the rotation number of each boundary of $\Ac$ is not $\rho$, otherwise we are also done.

We will consider the extension $f'$ of $f$ to the open annulus $\A'=\T1\times\R$ by assuming that $f'(x,y)=f(x,1)+(0,y-1)$ if $y>1$ and  $f'(x,y)=f(x,0)+(0,y)$ if $y<0$, and by choosing a compatible lift $\widehat{f'}$ that is also an extension of $\widehat{f}$ to $\R^2$. We still denote $f$ to this extension. Let $(p_n/q_n)_{n\in\N}$ be a sequence of rational points in the interior of the rotation set of $\widehat{f}$ converging to $\rho$, and let $(z_n)_{n\in\N}$ be a sequence of periodic points of $\Ac$ such that for every integer $n$, $z_n$ has rotation number $p_n/q_n$. We assume that $(z_n)_{n\in\N}$ is converging, otherwise we take a subsequence, and let $\overline{z}$ be the limit of this sequence. We repeat a construction from \cite{KoropeckiTal2012StrictlyToral}. For each real number $\varepsilon>0$, consider the set
$$U'_{\varepsilon}=\bigcup_{i\in\Z}f^{i}(B_{\varepsilon}(\overline{z})),$$
where as usual $B_{\varepsilon}(z)$ is the $\varepsilon$ open ball centered at $z$. One can show, since $\overline{z}$ is non-wandering, that $U'_{\varepsilon}$ is an open set with finitely many connected components which are all periodic. Furthermore,  the connected component of $U'_{\varepsilon}$ that contains $\overline{z}$ cannot be contained in a topological disk, because every recurrent point in a periodic topological disk must have the same rational rotation number. As this component contains $z_n$ for any sufficiently large $n$, we would obtain a contradiction. This implies that the connected component $O_{\varepsilon}$ of $U'_{\varepsilon}$ that contains $\overline{z}$ (and consequently $B_{\varepsilon}(\overline{z})$) is essential. Since $f$ permutes the connected components of $U'_{\varepsilon}$, one has that either $O_{\varepsilon}$ is invariant or it is disjoint from its image by $f$. But the later cannot happen, as it would imply that every point in $O_{\varepsilon}$ is wandering since $O_{\varepsilon}$ is essential. Therefore $O_{\varepsilon}$ is invariant, and as it contains $B_{\varepsilon}(\overline{z})$ one deduces that $O_{\varepsilon}=U'_{\varepsilon}$. Finally note that, since $\overline{z}$ belongs to $\Ac$ and since $B_{\varepsilon}(\overline{z})\subset \T1\times [-\varepsilon, 1+\varepsilon]$, by the construction of the extension of $f$ one has that $U'_{\varepsilon}\subset \T1\times [-\varepsilon, 1+\varepsilon]$ and thus it separates the two ends of $\A'$.

Let $U_{\varepsilon}$ be the filling of $U'_{\varepsilon}$, that is, the union of $U'_{\varepsilon}$ with all connected components of its complement that are bounded in $\A'$.  Note that $U_{\varepsilon}$ is a topological open annulus, also contained in $\T1\times [-\varepsilon, 1+\varepsilon]$. Note also that $U_{\varepsilon_1}\subset U_{\varepsilon_2}$ if $\varepsilon_1<\varepsilon_2$. We will consider the set $K'_0=\bigcap_{\varepsilon\in(0,1)}\overline{U_{\varepsilon}}$. Now let $K_0$ be the filling of $K'_0$, and note that $K_0\subset \Ac$, that its complement contains only two connected components, $U_{+}$ which contains $\T1\times(1,\infty)$ and $U_{-}$, which contains $\T1\times(-\infty,0)$, and both these components are topological sub-annuli of $\A'$.

\begin{lemma}\label{lm:stayclose}
Given a neighborhood $W$ of $K_0$, there exists some $n_0=n_0(W)$ such that for all $n>n_0$ the orbit of $z_n$ remains in $W$ for all time.
\end{lemma}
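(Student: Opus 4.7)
My approach combines a direct invariance observation with a nested compactness argument, so that the lemma reduces to showing that the family $\{U_\varepsilon\}_{\varepsilon>0}$ shrinks to $K_0$ in the sense that every neighborhood of $K_0$ contains $U_\varepsilon$ for all sufficiently small $\varepsilon$.

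The direct step is immediate: for any $\varepsilon>0$ and any $n$ large enough that $z_n\in B_\varepsilon(\overline{z})$, the entire forward and backward $f$-orbit of $z_n$ lies in $U'_\varepsilon$, since by definition $U'_\varepsilon=\bigcup_{i\in\Z}f^{i}(B_\varepsilon(\overline{z}))$ and so $f^k(z_n)\in f^k(B_\varepsilon(\overline{z}))\subset U'_\varepsilon$ for every $k\in\Z$. In particular this orbit is contained in $U_\varepsilon$. Granted the reduction above, it then suffices to pick $\varepsilon>0$ with $U_\varepsilon\subset W$ and then choose $n_0$ large enough that $z_n\in B_\varepsilon(\overline{z})$ for all $n>n_0$.

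The key step is to show that $\{U_\varepsilon\}_{\varepsilon>0}$ shrinks to $K_0$, which I would prove by contradiction. Suppose some neighborhood $W$ of $K_0$ fails; then there exist $\varepsilon_k\to 0$ and $w_k\in U_{\varepsilon_k}\setminus W$. Because $U_{\varepsilon_k}\subset\T1\times[-\varepsilon_k,1+\varepsilon_k]$, eventually all the $w_k$ lie in the compact set $\T1\times[-1,2]$, and a subsequence converges to some $w$; since $W$ is open, $w\notin W$. By the monotonicity $U_{\varepsilon_k}\subset U_{\varepsilon_j}$ for $k\geq j$ noted in the excerpt, one has $w_k\in U_{\varepsilon_j}$ for all $k\geq j$, hence $w\in\overline{U_{\varepsilon_j}}$ for every $j$. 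Intersecting over $j$ gives $w\in\bigcap_{\varepsilon>0}\overline{U_\varepsilon}=K'_0\subset K_0\subset W$, a contradiction.

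The main delicate point is the passage from $K'_0$ to its filling $K_0$: one should not hope that $U_\varepsilon$ shrinks down to $K'_0$ itself, because the $U_\varepsilon$ already include bounded complementary components, but the lemma only asks for a neighborhood of the larger set $K_0$, and this is automatic from the inclusion $K'_0\subset K_0$. Combining the two pieces yields the lemma with $n_0=n_0(W)$ chosen as above.
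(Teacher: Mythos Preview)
Your proof is correct and follows essentially the same approach as the paper: both arguments show that for sufficiently small $\varepsilon$ the (closure of the) invariant set $U_\varepsilon$ is contained in $W$, and then use that $z_n\in B_\varepsilon(\overline{z})\subset U'_\varepsilon$ for large $n$. The paper phrases the compactness step as a finite-subcover argument on the complements $\overline{U_{1/n}}^{\,C}$ restricted to $\T1\times[-1,2]$, whereas you phrase it sequentially via a limit point $w\in K'_0$; these are the two standard formulations of the same compactness idea.
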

\begin{proof}
We may assume that $W$ is open and contained in $\T1\times(-1,2)$, because $W\cap (\T1\times(-1,2))$ is also a neighborhood of $K_0$. Note that, if $\varepsilon<1$, then $\T1\times(-1,2)^C\subset \overline{U_{\varepsilon}}^C$. As $W^C\cap (\T1\times[-1,2])$ is compact and as $(\overline{U_{\frac{1}{n}}}^C)_{n\in\N}$ is an increasing sequence of open sets whose union contains $W^C$, one deduces that there must exists some $n_0>1$ such that $\overline{U_{\frac{1}{n_0}}}^C$ contains both $W^C\cap (\T1\times[-1,2])$ and $\T1\times(-1,2)^C$. Therefore $W \supset \overline{U_{\frac{1}{n_0}}}$. Since the later set is invariant and contains $z_n$ for sufficiently large $n$, the result follows.
\end{proof}

If $K_0$ has empty interior, then Theorem 2.8 of \cite{Koropecki} shows that the rotation number of any point in $K_0$ is the same, and this number must be $\rho$ by the previous lemma and Lemma \ref{lm:rotationnumber of the limit}, and so we can take $Q_{\rho}=K_0$. So we can assume that the interior of $K_0$ is not empty. If the interior of $K_0$ is inessential, then every point in $K_0$ has the same rotation number by Proposition \ref{pr:continuumwithinessentialinterior}, and this number must be $\rho$ by the previous lemma and Lemma \ref{lm:rotationnumber of the limit}, and so we can take $Q_{\rho}=K_0$. So we can assume that the interior of $K_0$ is essential. Therefore $\partial U_{-}$ is disjoint from $\partial U_{+}$, and their union is $\partial K_0$. We also remark that, as $K_0$ is contained in $\Ac$, the restriction of the dynamics to $K_0$ is nonwandering.

Consider now first the case where $\overline{z}$ belongs to $\partial K_0$.  We claim that in this case, the open sub-annuli $\A^*$ which is the interior of $K_0$ must be a Birkhoff region of instability. Let us first examine the case where $\overline{z}$ belongs to $\partial U_{-}$. If by contradiction the interior of $K_0$ is not a Birkhoff region of instability, as the restriction of $f$ to the interior of $K_0$ is nonwandering, one can find $V_{-}, V_{+}$ open invariant and disjoint neighborhoods of $\partial U_{-}$ and $\partial U_{+}$ respectively. But there must exists some $\delta>0$ such that $B_{\delta}(\overline{z})\subset V_{-}$ and therefore $U'_{\delta}$, and $U_{\delta}$, are disjoint from $V_{+}$. But this contradicts the fact that $\partial U_{+}\subset \overline{U_{\delta}}$. A similar argument shows that $\A^*$ is a Birkhoff region of instability if $\overline{z}\in\partial U_{+}$.

Let us consider two possibilities. First, assume that there exists an infinite subsequence $(z_{n_k})_{k\in\N}$ such that, for all $k$, $z_{n_k}$ does not belong to $K_0$. We can assume with no loss in generality that $z_{n_k}$ belong to $U_{+}$, and this implies that $\overline{z}$ belongs to $\partial U_{+}$. Note that every point in $\partial U_{+}$ has the same rotation number by Proposition \ref{pr:boundaryannulus}. Further note that if $W$ is a neighborhood of $\partial U_{+}$, then there exists a neighborhood $W'$ of $K_0$ such that $W\cap U_{+}=W'\cap U_{+}$ and therefore for sufficiently large $k$ one gets by Lemma \ref{lm:stayclose} that the whole orbit of $z_{n_k}$ is contained in $W$. This again implies that the rotation number of every point in $\partial U_{+}$ must be $\rho$ and we can take $Q_{\rho}=\partial U_{+}$.

Assume now that for all but finitely many $n\in\N$, that $z_n$ lie in $K_0$. Since $\partial K_0=\partial U_{-}\cup\partial U_{+}$, one deduces that the rotation number of points in $\partial K_0$ can have at most two values, and so we can assume that all $z_n$ lie in $\A^{*}$. Let $\widetilde f$ be the extension of $f$ to the prime ends compactification of $\A^{*}$, which is homeomorphic to $\Ac$ and $\widehat{\widetilde f}$ a lift of $\widetilde f$ which is compatible with $\widehat f$. Note that the rotation set of $\widehat{\widetilde f}$ contains $p_n/q_n$ for sufficiently large $n$ and, by being closed, must also contain $\rho$.  By Corollary \ref{cor:Birkhoffrealization} one finds a set $Q'_{\rho}$ in the prime ends compactification of $\A^{*}$. If $Q'_{\rho}$ lies in $\A^{*}$, we are done, just taking $Q_{\rho}=Q'_{\rho}$. If not, this implies that the prime end rotation number of $f'$ at one of the ends of $\A^{*}$ must be $\rho$. In this case, one deduces, again by Proposition \ref{pr:boundaryannulus}, that there exists a connected component $Q$ of the boundary of $\A^{*}$ in $\A$ such that every point in $Q$ has rotation number $\rho$ and we are done.

The final case we need to consider is when $\overline{z}$ belongs to $\A^*$. Again, if $\A^*$ is a Brikhoff region of instability, we can repeat the same argument as in the previous paragraph. Assume then that $\A^{*}$ is not a Birkhoff region of instability. In this case, one can find disjoint invariant neighborhoods $V_{-}$ and $V_{+}$ of the ends of $\A^*$. By eventually ``filling''  $V^{-}$ and $V^{+}$ with the connected components of its complement that are contained in $\A^{*}$, we can assume that both sets are essential open topological annuli.

We claim first that $\overline{z}$ belongs to the boundary of $V^{-}$. Indeed, if by contradiction one has that $\overline{z}$ is in the interior of $V^{-}$, then there exists some $\varepsilon>0$ such that $U_{\varepsilon}$ is contained in $V^{-}$, which contradicts $V^{+}\subset K_0$. Likewise, if $\overline{z}$ lies is in the interior of the complement of $V^{-}$, then again there exists $\varepsilon>0$ such that $U_{\varepsilon}$ is disjoint from $V^{-}$, which contradicts $V^{-}\subset K_0$. A similar argument shows that $\overline{z}\in \partial V^{+}$.  Let us then consider $K'=\partial V^{-}\cup\partial V^{+}$ and let $K$ be the union of $K'$ with the connected components of its complements that are contained in $\A^{*}$, and note that $\A^{*}\setminus K= V^{-}\cup V^{+}$ and that the interior of $K$ is inessential, otherwise  $\partial V^{-}\cap \partial V^{+}$ would be empty.  Finally, note that both $V^{-}$ and $V^{+}$ are Birkhoff regions of instability, the argument here being the same as in the case where $\overline{z}$ belonged to $\partial K_0$.

Since the interior of $K$ is inessential, we know that every point in it has the same rotation set. We can therefore assume, by possibly erasing a term of the sequence, that $z_n$ does not lie in $K$. Therefore either infinitely many of the points $z_n$ lie in $V^{-}$, or infinitely many of them lie in $V^{+}$.  But both $V^{-}$ and $V^{+}$ are Birkhoff regions of instability, and the same reasoning as in the case where $\overline{z}$ belonged to $\partial K_0$ and the $z_n$ lied in $\A^{*}$ can be applied to deduce the result.

\bibliographystyle{koro}
\bibliography{closedannulusjonathanbib}

  \textsc{Jonathan Conejeros. Departamento de Matem\'atica y Ciencia de la Computaci\'on, Universidad de Santiago de Chile, Avenida Libertador Bernardo O''Higgins 3363, Estaci\'on Central, Santiago, Chile}\\
  \textit{E-mail address:}  \texttt{jonathan.conejeros@usach.cl}\\

  \textsc{F\'abio Armando Tal. Instituto de Mat\'ematica e Estat\'{\i}stica, Universidade de S\~ao Paulo, Rua de Mat\~ao 1010, Cidade Universit\'aria, 05508-090 S\~ao Paulo, SP, Brazil}\\
  \textit{E-mail address:} \texttt{fabiotal@ime.usp.br}

\end{document}